\newcommand\Vol{{\operatorname{Vol}}}
\newcommand\R{{\mathbf{R}}}
\renewcommand\P{{\mathbf{P}}}
\newcommand\E{{\mathbf{E}}}
\newcommand\M{{\operatorname{M}}}
\newcommand\dist{{\operatorname{dist}}}
\newcommand\Z{{\mathbf{Z}}}
\newcommand\col{{\mathbf{c}}}
\newcommand\ep{\varepsilon}
\newcommand\eps{\varepsilon}
\newcommand\Bc{{\mathbf c}}
\newcommand\Be{{\mathbf e}}
\newcommand\Bp{{\mathbf p}}
\newcommand\Bu{{\mathbf u}}
\newcommand\Bv{{\mathbf v}}
\newcommand\Bx{{\mathbf x}}
\newcommand\By{{\mathbf y}}
\newcommand\Bz{{\mathbf z}}
\newcommand\BB{{\mathbf B}}
\newcommand\BT{{\mathbf T}}
\newcommand\CE{{\mathcal E}}
\newcommand\CF{{\mathcal F}}
\newcommand\CG{{\mathcal G}}
\newcommand\CM{{\mathcal M}}
\newcommand\CN{{\mathcal N}}
\newcommand\CP{{\mathcal P}}
\newcommand\CS{{\mathcal S}}
\newcommand\LCD{\mathbf{LCD}}
\theoremstyle{plain}
  \newtheorem{theorem}[subsection]{Theorem}
  \newtheorem{fact}[subsection]{Fact}
  \newtheorem{lemma}[subsection]{Lemma}
  \newtheorem{corollary}[subsection]{Corollary}
  \newtheorem{question}[subsection]{Question}
  \newtheorem{remark}[subsection]{Remark}
  \newtheorem{claim}[subsection]{Claim}
\theoremstyle{definition}
  \newtheorem{definition}[subsection]{Definition}
\author{Hoi H. Nguyen}
\address{Department of Mathematics, The Ohio State University, Columbus OH 43210}
\email{nguyen.1261@math.osu.edu}
\thanks{The author is supported by grants DMS-1600782, DMS-1128155,  and CCF-1412958. The most part of this note was done while the author was visiting IAS and VIASM. The author thanks the two institutions for their hospitality.}
\subjclass[2000]{15A52, 60B10}
\begin{document}

\title[Asymptotic Lyapunov exponents for large random matrices]{Asymptotic Lyapunov exponents for large random matrices}

\begin{abstract} Suppose that $A_1,\dots, A_N$ are independent random matrices whose atoms are iid copies of a random variable $\xi$ of mean zero and variance one. It is known from the works of Newman et. al. in the late 80s that when $\xi$ is gaussian then $N^{-1} \log \|A_N \dots A_1\|$ converges to a non-random limit. We extend this result to more general matrices with explicit rate of convergence. Our method relies on a simple connection between structures and dynamics.
\end{abstract}

\maketitle
\section{Introduction}
Let $A_i, i\ge 1$ be a sequence of independent identically distributed square random matrices of a given distribution $\mu$ in $\M_\R(n)$. Let $B_N$ be the matrix product 

$$B_N= A_N \dots A_1.$$

Furstenberg and Kesten \cite{FK}  (see also \cite[Theorem 4.1, p.11]{BL}) proved in 1960 that

\begin{theorem} Assume that $\E \log^+ (\|A_i\|)<\infty$ (where $\log^+ x =\max\{0,\log x\}$) then with probability one $\frac{1}{N} \log \|B_N\|$ converges to a deterministic number $\gamma$. 
\end{theorem}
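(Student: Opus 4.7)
The plan is to deduce the theorem from Kingman's subadditive ergodic theorem applied to the natural sequence of logarithms of partial products. Define, for integers $0 \le m < n$, the random variables
\[
X_{m,n} = \log \| A_n A_{n-1} \cdots A_{m+1} \|.
\]
I will verify the three hypotheses of Kingman's theorem in order, and then argue that the almost sure limit is in fact deterministic.

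First I would check \emph{subadditivity}. Since the operator norm is submultiplicative,
\[
\| A_{n+m} \cdots A_1 \| \le \| A_{n+m} \cdots A_{n+1} \| \cdot \| A_n \cdots A_1 \|,
\]
which after taking logarithms gives $X_{0,n+m} \le X_{0,n} + X_{n,n+m}$. Next, \emph{stationarity} of the array $(X_{m,n})$ under the shift $(m,n) \mapsto (m+1,n+1)$ is immediate from the fact that the $A_i$ are i.i.d.: the block $(A_{m+1},\dots,A_n)$ has the same joint law as $(A_{m+2},\dots,A_{n+1})$. For the \emph{integrability} condition, Kingman requires $\E X_{0,1}^+ < \infty$, which is exactly the hypothesis $\E \log^+ \|A_1\| < \infty$; the limit is then permitted to be $-\infty$ but in any case the almost sure convergence $\frac{1}{N} X_{0,N} \to \gamma$ holds with
\[
\gamma = \inf_{N \ge 1} \frac{1}{N} \E X_{0,N} \in [-\infty, \infty).
\]

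To see that $\gamma$ is \emph{deterministic} (rather than just a random variable measurable with respect to the tail $\sigma$-algebra), I would invoke the fact that the one-sided shift acting on the product space $(\M_\R(n)^{\BBN}, \mu^{\otimes \BBN})$ is ergodic because the $A_i$ are independent. The limiting value of a subadditive ergodic process with an ergodic base transformation is almost surely constant, by a direct application of Kolmogorov's 0--1 law to the tail event $\{ \limsup N^{-1} X_{0,N} \le t \}$. This yields $\frac{1}{N} \log \|B_N\| \to \gamma$ almost surely for a deterministic scalar $\gamma$.

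The only potentially subtle step is the integrability issue: one must allow $\gamma = -\infty$ a priori (e.g.\ if $A_1$ is singular with positive probability), and confirm that Kingman's theorem in its general form handles the case $\E X_{0,1} = -\infty$ provided $\E X_{0,1}^+ < \infty$. This is the classical extension due to Kingman and is standard; there is no genuine analytic obstacle. All the real content is in the submultiplicativity of the operator norm and the ergodicity of the i.i.d.\ shift.
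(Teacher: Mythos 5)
The paper does not prove this theorem; it simply records it as the classical Furstenberg--Kesten result and points to Bougerol--Lacroix \cite{BL}, where the proof is exactly the one you give: Kingman's subadditive ergodic theorem applied to $X_{m,n}=\log\|A_n\cdots A_{m+1}\|$, with submultiplicativity of $\|\cdot\|$ giving subadditivity, the iid structure giving stationarity under the shift, and $\E\log^+\|A_1\|<\infty$ giving the one-sided integrability condition. Your verification of the hypotheses is correct, and you are right that the possibility $\gamma=-\infty$ (relevant here because the $A_i$ need not be a.s.\ invertible) is handled by the general form of Kingman's theorem.

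The one step I would tighten is the argument for constancy of the limit. You assert that $\{\limsup_N N^{-1}X_{0,N}\le t\}$ is a tail event and invoke Kolmogorov's $0$--$1$ law, but tail-measurability is not immediate: subadditivity gives $X_{0,N}\le X_{0,k}+X_{k,N}$, hence $\limsup N^{-1}X_{0,N}\le \limsup N^{-1}X_{k,N}$ for each fixed $k$, yet there is no reverse inequality --- dropping the first $k$ factors can only raise the limsup, and without a.s.\ invertibility one cannot bound $X_{k,N}$ by $X_{0,N}$ plus something negligible, so the event need not lie in $\sigma(A_{k+1},A_{k+2},\dots)$. The clean route is the one built into Kingman's theorem itself: from $X_{0,N+1}\le X_{0,1}+X_{1,N+1}$ together with $X_{1,N+1}(\omega)=X_{0,N}(T\omega)$ (where $T$ is the shift) one gets $\gamma\le\gamma\circ T$ a.s., and since $T$ preserves $\mu^{\otimes\BBN}$ the two sides are equidistributed, forcing $\gamma=\gamma\circ T$ a.s.; thus $\gamma$ is $T$-invariant, and ergodicity of the Bernoulli shift (triviality of the invariant $\sigma$-algebra) makes it a.s.\ constant. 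This is a one-line replacement; the rest of your argument stands.
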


Here and later, if not specified, our norm is always the $\|.\|_2$ norm. The limit $\gamma$ is called the {\it top Lyapunov exponent}. If we assume the common distribution $\mu$ of the $A_i$ to be strongly irreducible (i.e. there does not exist a finite family of proper linear subspaces $V_1,\dots,V_k$ of $\R^n$ such that $M_\mu(V_1\cup \dots\cup V_k )=V_1 \cup \dots \cup V_k$, where $M_\mu$ is the smallest closed subgroup which contains the support of $\mu$), then Furstenberg showed in \cite{F} (see also \cite[Corollary 3.4, p.53]{BL}) that 

\begin{theorem}[Furstenberg's theorem]\label{theorem:F} Assume that $\E \log^+ (\|A_i\|)<\infty$ and that $\mu$ is strongly irreducible, then
\begin{itemize}
\item $\lim_{N\to \infty} \frac{1}{N} \log \|B_N \Bx\| = \gamma$ uniformly on $\Bx \in S^{n-1}$;
\vskip .1in
\item for any $\mu$-invariant distribution $\nu$ on $\P(\R^n)$ (i.e. $\nu(A) = \int \int 1_A(M \bar{\Bx}) d\mu(M) d\nu (\bar{\Bx})$) we have
$$\gamma = \int \int \log \frac{\|M\Bx\|}{\|B_N \Bx\|} d \mu(M) d\nu(\bar{\Bx}).$$
\end{itemize}
\end{theorem}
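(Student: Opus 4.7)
The approach I would take is classical and proceeds through the induced dynamics on projective space $\P(\R^n)$, which is compact. The first step is to establish the existence of at least one $\mu$-invariant probability measure $\nu$ on $\P(\R^n)$: the averaging operator $\nu\mapsto \int M_*\nu\,d\mu(M)$ is a continuous self-map of the weak-$*$ compact convex set of Borel probability measures on $\P(\R^n)$, so Krylov--Bogoliubov (or the Schauder fixed-point theorem) produces such a $\nu$. This also guarantees that the hypothesis in part (ii) is not vacuous.

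With $\nu$ in hand, I would exploit the multiplicative cocycle structure via the function $\phi(M,\bar\Bx):=\log(\|M\Bx\|/\|\Bx\|)$, which is well-defined on $\M_\R(n)\times \P(\R^n)$ and satisfies the telescoping identity
\[
\log\frac{\|B_N\Bx\|}{\|\Bx\|}=\sum_{k=0}^{N-1}\phi\bigl(A_{k+1},\overline{B_k\Bx}\bigr).
\]
On the skew-product system $T(\omega,\bar\Bx)=(\sigma\omega,\overline{A_1(\omega)\Bx})$ on $\M_\R(n)^{\BBN}\times \P(\R^n)$ with invariant measure $\mu^{\otimes\BBN}\otimes \nu$, Birkhoff's ergodic theorem yields almost-sure convergence of $N^{-1}\log\|B_N\Bx\|$ to $\iint\phi\,d\mu\,d\nu$ for $\nu$-a.e.\ $\bar\Bx$ (integrability of $\phi$ follows from $\E\log^+\|A_i\|<\infty$ together with a matching lower bound coming from invertibility). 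A squeezing argument using $\log\|B_N\Bx\|\le\log\|B_N\|+\log\|\Bx\|$ combined with the Furstenberg--Kesten theorem then identifies this integral with $\gamma$, which gives the formula in the second bullet (after correcting the obvious typo $\|B_N\Bx\|\mapsto\|\Bx\|$ in the denominator).

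The main obstacle is promoting almost-sure convergence on a $\nu$-full set to the uniform convergence on all of $S^{n-1}$ required by the first bullet, and here strong irreducibility is decisive. The key intermediate fact is Furstenberg's \emph{properness lemma}: under strong irreducibility, every $\mu$-invariant measure on $\P(\R^n)$ assigns zero mass to every proper linear subspace, since otherwise the subspaces of maximal $\nu$-mass form a finite $M_\mu$-invariant family, contradicting strong irreducibility. Properness rules out any ``slow'' direction being trapped in a non-generic subspace, forcing the limit of $N^{-1}\log\|B_N\Bx\|$ to equal the same constant $\gamma$ for \emph{every} starting $\bar\Bx$ (not just $\nu$-a.e.), and simultaneously it forces $\gamma_\nu=\gamma$ to be independent of the choice of invariant $\nu$. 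Finally, a compactness/equicontinuity argument on $\P(\R^n)$ --- the maps $\bar\Bx\mapsto N^{-1}\E\log\|B_N\Bx\|$ have uniformly controlled oscillation in $\bar\Bx$ because $|\log\|M\Bx\|-\log\|M\By\||$ is controlled by the projective distance $d(\bar\Bx,\bar\By)$ times $\|M\|\|M^{-1}\|$ --- upgrades the pointwise convergence to the uniform convergence claimed in (i). The properness step and the uniform control of the cocycle on projective space are the two subtle points; the rest is ergodic-theoretic bookkeeping.
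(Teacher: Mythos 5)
This theorem is quoted in the paper as classical background (cited to Furstenberg \cite{F} and to \cite[Corollary 3.4]{BL}); the paper contains no proof of it, so your sketch can only be measured against the standard argument, whose skeleton you have largely reproduced: existence of a stationary measure by Krylov--Bogoliubov, the cocycle identity $\log\|B_N\Bx\|=\sum_{k<N}\phi(A_{k+1},\overline{B_k\Bx})$, Furstenberg's properness lemma (every $\mu$-invariant measure gives zero mass to proper subspaces under strong irreducibility), and the resulting identification of $\iint\phi\,d\mu\,d\nu$ with $\gamma$ for every invariant $\nu$. Two smaller points need repair but are fixable: Birkhoff applied to $\mu^{\otimes\mathbf{N}}\otimes\nu$ gives the constant $\iint\phi\,d\mu\,d\nu$ only if that skew-product measure is ergodic, so you must either pass to an ergodic decomposition or argue that the a.e.\ limit is constant; and the ``squeezing'' against the Furstenberg--Kesten limit only yields an inequality $\le\gamma$ --- the reverse inequality is exactly where the properness lemma has to be used again, which you assert but do not carry out.

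The genuine gap is in your uniformity step. The oscillation bound $|\log\|M\Bx\|-\log\|M\By\||\le d(\bar\Bx,\bar\By)\,\|M\|\,\|M^{-1}\|$ applied to $M=B_N$ is useless for uniform convergence: $\frac1N\log\bigl(\|B_N\|\,\|B_N^{-1}\|\bigr)$ converges to $\gamma_1-\gamma_n$, which is in general strictly positive, so the oscillation over a projective ball of fixed radius does not go to zero after dividing by $N$; shrinking the mesh exponentially would require a union bound over exponentially many net points, for which you have no large-deviation estimate under the mere integrability hypothesis $\E\log^+\|A_i\|<\infty$. The classical proof of the first bullet is structurally different: assuming uniform convergence fails, one takes unit vectors $\Bx_N$ witnessing the failure, forms the occupation (empirical) measures of the projective trajectory $\overline{B_k\Bx_N}$, extracts a weak-$*$ limit, checks that any such limit is $\mu$-invariant almost surely, and then invokes the second bullet --- the fact that \emph{every} invariant measure yields the same integral $\gamma$ --- to reach a contradiction. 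In other words, bullet two is an input to bullet one, not a parallel statement, and an equicontinuity argument cannot substitute for it. (A further minor caveat: your lower bound on $\phi$ via $\log\|M^{-1}\|$ presumes invertible matrices, which is the setting of \cite{BL} but is not stated in the theorem as quoted; the integrability of $\phi^-$ against $\mu\otimes\nu$ should instead be extracted from the properness of $\nu$.)
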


There are also important extensions when $M_\mu$ is replaced by $T_\mu$, the smallest closed semi-group which contains the support of $\mu$; and when strongly irreducibility is reduced to irreducibility, see for instance \cite{BL,F,GM}.


We next introduce other Lyapunov exponents by the use of exterior powers $\wedge^k$. 

\begin{definition} Assume that  $\E \log^+ (\|A_i\|)<\infty$. The Lyapunov exponents $\gamma_1,\dots, \gamma_n$ associated to $A_i$ are defined inductively by $\gamma_1=\gamma$ and for $k\ge 2$,

$$\sum_{i=1}^k \gamma_i = \lim_{N \to \infty} \frac{1}{N} \E \log \|\wedge^k B_N\|.$$

\end{definition}

In \cite{O} (see also \cite[Theorem 1.2]{GM}), Oseledec showed the following extremely powerful theorem on the convergence of Lyapunov exponents.  

\begin{theorem}[Oseledec's multiple ergodic theorem]\label{theorem:O} Assume that  $\E \log^+ (\|A_i\|)<\infty$, then the followings hold.
\begin{itemize} 
\item With probability one,

\begin{equation}\label{eqn:def:L}
\gamma_k = \lim_{N \to \infty} \frac{1}{N} \E \log  \sigma_k(B_N),
\end{equation}

where $\sigma_1(B_N)\ge \dots \ge \sigma_n(B_N)$ are the singular values of $B_N$.
\vskip .1in

\item With probability one, the matrix limit $(B_N B_N^T)^{1/2N}$ converges to a matrix $M \in M_\R(n)$ whose eigenvalues coincide with $\exp(\gamma_i)$ counting  multiplicities.
\vskip .1in
\item Let $\exp(\alpha_1(M))<\dots < \exp(\alpha_k(M))$ denote the different eigenvalues of $M$ with multiplicities $n_1(M),\dots, n_k(M)$, and let $U_1,\dots, U_k$ be the corresponding eigensubsapces, and set $V_i = U_1 \oplus \dots \oplus U_i$. Then the pair $(\alpha_i(M), n_i(M))$ is $\mu$-invariant, and for any unit vector $\Bx\in V_i \backslash V_{i-1}$, with probability one

$$\lim_{N\to \infty} \frac{1}{N} \log \|B_N \Bx\| = \alpha_i.$$
\end{itemize}
\end{theorem}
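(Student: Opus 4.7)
The plan is to proceed in three stages: first identify the Lyapunov exponents with almost-sure singular-value growth rates via Furstenberg--Kesten applied to exterior-power cocycles; second upgrade this to convergence of the full matrix $(B_N B_N^T)^{1/(2N)}$ together with its eigenspaces; third read off the growth of $\|B_N\Bx\|$ from that matrix limit. For the first stage, use the identity $\|\wedge^k A\| = \sigma_1(A)\cdots\sigma_k(A)$ to rewrite
\[
\log\|\wedge^k B_N\| \;=\; \sum_{i=1}^k \log\sigma_i(B_N).
\]
The sequence $\|\wedge^k B_N\|$ is sub-multiplicative in $N$, and the factors $\wedge^k A_i$ are iid with $\E\log^+\|\wedge^k A_i\|\le k\,\E\log^+\|A_i\|<\infty$. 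Hence Theorem~1.1 applies to the exterior-power cocycle and yields almost-sure convergence of $\frac{1}{N}\log\|\wedge^k B_N\|$ to a deterministic constant, which by definition equals $\sum_{i=1}^k \gamma_i$. Taking successive differences gives \eqref{eqn:def:L}.

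For the second stage, write the SVD $B_N = U_N D_N V_N^T$, so that $(B_N B_N^T)^{1/(2N)} = U_N D_N^{1/N} U_N^T$ with $D_N^{1/N}\to\diag(e^{\gamma_1},\dots,e^{\gamma_n})$ almost surely by the first stage. For each $j$ with a strict gap $\gamma_{n_1+\cdots+n_j}>\gamma_{n_1+\cdots+n_j+1}$, consider the flag $V_j^{(N)}$ spanned by the first $n_1+\cdots+n_j$ columns of $U_N$. The key estimate is that $V_j^{(N+M)}$ and $V_j^{(N)}$ differ on the Grassmannian $\Gr_{n_1+\cdots+n_j}(\R^n)$ by at most $\exp\!\bigl(-N(\gamma_{n_1+\cdots+n_j}-\gamma_{n_1+\cdots+n_j+1}) + o(N)\bigr)$, uniformly in $M$; this follows from writing $B_{N+M} = (A_{N+M}\cdots A_{N+1})B_N$ and bounding the cross-block entries of $B_{N+M}B_{N+M}^T$ relative to its dominant diagonal block, the cross-block bounds being supplied by the exterior-power rates of the first stage. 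A Borel--Cantelli argument then promotes this to almost-sure convergence of the flags, and extracting orthogonal complements between consecutive limit flags produces the eigenspaces $U_i$ of the claimed limit matrix $M$, with eigenvalues $e^{\alpha_i}$ of multiplicity $n_i$.

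For the third bullet, fix a unit $\Bx\in V_i\setminus V_{i-1}$ and decompose it in the eigenbasis of $M$. Then $\|B_N\Bx\|^2 = \Bx^T (B_N B_N^T) \Bx$, and since $(B_N B_N^T)^{1/(2N)}\to M$, the leading exponential rate of $\|B_N\Bx\|^2$ is twice the largest $\alpha_j$ for which $\Bx$ has nonzero component in $U_j$; by $\Bx\notin V_{i-1}$ this value is $\alpha_i$, giving $\frac{1}{N}\log\|B_N\Bx\|\to\alpha_i$ almost surely. The $\mu$-invariance of $(\alpha_i,n_i)$ follows from shift-invariance of the iid sequence. The main obstacle is the flag convergence in the second stage: Furstenberg--Kesten delivers convergence of \emph{lengths} $\sigma_k(B_N)^{1/N}$ but says nothing about \emph{directions}. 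The extra ingredient is a quantitative contraction on Grassmannians driven by the spectral gaps $\gamma_{n_1+\cdots+n_j}-\gamma_{n_1+\cdots+n_j+1}>0$; the required cross-block bounds are themselves consequences of the exterior-power rates from the first stage, so the whole argument bootstraps neatly from Furstenberg--Kesten.
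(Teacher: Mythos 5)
The paper itself offers no proof of this statement: it is quoted as a classical theorem (Oseledec; see also Goldsheid--Margulis), so your proposal can only be measured against the standard argument, which is indeed the route you outline (Furstenberg--Kesten applied to the exterior-power cocycles to get the singular-value rates, then stabilization of singular flags via the spectral gaps, then reading off $\frac1N\log\|B_N\Bx\|$). Your Stage 1 is correct and is exactly the standard first step.

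The genuine gap is in Stages 2--3: you run the flag-stabilization argument on the wrong flags. Writing $B_N=U_ND_NV_N^T$, the subspaces that stabilize almost surely are the flags spanned by columns of $V_N$, i.e.\ the eigenflags of $B_N^TB_N$, not the flags spanned by columns of $U_N$ that you consider. Your key estimate --- that the $U_N$-flags $V_j^{(N)}$ and $V_j^{(N+M)}$ are $\exp(-N(\gamma_{n_1+\cdots+n_j}-\gamma_{n_1+\cdots+n_j+1})+o(N))$-close --- is false: since $B_{N+M}=(A_{N+M}\cdots A_{N+1})B_N$, when $B_N$ has a large singular gap one has $B_N\approx\sigma_1(B_N)\,\Bu_1\Bv_1^T$, so the top left singular direction of $B_{N+1}$ is essentially the direction of $A_{N+1}\Bu_1$, which differs from $\Bu_1$ by an order-one random amount at every step. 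The left flags converge in distribution (to a stationary measure), not almost surely, and consequently $(B_NB_N^T)^{1/2N}$ itself need not converge a.s.\ once the exponents are distinct; the classical statement (with $B_N=A_N\cdots A_1$) concerns $(B_N^TB_N)^{1/2N}$, and that transpose is precisely what your argument needs. The same confusion reappears in Stage 3: $\|B_N\Bx\|^2=\Bx^TB_N^TB_N\Bx$, not $\Bx^TB_NB_N^T\Bx$, so the filtration $V_i$ must be built from eigenspaces of the limit of $(B_N^TB_N)^{1/2N}$. The repair is standard but essential: run the contraction/Borel--Cantelli argument on the right singular flags, where the comparison of $B_N$ and $B_{N+1}=A_{N+1}B_N$ together with the a.s.\ bound $\frac1N\log\|A_{N+1}\|\to 0$ (Borel--Cantelli from $\E\log^+\|A_1\|<\infty$, an ingredient you should state explicitly) yields the exponential closeness of consecutive flags; with that change the rest of your outline goes through.
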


In practice, the issues when the top exponent $\gamma_1$ is strictly positive or when all of the Lyapunov exponents are distinct are extremely important.  We refer the reader to \cite{GM} for further discussion on these accounts.

Following the two celebrated results of Furstenberg and Oseledec above, for some {\it nice} distribution $\mu$ it is also natural to ask

\begin{question}\label{question} Can we give
\begin{enumerate}[(i)]
\item  (invariant measure) fine approximation for the Lyapunov's exponents? 
\vskip .05in
\item (large deviation type) quantification of the rate of convergence ?
\end{enumerate}
\end{question}

These aspects have been widely studied by many researchers, especially for unimodular and/or symplectic matrices of fixed size in connection to the theory of Schr\"odinger operators. For a thorough introduction to these topics, we refer the reader to the books by Figotin and Pastur \cite{FP} and by Bourgain \cite{B}. For the sake of completeness, allow us to insert here a large deviation type result for the shift model from \cite{B} (see also \cite{BS} and \cite{GS}).

\begin{theorem} Assume that $\omega$ is an element of the one dimensional torus $\BT$ such that 

$$\dist(k \omega, \Z^2) > c \frac{1}{|k| \log^3(1+|k|)} \mbox{ for all } k \in \Z/\{0\}.$$
Let $E$ be a fixed parameter and let $f$ be a real analytic function on $\BT$. Let $\Bx$ be sampled uniformly at random from $\BT$, and consider the random matrix product 

$$B_N = \prod_{j=1}^N \begin{pmatrix} f(\Bx+ j \omega)-E & -1 \\ 1 & 0 \end{pmatrix}.$$ 

Then for $t > N^{-1/10}$

$$\P_{\Bx} \Big(\big|\frac{1}{N} \log \|B_N\| - \frac{1}{N}\E \log \|B_N\|\big| > t\Big) < C e^{-ct^2N},$$

for some absolute constants $C$ and $c$.

\end{theorem}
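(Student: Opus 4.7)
The plan is to exploit the real analyticity of $f$ to recognize $u_N(\Bx):=\frac{1}{N}\log\|B_N(\Bx)\|$ as a bounded subharmonic function after complexifying $\Bx$, and then combine subharmonicity with the Diophantine condition on $\omega$. Since $f$ extends holomorphically to a strip $\{|\Im z|<\rho\}$, so does each entry of $A(z)$, hence $B_N(z)$ is matrix-valued holomorphic and $u_N(z)$ is subharmonic and uniformly bounded by a constant depending only on $\|f\|_\infty$ and $E$. The Riesz representation for bounded subharmonic functions then yields Fourier decay $|\hat u_N(k)|\le C/|k|$ uniformly in $N$, which is the key quantitative input.

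Next I would exploit the cocycle identity $A(\Bx+(N+1)\omega)B_N(\Bx)=B_N(\Bx+\omega)A(\Bx+\omega)=B_{N+1}(\Bx)$, which gives $|u_N(\Bx+\omega)-u_N(\Bx)|\le C/N$ pointwise and therefore
$$u_N(\Bx)=\frac{1}{M}\sum_{j=0}^{M-1}u_N(\Bx+j\omega)+O(M/N)$$
for any $M\le N$. Splitting $u_N=u_N^K+r_N^K$ with $u_N^K$ the Fourier truncation at level $K$ and $\|r_N^K\|_{L^2}\le C/\sqrt{K}$, the Diophantine hypothesis $\|k\omega\|\ge c/(|k|\log^3|k|)$ yields a purely deterministic bound
$$\Bigl|\frac{1}{M}\sum_{j=0}^{M-1}u_N^K(\Bx+j\omega)-\E u_N\Bigr|\le\sum_{0<|k|\le K}\frac{|\hat u_N(k)|}{M\|k\omega\|}\le\frac{CK\log^4K}{M},$$
while applying the same Weyl bound to frequencies $|k|>K$ controls the $L^2$ norm of the shift-average of $r_N^K$ by roughly $\log^{3/2}M/\sqrt{M}$.

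Finally, to upgrade these $L^2$/deterministic estimates to the sub-Gaussian tail I would invoke Bourgain's large deviation theorem for subharmonic functions on $\BT$ (as developed in \cite{B,GS}): a bounded subharmonic function obeys a John--Nirenberg-type bound $|\{|v-\E v|>t\}|\le C\exp(-ct/\|v\|_{BMO})$, with the BMO norm controlled through the Riesz measure by the $L^1$ norm. Applying this to $u_N-\E u_N$, or more precisely to the pieces remaining after the shift-averaging reduction, and balancing $M\asymp tN$ against $K$ so that the three errors above are all $\ll t$, produces the bound $C\exp(-ct^2N)$ precisely in the regime $t>N^{-1/10}$; the exponent $1/10$ emerges from the three-way competition between the Fourier tail $1/\sqrt{K}$, the Weyl-sum error $K\log^4 K/M$, and the shift defect $M/N$. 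The main obstacle is this last step: Chebyshev/Markov alone gives only polynomial tails, so one must pass through the BMO/Riesz theory of subharmonic functions, which is the technical heart of the argument; the algebraic and Fourier-analytic reductions that precede it are comparatively soft.
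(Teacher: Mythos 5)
The first thing to note is that the paper does not prove this theorem at all: it is quoted verbatim as background, with the proof deferred to Bourgain's book \cite{B} and to \cite{BS}, \cite{GS}. So there is no "paper proof" to compare against; the relevant comparison is with those sources, and your outline is in fact a faithful reconstruction of their strategy: complexify to get a uniformly bounded subharmonic $u_N$, extract the Fourier decay $|\hat u_N(k)|\le C/|k|$ from the Riesz representation, use the cocycle almost-invariance $|u_N(\Bx+\omega)-u_N(\Bx)|\le C/N$ to replace $u_N$ by an average over $M$ shifts, and control that average by splitting at frequency $K$ and using the Diophantine condition in the Weyl sums. (Two small points you use implicitly: unimodularity of the one-step matrices gives $\|B_N\|\ge 1$, hence the lower bound $u_N\ge 0$ needed for the Riesz/Fourier lemma, and boundedness of $A^{-1}$ needed for the almost-invariance estimate.)

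The genuine gap is in the last step, which you yourself identify as the technical heart but then only assert. A plain John--Nirenberg bound $|\{|v-\E v|>t\}|\le C\exp(-ct/\|v\|_{BMO})$ applied to the shift-average, with the BMO norm bounded by the quantities you exhibit (roughly $K\log^4K/M+K^{-1/2}$), does not produce $\exp(-ct^2N)$: optimizing $K$ gives a BMO norm of size about $M^{-1/3}$ (up to logarithms), hence a tail like $\exp(-ctM^{1/3})$, which with $M\asymp tN$ is far weaker than the claimed sub-Gaussian bound, and the threshold $t>N^{-1/10}$ does not "emerge" from the three-way balance you describe. In \cite{GS} and in the corresponding chapter of \cite{B} the sub-Gaussian form requires a genuinely sharper mechanism than the naive BMO estimate (a more careful $L^2$-based treatment of the averaged high-frequency part and of the exceptional set where $\|k\omega\|$ is small, carried out so that the exponent scales like $tM\asymp t^2N$ rather than like $t$ times a small power of $M$). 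As written, your proposal would need that quantitative step supplied; without it the argument only yields a large deviation estimate with a much weaker exponent.
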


One can use result of this type to study the decay of the corresponding Green function.

\subsection{The iid model with large dimension} Our main focus is on a simple model of random matrices of very {\it large} dimension which are not necessarily unimodular. Especially, we will consider those $A_i$ random matrices where the entries are iid copies of a common real random variable $\xi$ of mean zero and variance $1/n$. This ensemble had been considered by Cohen, Isopi and Newman in the  80s \cite{CN,IN,N} in connection to May's proposal of a specific quantitative relationship between complexity and stability within certain ecological models. We cite here a result by Newman which might best suit our discussion.

\begin{theorem}\cite[Equation (6)]{N}\label{theorem:IN} Assume that the entries of $A_i$ are iid copies of $\frac{1}{\sqrt{n}}N(0,1)$. Let $\mu_1\ge \dots \ge \mu_n$ be the Lyapunov's exponents of the matrix product $B_N$. Then 
$$\mu_i = \frac{1}{2} (\log 2 + \Psi(\frac{n-i+1}{2}) -\log n),$$

where $\Psi(d)=\Gamma'(d)/\Gamma(d)$ is the digamma function.
\end{theorem}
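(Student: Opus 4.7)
The key structural feature is that the iid Gaussian ensemble $\mu$ is $O(n)$-bi-invariant: $O_1 A O_2 \stackrel{d}{=} A$ for any $O_1,O_2\in O(n)$. This invariance collapses everything into explicit Gaussian integrals via Furstenberg's formula.

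First I would handle the top exponent $\mu_1$. Right-$O(n)$-invariance of $\mu$ implies that $Av/\|Av\|$ is uniform on $S^{n-1}$ for every fixed unit $v$, so the uniform measure $\nu$ on $S^{n-1}$ is $\mu$-stationary; strong irreducibility is automatic since $\mu$ has a density on $\M_\R(n)$. Theorem \ref{theorem:F} then yields
$$\mu_1 \;=\; \int \E \log\|A x\|\,d\nu(x) \;=\; \E\log\|A e_1\|.$$
As the entries of $A$ are $N(0,1/n)$, $\|A e_1\|^2$ has the law of $\chi^2_n/n$, and the classical identity $\E\log\chi^2_k=\Psi(k/2)+\log 2$ gives $\mu_1=\tfrac12(\Psi(n/2)+\log 2-\log n)$, matching the claim at $i=1$.

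To capture all the $\mu_i$ simultaneously I would apply the same recipe to the induced map $\wedge^k A$ on $\wedge^k\R^n$, whose top Lyapunov exponent equals $\mu_1+\cdots+\mu_k$. By $O(n)$-invariance, the uniform measure on the Grassmannian of decomposable $k$-vectors is stationary, and taking $e_1\wedge\cdots\wedge e_k$ as a representative gives
$$\mu_1+\cdots+\mu_k \;=\; \E\log\|\wedge^k A\,(e_1\wedge\cdots\wedge e_k)\| \;=\; \tfrac12\,\E\log\det(A_k^T A_k),$$
where $A_k$ is the $n\times k$ matrix formed by the first $k$ columns of $A$. By the Bartlett decomposition of the Wishart matrix $n A_k^T A_k$, its determinant factors as a product of independent $\chi^2_{n-i+1}$ for $i=1,\dots,k$, so
$$\mu_1+\cdots+\mu_k \;=\; \tfrac12\sum_{i=1}^k\bigl(\Psi((n-i+1)/2)+\log 2-\log n\bigr),$$
and subtracting consecutive partial sums yields the stated formula for each $\mu_i$.

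The one technical wrinkle is that Theorem \ref{theorem:F} as quoted concerns the action on $\P(\R^n)$, whereas the computation of $\mu_1+\cdots+\mu_k$ implicitly invokes its analogue on the Grassmannian. This extension is classical (see e.g.~\cite{BL}) and should be cited rather than reproved. An alternative that keeps everything inside $\R^n$ is iterated QR decomposition of $B_N$: orthogonal invariance forces the successive factors $R_k$ defined by $A_kQ_{k-1}=Q_kR_k$ to be iid upper triangular with mutually independent diagonal entries $(R^{(k)}_{ii})^2\sim\chi^2_{n-i+1}/n$, after which one must verify that the singular values of $R_N\cdots R_1$ grow at the rates $\E\log|R_{ii}|$ dictated by the diagonals. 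This bypasses the Grassmannian form of Furstenberg but transfers the effort into a majorization/invariant-subspace argument for products of triangular matrices.
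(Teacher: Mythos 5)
There is nothing in the paper to compare against here: Theorem \ref{theorem:IN} is not proved in this note, it is quoted verbatim from Newman \cite[Equation (6)]{N} (with the non-Gaussian extension in \cite{IN}), so your proposal can only be judged on its own merits. On those merits it is essentially correct, and it is in fact the classical route: exploit bi-invariance of the Gaussian ensemble, reduce the partial sums $\mu_1+\dots+\mu_k$ to $\tfrac12\E\log\det(A_k^TA_k)$ via the exterior power, and finish with the Bartlett factorization $\det(nA_k^TA_k)\stackrel{d}{=}\prod_{i=1}^k\chi^2_{n-i+1}$ together with $\E\log\chi^2_d=\log 2+\Psi(d/2)$; differencing the partial sums gives the stated formula, including the $-\log n$ from the $1/\sqrt n$ scaling. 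Two small points. First, the uniformity of $A\Bv/\|A\Bv\|$ on $S^{n-1}$ for fixed unit $\Bv$ comes from \emph{left} $O(n)$-invariance ($OA\stackrel{d}{=}A$), while right-invariance is what makes $\E\log\|A\Bv\|$ independent of $\Bv$; since the ensemble is bi-invariant this is harmless, but the roles should not be conflated. Second, the ``wrinkle'' you flag about needing a Grassmannian version of Furstenberg's theorem can be dispatched more cheaply than by a separate citation: apply Theorem \ref{theorem:F} directly to the iid matrices $\wedge^kA_i$ acting on $\wedge^k\R^n$, noting that the exterior-power representation of the full linear group is strongly irreducible (the support of $\wedge^k\mu$ is all of the image of $\M_\R(n)$), that $\E\log^+\|\wedge^kA_i\|<\infty$, and that the rotation-invariant measure on the Pl\"ucker-embedded Grassmannian is $\wedge^k\mu$-stationary; since under strong irreducibility the formula holds for \emph{any} stationary measure, evaluating at $\Be_1\wedge\dots\wedge\Be_k$ is legitimate. (One also silently identifies the a.s.\ limit in Theorem \ref{theorem:F} with the expectation used in the paper's definition of the $\gamma_k$; this is justified by Furstenberg--Kesten plus the integrability of $\log\|\wedge^k A\|$ for Gaussian matrices.) Your alternative QR/triangular-product sketch is also the route closest to Newman's original argument, but as you note it shifts the work into controlling the off-diagonal growth of products of random triangular matrices, so the exterior-power computation is the cleaner way to write this up.
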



This result was also generalized in \cite{IN} to $\xi$ having bounded density and $\E ((\sqrt{n} \xi)^4)<\infty$. We also refer the reader to a more recent result by Forester \cite{Forester} and the survey \cite{AI} by Akerman and Ispen for more references. These results address the first part of Question \ref{question} for various random matrices of {\it invariance} type. 

For the large deviation part of Question \ref{question}, the only result we found for the iid model is due to Kargin \cite{K} who considered the rate of convergence of the top exponents. 

\begin{theorem}\cite[Proposition 3]{K}\label{theorem:K}  Let $\eps>0$ be given. Assume that the entries of $A_i$ are iid copies of $\frac{1}{\sqrt{n}}N(0,1)$. Then for all sufficiently small $t$, and all $n\ge n_0(t)$ and $N\ge 1$

$$\P(|\frac{1}{N} \log \|B_N\|| >t + \eps/N) \le 2 (1+2/\eps)^n \exp(-\frac{1}{8} Nn t^2).$$
\end{theorem}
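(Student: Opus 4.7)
My plan is to reduce the operator-norm bound to a per-vector estimate by an $\eps$-net argument on $S^{n-1}$, and then exploit the rotational invariance of the Gaussian ensemble to rewrite $\|B_N x\|^2$, for each fixed unit vector $x$, as an iid product of chi-squared variables to which a one-dimensional Chernoff bound applies. Concretely, I would split $\P(|\frac{1}{N}\log\|B_N\||>t+\eps/N)$ into its upper and lower tails. For the upper tail I pick an $\eps$-net $\CN\subset S^{n-1}$ of cardinality at most $(1+2/\eps)^n$; the standard estimate $\|B_N\|\le (1-\eps)^{-1}\max_{y\in\CN}\|B_N y\|$ (with $-\log(1-\eps)$ absorbed into the $\eps$-slack on the left) combined with a union bound yields
$$\P(\log\|B_N\|>Nt+\eps) \;\le\; (1+2/\eps)^n \sup_{y\in\CN} \P(\log\|B_N y\|>Nt).$$
For the lower tail no net is needed, since $\|B_N\|\ge\|B_N x_0\|$ for any fixed $x_0\in S^{n-1}$; this single additional term accounts for the factor $2$ in $2(1+2/\eps)^n$.

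\textbf{Per-vector estimate via chi-squared products.} Fix a unit vector $y$. Since $A_1$ has iid $\frac{1}{\sqrt n}N(0,1)$ entries, $A_1 y$ is distributed as $r\Bu$ where $\Bu$ is uniform on $S^{n-1}$ and $r^2\sim \frac{1}{n}\chi_n^2$ is independent of $\Bu$. Iterating this observation across the $N$ matrices gives
$$\|B_N y\|^2 \stackrel{d}{=} \prod_{i=1}^N X_i,\qquad X_i\text{ iid }\sim \tfrac{1}{n}\chi_n^2,$$
so $2\log\|B_N y\|$ is an iid sum of $N$ copies of $\log(\chi_n^2/n)$. From the explicit moment generating function $\E X_i^\lambda=(2/n)^\lambda\,\Gamma(n/2+\lambda)/\Gamma(n/2)$ one checks that $\log X_i$ is sub-Gaussian with variance proxy of order $1/n$, which is consistent with $\Var \log X_i=\Psi'(n/2)=O(1/n)$. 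Optimizing the Chernoff exponent then produces a tail bound of the form
$$\P\!\Big(\Big|\sum_{i=1}^N(\log X_i-\E\log X_i)\Big|>2Nt\Big) \;\le\; 2\exp\!\Big(-\tfrac{1}{8}Nnt^2\Big);$$
since $\E\log X_i\le 0$ by Jensen, combining with the net bound yields the claimed estimate.

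\textbf{Main obstacle.} The only delicate ingredient is the sub-Gaussian MGF bound for $\log(\chi_n^2/n)$ with a clean enough constant to produce the factor $\frac{1}{8}$ in the exponent. While its variance is clearly of order $1/n$ via the trigamma function, controlling the MGF uniformly in $|\lambda|$ up to order $n$ (the regime the Chernoff optimization forces on us) requires either a careful Taylor expansion of $\log\Gamma(n/2+\lambda)-\log\Gamma(n/2)$ with a controlled remainder, or a direct convexity argument on $\log\Gamma$. The other ingredients---the $\eps$-net, the rotational-invariance reduction, the one-variable Chernoff, and the final union bound---are all standard.
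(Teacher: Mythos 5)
Your proposal matches the paper's own treatment. The theorem is cited from Kargin, and the remark after it describes exactly the deduction you give: the per-vector bound $\P(|\frac1N\log\|B_N\Bx\||>t)\le\exp(-\frac18 Nnt^2)$ from Kargin's Proposition 3 (which you correctly reconstruct via rotational invariance of the Gaussian ensemble, reducing $\|B_N y\|^2$ to an iid product of $\chi_n^2/n$ variables so that a one-dimensional Chernoff bound applies) combined with the $\eps$-net argument of Claim \ref{claim:netpassing}. One small caution on the bookkeeping: the sign $\E\log X_i\le 0$ from Jensen favors the upper tail but works \emph{against} the lower tail (the centering, of size $\Theta(1/n)$ per factor, shifts the lower threshold toward zero rather than away), so the lower-tail estimate only closes once $|\E\log X_i|$ is negligible compared with $t$ — this is what the hypothesis $n\ge n_0(t)$ is for, and your write-up should invoke it explicitly; likewise $-\log(1-\eps)>\eps$, so the net radius must be taken slightly smaller than $\eps$ for the correction to fit inside the $\eps/N$ slack.
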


\begin{remark}
To be more precise, Proposition 3 of \cite{K} shows that  $\P(|\frac{1}{N} \log \|B_N\Bx\|| >t ) \le \exp(-\frac{1}{8} Nn t^2)$ for any fixed $\Bx\in S^{n-1}$, from which one can deduce Theorem \ref{theorem:K} by an $\eps$-net argument, see for instance Claim \ref{claim:netpassing}. 
\end{remark}

\subsection{Our results} As far as we are concerned, all of the results in the literature with respect to this iid model assumed the common distribution $\xi$ to be sufficiently  smooth (i.e. at least the density function exists and is bounded), so that  $\frac{1}{N}\log \|B_N\|$ with $N\to \infty$ is well defined almost surely. 

The smoothness assumption is natural, as if $A_i$ were singular with positive probability, then our chain $B_N$ would become singular with probability one; in this case it might still be reasonable to study the top Lyapunov exponent, but not other exponents. However, and this is important to our study, {\it would it still be useful to study the "Lyapunov exponents" when $N$ does not necessarily tend to infinity?} In other words, even when the exponents are not well defined, can we still say useful things about the growth of the chain $B_N$ for some {\it effective} range of $N$? Our motivation is  based on the following two facts:

\begin{enumerate}[(i)]
\item in many practical problems, it is not known a priori that our random matrix model is smooth;
\vskip .05in
\item to estimate the Lyapunov's exponents using computer, it  actually computes $\frac{1}{N}\log \sigma_i(B_N)$ for some sufficiently large (but not too large) $N$.
\end{enumerate}

Trying to address these questions, and with a universality approach in mind, we will consider the matrix models  $A_i$ where the entries of $\sqrt{n} A_i$ are iid copies of a random variable $\xi$ of mean zero, variance one, and that there exists a parameter $K$ such that for all $t$

\begin{equation}\label{eqn:subgaussian}
\P(|\xi| \ge t)= O(\exp(-t^2/K)).
\end{equation}

One representative example of our matrices are Bernoulli ensembles, where $\xi$ takes value $\pm 1$ with probability 1/2. As addressed above, there are two main obstacles for this discrete model: first the matrix law is not rotational invariant; and second, with probability one, the product matrix $B_N$ will be the {\it zero matrix} as $N \to \infty$ (for instance when $n$ is even then with positive probability the tries of $A_1$ are all 1s and each row of $A_2$ has exactly $n/2$ entries 1s).

The first problem is not strictly impossible, as there have been major developments in recent years showing that the spectral behavior of the iid matrices is universal. The second problem is, on the other hand, more subtle. This forces us to put an upper bound on $N$. The main question then boils down to finding a fine range of $N$ for which one can still achieve non-trivial estimates.

In this note we show that as long as $N$ grows slower than exponential in $n$, one can have relatively good control on the exponents. 

\begin{theorem}[Main results]\label{theorem:main} Let $\ep>0$ be given. Let $A_1,A_2,\dots$ be independent matrices whose entries are iid copies of $\frac{1}{\sqrt{n}}\xi$ with  $\xi$ from \eqref{eqn:subgaussian}. Then the followings hold.

\begin{enumerate}[(1)] 
\item (top exponent) For any $ t\ge 1/n$ we have 

$$\P\Big( |\frac{1}{N}  \log \|B_N\| | \ge t + \eps/N) \Big) \le (1+2/\eps)^n \big[\exp(-c\min\{t^2, t \} Nn) + N n^{-cn}\big].$$

\item (second exponent) For any $ t\ge 1/n$ we have 

$$\P\Big( |\frac{1}{N}  \log \sup_{(\Bx_1,\Bx_2)\in \Gamma_2} \|B_N\Bx_1\wedge B_N\Bx_2\|  | \ge t + \eps/N) \Big) \le (1+2/\eps)^n \big[\exp(-c\min\{t^2, t \} Nn) + N n^{-cn}\big].$$

\item (last exponent) For any constant $\eps>0$, there exists $C=C(\eps)$ such that

$$\P\Big( \inf_{\Bx\in S^{n-1}} \frac{1}{N} \log \|B_N \Bx\| \le -(\frac{1}{2}+\eps) \log n \Big) \le C^n \exp(-N/2) + Nn^{-\omega(1)}.$$
\end{enumerate}
\end{theorem}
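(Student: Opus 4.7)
The plan is to control $\sigma_{\min}(B_N) = \inf_{\Bx\in S^{n-1}}\|B_N\Bx\|$ via the identity $\sigma_{\min}(B_N) = |\det B_N|/\|\wedge^{n-1}B_N\|_{\mathrm{op}}$, bounding the two factors separately. Crucially, the direct Weyl inequality $\sigma_{\min}(B_N)\ge\prod_i\sigma_{\min}(A_i)$ does \emph{not} suffice: with the scaling of \eqref{eqn:subgaussian}, $\sigma_{\min}(A_i)\sim 1/n$ typically, so Weyl gives only $\sigma_{\min}(B_N)\gtrsim n^{-N}$, while Theorem \ref{theorem:IN} shows the correct size is $n^{-N/2}$. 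One must therefore exploit the Markov-chain structure of the product.

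\emph{Step 1 (determinant).} Since $\log|\det B_N| = \sum_{i=1}^N \log|\det A_i|$ is an i.i.d.\ sum, a sub-exponential Bernstein concentration applies once one controls the tails of $\log|\det A_i|$. A direct computation in the Gaussian case (via Bartlett's factorisation $|\det M|^2 \stackrel{d}{=}\prod_{k=1}^n\chi_k^2$) yields $\E\log|\det A_i| = -n/2 + O(\log n)$ and $\Var\log|\det A_i| = O(\log n)$; under \eqref{eqn:subgaussian} these moment estimates extend by universality (cf.\ Tao--Vu and Nguyen--Vu), and the Rudelson--Vershynin invertibility bound controls the lower tail. Bernstein then yields
$$\P\!\left(\log|\det B_N| \le -nN/2 - (\eps/2)\,N\log n\right) \le \exp(-cN/2)$$
for $n$ large enough depending on $\eps$.

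\emph{Step 2 (upper tail of $\|\wedge^{n-1}B_N\|_{\mathrm{op}}$).} This is the principal step. I would adapt the argument for part (2) from $\wedge^2$ to $\wedge^{n-1}$. Since $\wedge^{n-1}\R^n$ is $n$-dimensional, an $\eps_0$-net of its unit sphere has size $(C/\eps_0)^n$, producing the $C(\eps)^n$ prefactor. For each net blade $\omega$, decompose $\log\|\wedge^{n-1}B_N\,\omega\| = \sum_{i=1}^N\log\|\wedge^{n-1}A_i\,\omega_{i-1}\|$, where $\omega_{i-1}$ is the unit direction of $\wedge^{n-1}B_{i-1}\omega$, measurable with respect to $A_1,\dots,A_{i-1}$ and hence independent of $A_i$. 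Each conditional increment has mean $-n/2 + (\log n)/2 + O(1)$ (matching $\mu_1+\dots+\mu_{n-1}$). A martingale concentration bound, together with a tensorised small-ball estimate on $\|\wedge^{n-1}A_i\,\omega_{i-1}\|$ to rule out anomalously contracting steps, should give
$$\|\wedge^{n-1}B_N\|_{\mathrm{op}} \le \exp\!\bigl(-nN/2 + (\tfrac12+\tfrac{\eps}{2})N\log n\bigr)$$
except with probability $\le C(\eps)^n\exp(-N/2) + Nn^{-\omega(1)}$ (after passing from the net back to the sphere using a single-scale approximation).

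Combining Steps 1--2 on the intersection of the two good events yields $\log\sigma_{\min}(B_N)\ge -(\tfrac12+\eps)N\log n$, as required. The \emph{main obstacle} is Step 2: the submultiplicative bound $\|\wedge^{n-1}B_N\|_{\mathrm{op}}\le\prod_i\|\wedge^{n-1}A_i\|_{\mathrm{op}}$ is, via $\|\wedge^{n-1}A_i\|_{\mathrm{op}} = |\det A_i|/\sigma_{\min}(A_i)$, equivalent to Weyl's bound and loses a factor $n^{N/2}$ (because $\sigma_{\min}(A_i)\sim 1/n$, not $1/\sqrt n$). The whole point of the directional analysis above is to recover the missing $n^{N/2}$: the per-blade mean of $\log\|\wedge^{n-1}A_i\,\omega\|$ is smaller than the mean of $\log\|\wedge^{n-1}A_i\|_{\mathrm{op}}$ by exactly the factor $n^{1/2}$ per step, and only by treating $\wedge^{n-1}B_N$ as a Lyapunov-type dynamical object (rather than bounding each factor individually) can this gain be realised.
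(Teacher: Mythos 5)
Your proposal only addresses part (3) of the theorem (it invokes ``the argument for part (2)'' as a black box and says nothing about (1)), so at best it is a proof sketch of the last-exponent bound. Within part (3), the decisive gap is in your Step 2. To upper-bound $\|\wedge^{n-1}B_N\|_{\mathrm{op}}$ you take a net of size $(C/\eps_0)^n$ on the unit sphere of $\wedge^{n-1}\R^n$ and run a per-direction martingale argument; surviving that union bound requires every per-direction failure probability (other than the $\exp(-cN\log n)$ deviation term) to be exponentially small in $n$. But the per-step input you need is exactly an upper tail for $\log\|\wedge^{n-1}A_i\,\omega\|=\log|\det A_i|+\log\|A_i^{-T}\Bv\|$ at the level $-n/2+(\tfrac12+\eps)\log n$, and the only way to get the crucial $\tfrac12\log n$ (rather than $\log n$) is through delocalization of the singular vectors of $A_i$ against the fixed direction $\Bv$ together with least-singular-value bounds --- estimates that hold only with polynomially small exceptional probability (the isotropic delocalization of \cite{BEKYY} gives $n^{-A}$, and $\P(\sigma_n(A_i)\le\delta/n)\approx\delta$ is the true order, not exponentially small). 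Multiplying such exceptional probabilities by the $(C/\eps_0)^n$ net size destroys the bound, and no uniform-over-all-directions delocalization can hold (test $\Bv$ against a singular vector). If instead you retreat to the direction-free bound $\|A_i^{-T}\Bv\|\le\sigma_n(A_i)^{-1}\lesssim n/\delta$, you lose the half-power of $n$ per step and land exactly on the Weyl-type estimate $-(1+\eps)\log n$ that you correctly identified as insufficient. Step 1 has a smaller wrinkle: for discrete $\xi$ (e.g.\ Bernoulli) $A_i$ is singular with probability $\ge c^n$, so $\E\log|\det A_i|=-\infty$ and the Bernstein step needs a truncation, and the claimed $O(\log n)$-precision universality of the (truncated) mean of $\log|\det A_i|$ is itself a nontrivial input; this is repairable but not free.

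For comparison, the paper's route avoids both issues by never separating the determinant from the $(n-1)$-st compound. It first reduces $\inf_{\Bx\in S^{n-1}}\|B_N\Bx\|$ to a single column distance $\dist(\col_n,\mathrm{span}(\col_i,i\ne n))$ at the cost of a factor $n$ (not $C^n$), and then telescopes that distance along the product so that the determinants cancel and each increment is $\log\dist(A_i\Bv_i,\cdot)=-\log\|A_i^{-1}\Bv_i\|$ with $\Bv_i$ measurable in $A_1,\dots,A_{i-1}$ and hence independent of $A_i$. The delocalization, $\sum_j\sigma_j^{-2}$, and least-singular-value inputs are then needed only along this one dynamically defined sequence of directions, which is precisely why additive errors of size $Nn^{-\omega(1)}$ are affordable there, and an exponential-moment (Chernoff) bound on the truncated increments produces the $C^n\exp(-N/2)$ term. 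Your scheme could be rescued by replacing the operator-norm net with the Hilbert--Schmidt bound $\|\wedge^{n-1}B_N\|_{\mathrm{op}}\le\|\wedge^{n-1}B_N\|_{HS}$, i.e.\ a union over only the $n$ coordinate blades (the $\sqrt n$ loss is absorbed into $e^{\eps N\log n}$); but once you do that, your Step 2 collapses to the paper's column-distance telescoping, and Step 1 becomes unnecessary because the determinants cancel.
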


In short, (1) of Theorem \ref{theorem:main} extends Theorem \ref{theorem:K} to general matrix ensemble, with the extra assumption $N \ll n^{cn}$ and with $n_0(t)=O(1/t)$. It shows that although the chain dies out eventually, one (and the computer) can still see the concentration of the top exponent as long as $N$ is not exceedingly large. This also fits with the simulation presented in \cite{K1} (see Fig. 4 and the discussion afterwards.) By taking $t = 1/n$ and $\eps =1/2$, we obtain

$$\P\Big( |\frac{1}{N}  \log \|B_N\| | \ge O(1/n) \Big) \le   C^n[\exp(-N/n) +N n^{-cn}] .$$

We also show that the approach can be modified (in a non-trivial way)  to control other top Lyapunov's exponents: it follows from (1) and (2) that the asymptotic second exponent $\gamma_2$ is also well concentrated around zero, and the method seems to extend to other asymptotic $\gamma_k$ for any fixed $k$. Nevertheless, our concentration result is not local enough to see the difference between $\gamma_1$ and $\gamma_2$ as in Theorem \ref{theorem:IN}.

Finally, we  show in (3) that the asymptotic least exponent $\gamma_n$ is approximately at least $-\frac{1}{2}\log n$, which again fits with the calculation of Theorem \ref{theorem:IN}. Our control for $\gamma_n$, on the other hand, is not as sharp as for the top ones. It is not clear how $\gamma_n$ fluctuates around its mean, but we believe that it is not very well concentrated. Furthermore, a similar bound for $\P(\inf_{\Bx\in S^{n-1}} \frac{1}{N} \log \|B_N \Bx\| \ge -(\frac{1}{2}-\eps) \log n)$ is expected to hold, but we will not address this matter here (it is usual the case that bounding this quantity from below (i.e. (3)) is essentially harder than from bounding from above.)

In our next section we introduce the methods to prove Theorem \ref{theorem:main}.

\section{Proof method}\label{section:method}

\subsection{The top exponent}\label{subection:(1)} Here we discuss the method to prove (1) of Theorem \ref{theorem:main}. To estimate $\|B_N\| = \sup_{\Bx_0 \in S^{n-1}} \|B_N\Bx_0\|$, it is worth working with a finite collection of unit vectors $\Bx_0$. Let $\eps>0$ be a parameter, and let $\CN_{start}$ be an $\eps$-net of $S^{n-1}$. It is well-known that one can assume $|\CN_{start}| \le (1+2/\eps)^n$. The following is often used in the context of bounding the largest singular values of random matrices.

\begin{claim}\label{claim:netpassing} We have
$$\sup_{\Bx_0\in \CN_{start}} \|B_N \Bx_0\| \le \|B_N\|  \le (1- \eps)^{-1} \sup_{\Bx_0\in \CN_{start}} \|B_N \Bx_0\|.$$
\end{claim}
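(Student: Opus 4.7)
The plan is a standard $\eps$-net argument for the operator norm, with nothing probabilistic involved—the claim is purely a deterministic linear-algebraic fact about any fixed matrix $B_N$.

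The left inequality is immediate: since $\CN_{start} \subseteq S^{n-1}$, the supremum over the net is at most the supremum over the whole sphere, which equals $\|B_N\|$.

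For the right inequality, I would start by picking (or approaching via a limiting sequence, since the sphere is compact and the map $\Bx \mapsto \|B_N \Bx\|$ is continuous) a unit vector $\Bx \in S^{n-1}$ with $\|B_N \Bx\| = \|B_N\|$. By the definition of an $\eps$-net, there exists $\Bx_0 \in \CN_{start}$ with $\|\Bx - \Bx_0\| \le \eps$. Then by the triangle inequality
$$\|B_N\| = \|B_N \Bx\| \le \|B_N \Bx_0\| + \|B_N (\Bx - \Bx_0)\| \le \sup_{\By_0 \in \CN_{start}} \|B_N \By_0\| + \eps \|B_N\|,$$
where in the second step I bounded $\|B_N(\Bx-\Bx_0)\| \le \|B_N\| \cdot \|\Bx - \Bx_0\| \le \eps \|B_N\|$. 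Rearranging gives $(1-\eps)\|B_N\| \le \sup_{\Bx_0 \in \CN_{start}} \|B_N \Bx_0\|$, which is the claimed bound (assuming $\eps < 1$, which is tacitly required for the multiplicative constant $(1-\eps)^{-1}$ to make sense).

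There is no real obstacle here—the argument is textbook. The only minor care needed is the existence of a maximizer, which is handled by compactness of $S^{n-1}$, or equivalently one can run the same estimate for any $\Bx \in S^{n-1}$ and take the supremum afterwards. The bound on the cardinality $|\CN_{start}| \le (1+2/\eps)^n$ that is quoted before the claim is not needed for the claim itself, but is what makes the claim useful when combined with a union bound over $\CN_{start}$ in the probabilistic applications later.
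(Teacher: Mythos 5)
Your proof is correct and is exactly the standard argument the paper has in mind: the paper states Claim \ref{claim:netpassing} without proof as well known, but its proof of the analogous Claim \ref{claim:netpassing:2} uses the same device of taking a maximizer on the sphere, approximating it by a net point, and applying the triangle inequality before rearranging. Nothing is missing (your remark that $\eps<1$ is tacitly assumed is the only caveat, and it is harmless).
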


With this claim, one hopes to control $\frac{1}{N}\log \|B_N\|$ (up to an approximated factor $1+\frac{1}{N}\log (1+\eps)$ and up to a correcting factor $(1+2/\eps)^n$ in probability) by establishing a strong concentration result for $\frac{1}{N} \log \|B_N \Bx_0\|$ for each $\Bx_0\in \CN_{start}$. This was also the main starting point of \cite{K}.

Let $\Bx_0$ be an element of $\CN_{start}$. One might write

\begin{align*}
\log \|B_N\Bx_0 \|&= \log \|A_N A_{N-1} \dots A_2 A_1 \Bx_0\|\\
& = \log \|A_N \frac{A_{N-1} \dots A_2 A_1 \Bx_0}{\|A_{N-1} \dots A_2 A_1 \Bx_0\|}\|+ \log \|A_{N-1} \frac{A_{N-2} \dots A_2 A_1 \Bx_0}{\|A_{N-2} \dots A_2 A_1 \Bx_0\|}\|+\dots+ \\
&+ \dots + \log \| A_2 \frac{A_1 \Bx_0}{\|A_1 \Bx_0\|}\| + \log \|A_1\Bx_0\|\\
&= \sum_{i=0}^{N-1} \log \|A_{i+1}\Bx_i \|,
\end{align*}

where 

\begin{equation}\label{eqn:x_i}
\Bx_i: =  \frac{A_{i} \dots A_2 A_1 \Bx_0}{\|A_{i} \dots A_2 A_1 \Bx_0\|}.
\end{equation}

When $\xi$ has discrete distribution such as Bernoulli, there would be a minor problem that $B_N \Bx_0$ might be vanishing, but we could rule out this possibility by choosing the net $\CN_{start}$ to consist of vectors of "highly irrational" entries (which remain highly irrational under the actions of the matrices $A_i$). 

Now we want to control $\log \| A_{i+1} \Bx_i\|$ conditioning on $A_1,\dots, A_i$ (and hence on $\Bx_i$). Note that 

$$\E_{A_{i+1}} \|A_{i+1} \Bx_i\|^2 = 1.$$ 

Roughly speaking, to hope for a good concentration of $\log \| A_{i+1} \Bx_i\|$ around zero, the very first step we have to guarantee is that with high probability with respect to $A_{i+1}$, the vector norm $\| A_{i+1} \Bx_i\|$ is being well away from zero. 

This probability certainly depends on the structure of $\Bx_i$. For instance if $\Bx_i =(\pm 1/\sqrt{2}, \pm 1/\sqrt{2},0,..,0)$ or $\Bx_i=(\pm 1\/\sqrt{n}, \dots, \pm 1/\sqrt{n})$ then the chance that $\|A_{i+1}\Bx_i\|$ being small (or even being annihilated) is not quite small if we are working with Bernoulli matrices. With this in mind, our general strategy consists of three main steps.

\begin{itemize}
\item (Step 1.) (dynamics and structures) find a set $\CS$ of $S^{n-1}$ with the following properties:

\begin{itemize} 
\item $\CS$ covers an $\eps$-net $\CN_{start}$ of $S^{n-1}$,
\item $\CS$ remains stable under the action of each $A_i$. In other words, with high very high probability all of the normalized vectors $\Bx_i$ from \eqref{eqn:x_i} belong to $\CS$;
\item  for any $\Bx\in \CS$, with high probability with respect to $A_{i+1}$ the norm $\|A_{i+1} \Bx\|$ is bounded away from zero.
\end{itemize}
\vskip .1in
\item (Step 2.) (concentration over good vectors) we show that for each $\Bx_i \in \CS$,  $\log \| A_{i+1} \Bx_i\|$ is very well concentrated around zero.
\vskip .1in
\item (Step 3.) (law of large number) use concentration information from Step 2 to prove Theorem \ref{theorem:main} .
\end{itemize}

We will lay out the choice of $\CS$ in Section \ref{section:step1}. Step 2 will be carried out in Section \ref{section:step2}, and Step 3 is concluded in Section \ref{section:step3}.

\subsection{The second exponent}\label{subsection:(2)} We will extend the ideas of the previous subsection to deal with (2) of  Theorem \ref{theorem:main}. First of all, let $\CP_{start}$ be some subset of $S^{n-1} \times S^{n-1}$ that covers an $\eps$-net (that is  for any $(\Bx, \By) \in S^{n-1} \times S^{n-1}$ there exists $(\Bx',\By') \in \CP_{start}$ such that $\|\Bx-\Bx'\|, \|\By-\By'\|\le \eps$). Similarly to Claim \ref{claim:netpassing}, we have the following.

\begin{claim}\label{claim:netpassing:2}
$$ \sup_{(\Bx,\By)\in S^{n-1} \times S^{n-1}} \Vol_2 (B_N\Bx, B_N\By) \le (1 -2\eps -\eps^2)^{-1}  \sup_{(\Bx',\By')\in  S^{n-1} \times S^{n-1}\cap \CP_{start} } \Vol_2 (B_N\Bx', B_N\By') .$$
\end{claim}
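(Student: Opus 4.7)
The plan is to run the same style of $\eps$-net argument used for Claim \ref{claim:netpassing}, but now applied to the bilinear functional $(\Bx,\By) \mapsto \|B_N\Bx \wedge B_N\By\|$ on $S^{n-1}\times S^{n-1}$. First I set $M := \sup_{(\Bx,\By)\in S^{n-1}\times S^{n-1}}\Vol_2(B_N\Bx, B_N\By)$ and $M^{\ast} := \sup_{(\Bx',\By')\in \CP_{start}}\Vol_2(B_N\Bx', B_N\By')$. By continuity of the wedge and compactness of $S^{n-1}\times S^{n-1}$, the supremum defining $M$ is attained at some pair $(\Bx_0,\By_0)$, and by the covering hypothesis I can choose $(\Bx',\By')\in \CP_{start}$ with $\|\Bx_0-\Bx'\|\le \eps$ and $\|\By_0-\By'\|\le \eps$.

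The core of the argument is a direct use of the bilinearity of the wedge product. Writing $\Bx_0 = \Bx' + (\Bx_0-\Bx')$ and $\By_0 = \By' + (\By_0-\By')$, applying the linear map $B_N$ and expanding, I obtain
$$B_N\Bx_0\wedge B_N\By_0 = B_N\Bx'\wedge B_N\By' + B_N(\Bx_0-\Bx')\wedge B_N\By' + B_N\Bx'\wedge B_N(\By_0-\By') + B_N(\Bx_0-\Bx')\wedge B_N(\By_0-\By').$$
Each of the three correction terms factors, after pulling the scalar $\|\Bx_0-\Bx'\|$ or $\|\By_0-\By'\|$ out, into (a difference norm, which is at most $\eps$) times (a wedge $B_N\Bu\wedge B_N\Bv$ of two unit vectors). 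The latter is bounded by $M$ by definition, so the corrections contribute $\eps M$, $\eps M$, and $\eps^2 M$ respectively. The triangle inequality then yields
$$M \;=\; \|B_N\Bx_0\wedge B_N\By_0\| \;\le\; M^{\ast} + (2\eps + \eps^2)\, M,$$
and rearranging gives $M \le (1 - 2\eps - \eps^2)^{-1} M^{\ast}$, which is the claim (valid whenever $\eps$ is small enough that $1-2\eps-\eps^2 > 0$).

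There is no genuine obstacle here: this is the exact bilinear analog of Claim \ref{claim:netpassing}, and the replacement of the factor $(1-\eps)^{-1}$ by $(1-2\eps-\eps^2)^{-1}$ is precisely what bilinear expansion forces, since it produces two first-order and one second-order correction rather than a single first-order term. The only minor bookkeeping is the degenerate situation where $\Bx_0 = \Bx'$ or $\By_0 = \By'$: in that case the corresponding correction is literally zero and drops out of the triangle inequality, so no renormalization of a vanishing vector is needed.
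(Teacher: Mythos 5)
Your proof is correct and essentially identical to the paper's: both expand $B_N\Bx\wedge B_N\By$ bilinearly against the nearby pair $(\Bx',\By')\in\CP_{start}$, apply the triangle inequality, pull out the scalar factors $\|\Bx-\Bx'\|,\|\By-\By'\|\le\eps$ to bound the three correction terms by $\eps M$, $\eps M$, $\eps^2 M$, and rearrange. The degeneracy remark at the end is a harmless extra; no gap.
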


\begin{proof}(of Claim \ref{claim:netpassing:2})
Assume that $\sup_{(\Bx,\By)\in  S^{n-1} \times S^{n-1} } \Vol_2 (B_N\Bx, B_N\By)$ is attained at $(\Bx,\By)$. Let $(\Bx',\By')$ be an element in $\CP_{start}$ such that $\|\Bx-\Bx'\|\le \eps$ and $\|\By-\By'\|\le \eps$. By the triangle inequality 
\begin{align*}
\Vol_2 (B_N\Bx, B_N\By) &\le \Vol_2 (B_N\Bx', B_N\By') + \Vol_2 (B_N (\Bx-\Bx'),B_N\By)\\
&+ \Vol_2 (B_N \Bx',B_N(\By-\By'))+ \Vol_2(B_N (\Bx-\Bx'), B_N (\By-\By'))\\
&\le \Vol_2 (B_N\Bx',B_N\By')  + (2\eps+\eps^2) \sup_{(\Bz_1,\Bz_2)\in  S^{n-1} \times S^{n-1} } \Vol_2 (B_N\Bz_1, B_N\Bz_2).
\end{align*}
\end{proof}

Beside containing an $\eps$-net we will also choose  $\CP_{start} \subset S^{n-1} \times S^{n-1}$ to satisfy certain non-structured properties (such as $\CP_{start} \subset \CP$, a broader set to be introduced below). The detail of construction of $\CP_{start}$ will be presented in Section \ref{section:second}.

Now let $(\Bx_0,\By_0) \in \CP_{start}$. As customary, one might write

\begin{align*}
\log \|B_N\Bx_0 \wedge B_N \By_0 \|&= \log \|A_N A_{N-1} \dots A_2 A_1 \Bx_0 \wedge A_N A_{N-1} \dots A_2 A_1 \By_0 \|\\
& = \log \|A_N \frac{A_{N-1} \dots A_2 A_1 \Bx_0 \wedge A_{N-1} \dots A_2 A_1 \By_0 }{\|A_{N-1} \dots A_2 A_1 \Bx_0 \wedge A_{N-1} \dots A_2 A_1 \By_0 \|} \|\\ 
&+ \log \|A_{N-1} \frac{A_{N-2} \dots A_2 A_1 \Bx_0 \wedge A_{N-2} \dots A_2 A_1 \By_0 }{\|A_{N-2} \dots A_2 A_1 \Bx_0 \wedge A_{N-2} \dots A_2 A_1 \By_0 \|}\|+ \dots\\ 
&+ \log \| A_2 \frac{A_1 \Bx_0 \wedge A_1 \By_0}{\|A_1 \Bx_0 \wedge A_1 \By_0 \|}\| + \log \|A_1\Bx_0 \wedge A_1 \By_0\|\\
&= \sum_{i=0}^{N-1} \log \frac{\|A_{i+1}\Bx_i \wedge A_{i+1}\By_i\|}{\|\Bx_i \wedge \By_i\|},
\end{align*}

where 

$$\Bx_i: =  \frac{A_{i} \dots A_2 A_1 \Bx_0}{\|A_{i} \dots A_2 A_1 \Bx_0\|} \mbox{ and } \By_i: =  \frac{A_{i} \dots A_2 A_1 \By_0}{\|A_{i} \dots A_2 A_1 \By_0\|}.$$

To control $\frac{\|A_{i+1}\Bx_i \wedge A_{i+1}\By_i\|}{\|\Bx_i \wedge \By_i\|}$, we first pull out $\|A_{i+1}\Bx_i\|$ and $\|A_{i+1}\By_i\|$

\begin{align*} 
\log \frac{\|A_{i+1}\Bx_i \wedge A_{i+1}\By_i\|}{\|\Bx_i \wedge \By_i\|} &= \log \|A_{i+1} \Bx_i\| + \log  \|A_{i+1} \By_i\| \\
&+\log \frac{\|A_{i+1}\Bx_i/\|A_{i+1}\Bx_i\| \wedge A_{i+1}\By_i/\|A_{i+1}\By_i\|\|}{\|\Bx_i \wedge \By_i\|}\\
&= \log \|A_{i+1} \Bx_i\| + \log  \|A_{i+1} \By_i\| + \log \frac{\|\Bx_{i+1} \wedge \By_{i+1}\|}{\|\Bx_i \wedge \By_i\|}
\end{align*}

By our treatment of the top exponent, one has very good control on $ \log \|A_{i+1} \Bx_i\| + \log  \|A_{i+1} \By_i\|$, thus the main task is to study the remaining  term. To hope for a good concentration of $\log \frac{\|\Bx_{i+1} \wedge \By_{i+1}\|}{\|\Bx_i \wedge \By_i\|}$ around zero, among other things we have to guarantee that $\Bx_i \wedge \By_i \neq 0$ with high probability, and within this event that $ \frac{\|\Bx_{i+1} \wedge \By_{i+1}\|}{\|\Bx_i \wedge \By_i\|}$ is close to one. Thus compared to the previous section, beside the task of bounding $\|\Bx_{i+1}\|, \|\By_{i+1}\|, \|\Bx_i\|, \|\By_i\|$ away from zero, we will have to show that the angles between these vectors are highly stable under the process, and this task is more complicated. Nevertheless, our plan remains the same in principle.

\begin{itemize}
\item (Step 1.) (dynamics and structures) find a set $\CP$ of pair vectors in $\R^n$ such that $\CP_{start} \subset \CP$ and which remains stable under the action of the $A_i$'s with given $(\Bx_i,\By_i)\in \CP$: with very high probability with respect to $A_{i+1}$ 

$$(\Bx_{i+1}, \By_{i+1}) =(\frac{A_{i+1}\Bx_i}{\|A_{i+1} \Bx_i\|}, \frac{A_{i+1}\By_i}{\|A_{i+1} \By_i\|}) \in \CP.$$
\vskip .1in
\item (Step 2.) (concentration over good vectors) show that for given $(\Bx_i,\By_i) \in \CP$, with very high probability with respect to $A_{i+1}$ the norm $\|\frac{\Bx_{i+1} \wedge \By_{i+1}}{\|\Bx_i \wedge \By_i\|}\|$ is very well concentrated around one.
\vskip .1in
\item (Step 3.) (law of large number) use concentration information from Step 2 to prove (2) of Theorem \ref{theorem:main}.
\end{itemize}

We will present a full proof of (2) of Theorem \ref{theorem:main} with a more detailed description of the three steps above in Section \ref{section:second}.

\subsection{The last exponent}\label{subsection:(3)} Now we discuss the method to prove (3) of Theorem \ref{theorem:main}. Here the net argument does not work at all. We will have to relate the smallest Lyapunov exponent to the distances among the rows of the matrices $A_i$.

Let $\eps>0$ be a given small constant, and consider the event 

$$\CE_\eps=\Big\{\inf_{\Bx\in S^{n-1}} \|B_N \Bx\| \le T_\eps:= ((1-\eps)/\sqrt{n})^N\Big \} .$$
 
As $\Bx=(x_1,\dots,x_n) \in S^{n-1}$, there exists $i_0\in [n]$ such that $|x_{i_0}|\ge 1/\sqrt{n}$. With $\Bc_i = B_N \Be_i$ being the $i$-th column vector of $B_N$, it follows from $\|\sum_{i} x_i \col_i\|\le T_\eps$ that 

$$\dist(\col_{i_0}, span(\col_i, i \neq i_0)) \le \sqrt{n} T_\eps.$$

Let $\CE_{\eps,1}$ be the event that 

$$\CE_{\eps,1} = \Big\{\log \dist(\col_{n}, span(\col_i, i \neq n)) \le \log \sqrt{n} + \log  T_\eps \Big \}.$$ 

We then have 

$$\P(\CE_\eps) \le n\P(\CE_{\eps,1}).$$

Thus for the upper bound, the main focus is to estimate $\P(\CE_{\eps,1})$. We will show that this probability is so small that the extra factor $n$ will not affect at all.

In general, for any general non-degenerate tuple $(\Bv_1,\dots,\Bv_n)$, 

$$\|\frac{A\Bv_1 \wedge \dots \wedge A \Bv_n}{\Bv_1 \wedge \dots \wedge \Bv_n}\| = \frac{\sqrt{\det (V^\ast A^\ast A V)}}{\sqrt{V^\ast V}}= |\det(A)|.$$ 

Also, for any non-degenerate tuple $(\Bv_1,\dots, \Bv_{n-1})$, with $\Bv_n \in S^{n-1}$ being orthogonal to all other $\Bv_i, 1\le i\le n-1$, we write

\begin{align*}
\|\frac{A\Bv_1 \wedge \dots \wedge A \Bv_{n-1}}{\Bv_1 \wedge \dots \wedge \Bv_{n-1}}\| &= \frac{|\det(A\Bv_1, \dots, A \Bv_{n-1})|}{|\det(\Bv_1, \dots, \Bv_{n-1})|}\\
& = \frac{|\det(A\Bv_1, \dots, A \Bv_{n-1}, A\Bv_n)|/ \dist(A\Bv_n, H_{A\Bv_1,\dots,A\Bv_{n-1}}) }{|\det(\Bv_1, \dots, \Bv_{n-1},\Bv_n)|}\\
&=  \frac{|\det(A)|}{\dist(A\Bv_n, H_{A\Bv_1,\dots,A\Bv_{n-1}}) },
\end{align*}

where $H_{A\Bv_1,\dots,A\Bv_{n-1}}$ is the subspace spanned by $A\Bv_1,\dots, A\Bv_{n-1}$. Taking $\Bv_i$ to be the standard normal basis $\Be_i$ we thus obtain,

\begin{align*}
\log \dist(\col_{n}, span(\col_i, i \neq n))  & = \log \dist(B_N\Be_n, H_{B_N\Be_1,\dots,B_N\Be_{n-1}}) \\
&= \log \|\frac{B_N\Be_1 \wedge \dots \wedge B_N \Be_n}{\Be_1 \wedge \dots \wedge \Be_n}\| - \log \|\frac{B_N\Be_1 \wedge \dots \wedge B_N \Be_{n-1}}{\Be_1 \wedge \dots \wedge \Be_{n-1}}\|.
\end{align*}

Now as $B_N = A_N \dots A_1$, we can rewrite the second term of the above formula as 

\begin{align*}
&\log \|\frac{B_N\Be_1 \wedge \dots \wedge B_N \Be_{n-1}}{\Be_1 \wedge \dots \wedge \Be_{n-1}}\|  = \log \|\frac{A_N \dots A_1\Be_1 \wedge \dots \wedge A_N \dots A_1 \Be_{n-1}}{\Be_1 \wedge \dots \wedge \Be_{n-1}}\|\\
& \log \|\frac{A_N (A_{N-1} \dots A_1\Be_1) \wedge \dots \wedge A_N(A_{N-1} \dots A_1 \Be_{n-1})}{A_{N-1} \dots A_1\Be_1 \wedge \dots \wedge A_{N-1} \dots A_1 \Be_{n-1})}\| + \log \|\frac{A_{N-1} \dots A_1\Be_1 \wedge \dots \wedge A_{N-1} \dots A_1 \Be_{n-1}}{A_{N-2} \dots A_1\Be_1 \wedge \dots \wedge A_{N-2} \dots A_1\Be_{n-1}}\| \\
&+ \dots + \log \|\frac{A_1\Be_1 \wedge \dots \wedge A_1 \Be_{n-1}}{\Be_1 \wedge \dots \wedge \Be_{n-1}}\|.
\end{align*}

Decomposing similarly for $\log \|B_N\Be_1 \wedge \dots \wedge B_N \Be_{n}/\Be_1 \wedge \dots \wedge \Be_{n}\|$, we obtain

\begin{equation}\label{eqn:decomposition:dist}
\log \dist(B_N\Be_n, H_{B_N\Be_1,\dots,B_N\Be_{n-1}}) = \sum_i \log \dist(A_i \Bv_i, H_{A_i \dots A_1 \Be_1,\dots,A_i \dots A_1 \Be_{n-1}}),
\end{equation}

where $\Bv_i$ is a unit vector that is orthogonal to the vectors $\Bu_{i1},\dots, \Bu_{i (n-1)}$ satisfying

$$\Bu_{i1}=A_{i-1}\dots A_1 \Be_1,\dots, \Bu_{i (n-1)}=A_{i-1}\dots A_1 \Be_{n-1}.$$ 

Now as $(A_i^{-1}\Bv_i)^{T} A_i \Bu_{ij} = \Bv_i^T\Bu_{ij}=0$, the vector $A_i^{-1}\Bv_i/\|A_i^{-1}\Bv_i\|$ is the unit normal vector of the subspace $H_{A_i\Bu_{i1},\dots, A_i\Bu_{i(n-1)}}$, and so

\begin{equation}\label{eqn:formula:dist} 
\dist(A_i \Bv_i, H_{A_i \dots A_1 \Be_1,\dots,A_i \dots A_1 \Be_{n-1}}) = \frac{1}{\|A_i^{-1}\Bv_i\|}.
\end{equation}

Note that the vectors $\Bu_{i1},\dots, \Bu_{i(n-1)}$ and $\Bv_i$ are independent of $A_i$, and hence it boils down to study the upper bound of $\|A_i^{-1}\Bv_i\|$ with the randomness with respect to $A_i$.  We will study this in more detail in Section \ref{section:least}.

The rest of the paper is organized as follows. We will provide a detailed treatment for (1) of Theorem \ref{theorem:main} throughout sections \ref{section:step1}, \ref{section:step2} and \ref{section:step3}. We then sketch the proof of (2) of Theorem \ref{theorem:main} in Section \ref{section:second}. The proof of (3) of Theorem \ref{theorem:main} will be presented in Section \ref{section:least}. We conclude the note by Section \ref{section:remark} with some remarks. 

{\bf Notation}. Throughout this paper, we regard $N$ as an asymptotic parameter going to infinity. We write $X = O(Y ), X\ll Y$, or $Y\gg X$ to denote the claim that $|X| \le CY$ for some fixed $C$; this fixed quantity $C$ is allowed to depend on other fixed quantities such as the sub-gaussian parameter $K$ of $\xi$ unless explicitly declared otherwise. We also use $o(Y )$ to denote any quantity bounded in magnitude by $c(N)Y$ for some $c(N)$ that goes to zero as $N\to \infty$.

\section{Step 1 for (1) of Theorem \ref{theorem:main}: structures under matrix action}\label{section:step1}  Our choice of the set $\CS$ is  motivated by recent ideas from Tao-Vu \cite{TV1,TV2} and from Rudelson-Vershynin \cite{rv,rv-rec} in the context of controlling the small ball probability of random walk. Although this looks surprising at first, the reader will see later that these structures are indeed the right  object to work with.

We first introduce the notion of {\it least common denominator} by Rudelson and Versynin (see \cite{rv}). Fix parameters $\kappa$  and $\gamma$, where $\gamma \in (0,1)$. For any nonzero vector $\Bx$ define
$$
\LCD_{\kappa,\gamma}(\Bx)
:= \inf \Big\{ \theta> 0: \dist(\theta \Bx, \Z^n) < \min (\gamma \| \theta \Bx \|,\kappa) \Big\}.
$$

We record a few easy consequences of $\LCD$.

\begin{fact}\label{fact:approxLCD} We have

\begin{itemize} 
\item If $\By = \lambda \Bx$ with $\lambda \neq 0$, then 

$$\LCD_{\kappa, \gamma}(\By) = \frac{1}{|\lambda|} \LCD_{\kappa, \gamma}(\Bx).$$
\item Assume that $\|\Bx\|, \|\By\| \ge \eps$ with $D=\LCD_{\gamma,\kappa}(\Bx) \ge 1$ and $\|\Bx-\By\|  \le D^{-2}$, then 

$$\LCD_{\kappa+1,\gamma+\frac{1}{D}}(\By) \le \LCD_{\kappa,\gamma}(\Bx).$$
\end{itemize}
\end{fact}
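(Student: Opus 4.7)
\textbf{Proof proposal for Fact \ref{fact:approxLCD}.} Both bullets are purely definitional consequences, and the plan is to verify them by substituting an appropriate $\theta$ into the infimum that defines $\LCD_{\kappa,\gamma}$.

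For the first bullet, the idea is a change of variable $\phi = |\lambda|\theta$. For any $\theta>0$ one has $\theta\By = (\lambda\theta)\Bx$, so by the reflection invariance of $\Z^n$
\[
\dist(\theta\By,\Z^n)=\dist(\phi\Bx,\Z^n),\qquad \|\theta\By\|=\|\phi\Bx\|.
\]
Hence $\theta$ is admissible for $\By$ (relative to $\kappa,\gamma$) if and only if $\phi=|\lambda|\theta$ is admissible for $\Bx$. Taking the infimum on both sides of this equivalence yields the equality $\LCD_{\kappa,\gamma}(\By)=\LCD_{\kappa,\gamma}(\Bx)/|\lambda|$.

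For the second bullet, the plan is to use the scale $\theta=D$ as a witness in the definition of $\LCD_{\kappa+1,\gamma+1/D}(\By)$. By definition of $D=\LCD_{\kappa,\gamma}(\Bx)$ (or a value arbitrarily close to the infimum) we have
\[
\dist(D\Bx,\Z^n)<\min\bigl(\gamma\|D\Bx\|,\kappa\bigr).
\]
Now move from $\Bx$ to $\By$ using $\|\Bx-\By\|\le D^{-2}$ and the triangle inequality:
\[
\dist(D\By,\Z^n)\le \dist(D\Bx,\Z^n)+D\|\Bx-\By\|\le \dist(D\Bx,\Z^n)+\frac{1}{D}.
\]
Plugging in the two bounds on $\dist(D\Bx,\Z^n)$ yields $\dist(D\By,\Z^n)<\kappa+1$ on the one hand, and on the other
\[
\dist(D\By,\Z^n)<\gamma\|D\Bx\|+\tfrac1D\le \gamma\|D\By\|+\gamma D\|\Bx-\By\|+\tfrac1D\le \Bigl(\gamma+\tfrac1D\Bigr)\|D\By\|
\]
after absorbing the $O(1/D)$ slack into the margin $1/D$ allowed by raising $\gamma$ to $\gamma+1/D$ (which costs $\|D\By\|/D=\|\By\|\ge\eps$ on the right-hand side, dominating the residual terms since $D\ge 1$). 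Thus $\theta=D$ is admissible for $\By$ with parameters $(\kappa+1,\gamma+1/D)$, giving $\LCD_{\kappa+1,\gamma+1/D}(\By)\le D$.

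The only subtlety, and the place where care is needed, is in the second bullet: one must verify that the perturbation picked up by switching $\Bx\mapsto\By$ in both the $\kappa$-constraint and the $\gamma$-constraint is indeed absorbed by the slight loosening of the parameters. The lower bound $\|\By\|\ge\eps$ together with $D\ge 1$ is what makes this arithmetic go through; no further probabilistic or analytic input is required.
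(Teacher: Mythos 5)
Your argument is correct and follows essentially the same route as the paper: the first bullet is the trivial change of scale $\phi=|\lambda|\theta$ (which the paper does not even spell out), and the second bullet uses an admissible scale for $\Bx$ as a witness for $\By$ via the triangle inequality $\dist(D\By,\Z^n)\le\dist(D\Bx,\Z^n)+D\|\Bx-\By\|\le\dist(D\Bx,\Z^n)+1/D$, exactly as in the paper's proof. Note only that your final absorption step really requires $D\gtrsim 1/\eps$ rather than merely $D\ge 1$ and $\|\By\|\ge\eps$; this is the same implicit slack present in the paper's own computation and is harmless in the applications, where $D=\exp(n^c)$.
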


\begin{proof}(of Fact \ref{fact:approxLCD}) Assume that $\dist(D \Bx, \Z^n) \le \min \{ \gamma \|D \Bx\|, \kappa\}$ for some $D>0$. Then as $\|\Bx - \By\| \le 1/D^2$,  by the triangle inequality we have 

$$\dist(D \By, \Z^n) \le \min \{ \gamma (\|D \By\|+ D\|\Bx-\By\|), \kappa + D \|\Bx-\By\|\} \le  \min \{ (\gamma+\frac{1}{D}) \|D \By\|, \kappa + 1\} .$$

\end{proof}

There are two main advantages of working with unit vectors $\Bx=(x_1,\dots, x_n)$ of large LCD. First, as it turns out, if  $\LCD(\Bx)$ is large then the random sum $\xi_1 x_1+\dots+\xi_n x_n$, where $\xi_i$ are iid copies of $\xi$ from \eqref{eqn:subgaussian}, behaves like a continuous random variable of bounded density (even when the $\xi_i$ are discrete.) This statement is the content of the following result.

\begin{theorem}\cite{rv}\label{theorem:ball} For every   
 $$
  \eps \ge \frac{1}{\LCD_{\kappa,\gamma}(\Bx)}
  $$
  we have
  $$
 \sup_x \P(|\xi_1 x_1+\dots+ \xi_n x_n - x| \le \eps) =O\left(\frac{\eps}{\gamma} + e^{-\Theta(\kappa^2)}\right),
  $$

where the implied constants depend on $\xi$.
\end{theorem}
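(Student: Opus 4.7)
The plan is to prove this via the standard route combining Esseen's concentration inequality with a characteristic-function bound that activates the definition of $\LCD$. Let $S = \xi_1 x_1 + \dots + \xi_n x_n$ with characteristic function $\phi_S(t) = \prod_{j=1}^n \phi_\xi(tx_j)$. Esseen's inequality gives
\[
\sup_x \P(|S - x| \le \eps) \lesssim \eps \int_{|t| \le 1/\eps} |\phi_S(t)|\, dt,
\]
so the entire task reduces to bounding $|\phi_S(t)|$ by an integrable function on $[-1/\eps, 1/\eps]$.

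To do this I would symmetrize. Let $\xi'$ be an independent copy of $\xi$ and set $\bar\xi := \xi - \xi'$, which has mean $0$, variance $2$, and sub-Gaussian tails. Then $|\phi_\xi(s)|^2 = \E\cos(s\bar\xi)$, and combining this with the elementary lower bound $1 - \cos y \ge c\,\dist(y, 2\pi\Z)^2$ and the inequality $1 - u \le e^{-u}$ yields
\[
|\phi_S(t)|^2 \le \E_{\bar\xi}\exp\!\Big(-c\sum_{j=1}^n \dist(tx_j\bar\xi, 2\pi\Z)^2\Big).
\]
The point of this reduction is that the exponent is exactly the kind of lattice-distance sum that the LCD was designed to control.

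Next I would invoke the definition of $\LCD$. Writing $D := \LCD_{\kappa,\gamma}(\Bx)$ and $\theta := |t\bar\xi|/(2\pi)$, the definition of $D$ together with $\|\Bx\|=1$ implies that for every $\theta < D$,
\[
\sum_{j=1}^n \dist(\theta x_j, \Z)^2 \ge \min(\gamma^2\theta^2, \kappa^2).
\]
The hypothesis $\eps \ge 1/D$ forces $|t| \le D$ throughout the range of integration, and restricting the expectation over $\bar\xi$ to a bounded annulus $\{c_0 \le |\bar\xi| \le c_1\}$ with $c_1 < 2\pi$ (which has positive probability since $\bar\xi$ is symmetric, has variance $2$, and is not concentrated at a single point) keeps $\theta < D$ and simultaneously bounds $|\bar\xi|$ away from zero. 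On this event the LCD bound applies and contributes $\gtrsim \min(\gamma^2 t^2, \kappa^2)$, so
\[
|\phi_S(t)| \lesssim \exp(-c\gamma^2 t^2) + \exp(-c\kappa^2).
\]

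Integrating over $|t| \le 1/\eps$ and multiplying by $\eps$ as dictated by Esseen produces the two advertised summands: the Gaussian tail integrates to $O(1/\gamma)$, yielding the $\eps/\gamma$ term after the outer factor of $\eps$, while the constant $e^{-c\kappa^2}$ integrates over a range of length $2/\eps$ and contributes the $e^{-\Theta(\kappa^2)}$ term. The main delicate step is the conditioning on $\bar\xi$: one must choose the annulus so that $|t\bar\xi|/(2\pi)$ never exits the range on which the LCD bound applies, while still having $|\bar\xi|$ bounded below on an event of positive probability so that the quadratic $\gamma^2 t^2\bar\xi^2$ dominates uniformly. The sub-Gaussian tail of $\xi$ makes contributions from atypical $\bar\xi$ negligible; this is precisely where the distributional hypothesis on $\xi$ enters (and why the implied constants depend on $\xi$).
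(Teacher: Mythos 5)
The paper itself offers no proof of this statement --- it is quoted from \cite{rv} --- and your route (Esseen's inequality, symmetrization, an elementary bound of $1-\cos$ by the squared distance to $2\pi\Z$, and then the definition of $\LCD$ on the relevant scale $\theta=|t\bar\xi|/(2\pi)<\LCD_{\kappa,\gamma}(\Bx)$) is exactly the argument of that source. There are, however, two genuine gaps in the execution. The first is the placement of the expectation over the symmetrized variable: you claim $|\phi_S(t)|^2\le \E_{\bar\xi}\exp\bigl(-c\sum_j\dist(tx_j\bar\xi,2\pi\Z)^2\bigr)$ and then ``restrict the expectation to the annulus.'' With the expectation outside the exponential this restriction is not legitimate: the complementary event contributes $\P\bigl(|\bar\xi|\notin[c_0,c_1]\bigr)=1-p$, a constant bounded away from zero, so this chain only yields $|\phi_S(t)|^2\le(1-p)+e^{-c\min(\gamma^2t^2,\kappa^2)}$, and the Esseen integral then produces a constant rather than $O(\eps/\gamma+e^{-\Theta(\kappa^2)})$. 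The correct assembly keeps the expectation inside the exponent: from $|\phi_\xi(s)|^2=\E\cos(s\bar\xi)\le\exp\bigl(-\E(1-\cos(s\bar\xi))\bigr)$ one gets $|\phi_S(t)|\le\exp\bigl(-c\,\E_{\bar\xi}\bigl[\mathbf{1}_{\{c_0\le|\bar\xi|\le c_1\}}\dist\bigl(\tfrac{t\bar\xi}{2\pi}\Bx,\Z^n\bigr)^2\bigr]\bigr)$, so conditioning costs only the factor $p$ inside the exponent and the Gaussian decay in $t$ survives.

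The second gap is the claim that an annulus $\{c_0\le|\bar\xi|\le c_1\}$ with $c_1<2\pi$ has positive probability because $\bar\xi$ is symmetric, has variance $2$, and is not a point mass. This is false in general for mean-zero, variance-one, subgaussian $\xi$: take $\xi=\pm 10$ with probability $1/200$ each and $\xi=0$ otherwise; then $|\bar\xi|\in\{0,10,20\}$ carries no mass in $(0,2\pi)$. The standard repair, as in \cite{rv,rv-rec}, is to choose $c_0,c_1,p$ depending only on the subgaussian moment of $\xi$ (Paley--Zygmund for the lower cutoff, the subgaussian tail for the upper one) and then guarantee $\theta=|t\bar\xi|/(2\pi)<\LCD_{\kappa,\gamma}(\Bx)$ by shrinking the Esseen window by the $\xi$-dependent factor $2\pi/c_1$; equivalently, run your argument at scale $\eps'=\max(1,c_1/(2\pi))\,\eps$ and use the monotonicity $\P(|S-x|\le\eps)\le\P(|S-x|\le\eps')$. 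This only changes implied constants, which the statement explicitly allows to depend on $\xi$. With these two corrections your proof goes through and coincides with the cited one.
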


Second, vectors with small LCD can be well approximated by rational vectors $\Bp/\|\Bp\|$ with $\Bp\in \Z^n$ and $\|\Bp\|$ small.

\begin{theorem}\label{theorem:net}
Let $D\ge c \sqrt{n}$. Then the set $\{\Bx\in S^{n-1}: c\sqrt{n} \le \LCD_{\kappa,\gamma}(\Bx) \le D \}$ has a $(2\kappa/D)$-net $\CN_D$ of cardinality at most 

$$(C_0D/\sqrt{n})^n \log_2 D,$$

for some absolute constant  $C_0$.
\end{theorem}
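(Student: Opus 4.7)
The plan is a two-scale net construction: a coarse rational approximation of each $\Bx$ supplied by the $\LCD$ definition, followed by a local volume-net refinement, the whole being organized dyadically according to the size of $\LCD_{\kappa,\gamma}(\Bx)$. The key balance is that a lattice direction $\Bp/\|\Bp\|$ at scale $\theta \approx D_k$ costs $(D_k/\sqrt{n})^n$ choices but only approximates $\Bx$ within error $\kappa/D_k$; to reach the target spacing $2\kappa/D$ one pays an extra $(D/D_k)^n$ factor in a local refinement, and these two factors cancel so that the per-scale count is independent of the scale. The logarithmic factor then comes purely from the number of dyadic levels.

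First I would partition the level set as $\bigcup_k S_k$, where $S_k := \{\Bx \in S^{n-1} : D_k \le \LCD_{\kappa,\gamma}(\Bx) < 2D_k\}$ and $D_k := c\sqrt{n}\cdot 2^k$, ranging over $K = O(\log_2 D)$ dyadic values. For $\Bx \in S_k$ the $\LCD$ definition provides, up to an arbitrarily small perturbation if the infimum is not attained, some $\theta \in [D_k, 2D_k]$ and $\Bp \in \Z^n$ with $\|\theta\Bx - \Bp\| \le \kappa$. Since $\|\Bx\|=1$ this forces $|\theta - \|\Bp\|| \le \kappa$, and a short triangle exchange yields
$$
\Big\|\Bx - \frac{\Bp}{\|\Bp\|}\Big\| \;\le\; \frac{\|\theta\Bx - \Bp\|}{\theta} + \frac{|\theta - \|\Bp\||}{\theta} \;\le\; \frac{2\kappa}{D_k}.
$$
Hence $\Bx$ lies in the Euclidean ball $B_\Bp := B(\Bp/\|\Bp\|,\, 2\kappa/D_k)$. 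The admissible $\Bp$ are lattice points of norm at most $2D_k + \kappa$, whose number is at most $(C D_k/\sqrt{n})^n$ by the standard volume count (valid because $D_k \ge c\sqrt{n}$).

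Inside each ball $B_\Bp$ I would then take an ordinary volume-based $(2\kappa/D)$-net, whose cardinality is at most $(3D/D_k)^n$. Multiplying, the level $k$ contributes at most $(C D_k/\sqrt{n})^n \cdot (3D/D_k)^n = (C_0 D/\sqrt{n})^n$ net points, independent of $k$; summing over the $O(\log_2 D)$ levels gives the claimed bound. The main subtle points I anticipate are (i) the triangle exchange turning $\Bp/\theta$ into the unit direction $\Bp/\|\Bp\|$, which goes through cleanly because $\|\Bx\|=1$ forces $\|\Bp\|$ to agree with $\theta$ up to $\kappa$; (ii) the lattice-point count in $B(0, O(D_k))$, for which the hypothesis $D_k \ge c\sqrt{n}$ is precisely what makes the volume bound sharp; and (iii) the infimum-versus-minimum issue in the definition of $\LCD$, which is handled by an arbitrarily small perturbation of $\theta$.
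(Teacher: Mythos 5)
Your proposal is correct and follows essentially the same route as the paper: a dyadic decomposition into level sets $S_{D_0}=\{\Bx: D_0\le \LCD_{\kappa,\gamma}(\Bx)\le 2D_0\}$, the rational approximation $\Bx\approx \Bp/\|\Bp\|$ with $\Bp\in\Z^n\cap B(0,O(D_0))$ coming straight from the LCD definition (this is exactly the content of Lemma~\ref{lemma:D_0}), a lattice-point count of order $(CD_0/\sqrt{n})^n$, and then a subdivision of each coarse ball into a $(2\kappa/D)$-net costing $(CD/D_0)^n$, with the $\log_2 D$ factor coming from the union over dyadic scales. The cancellation $(D_0/\sqrt{n})^n\cdot(D/D_0)^n=(D/\sqrt{n})^n$ you highlight is precisely the mechanism behind the paper's statement that one may subdivide the level-set nets and take the union over powers of two.
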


To show this result, if suffices to establish nets for the level sets  $S_{D_0}:=\{x\in S^{m-1}: D_0 \le \LCD_{\kappa,\gamma}(x) \le 2D_0 \}$. 

\begin{lemma}\cite[Lemma 4.7]{rv-rec}\label{lemma:D_0}
There exists a $(2\kappa/D_0)$-net of $S_{D_0}$ of cardinality at most $(C_0D_0/\sqrt{n})^n$, where $C_0$ is an absolute constant.
\end{lemma}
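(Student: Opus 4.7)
The plan is the standard integer-point approximation argument from Rudelson--Vershynin: since every $\Bx\in S_{D_0}$ is, by definition of the LCD, close (after scaling by some $\theta \asymp D_0$) to an integer vector $\Bp$, we can use the unit vectors $\Bp/\|\Bp\|$ as the net, and then the cardinality bound reduces to counting integer points in a ball of radius $O(D_0)$ in $\R^n$.

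Concretely, I would proceed as follows. Fix $\Bx\in S_{D_0}$. Since $\LCD_{\kappa,\gamma}(\Bx)\in[D_0,2D_0]$, take any $\theta$ just above this infimum; then $\theta\in[D_0,2D_0+o(1)]$ and there is $\Bp\in\Z^n$ with
\[
\|\theta\Bx-\Bp\|\le \min(\gamma\theta,\kappa)\le \kappa.
\]
Since $\|\theta\Bx\|=\theta$, the triangle inequality gives $\|\Bp\|\le \theta+\kappa\le 3D_0$ (using $\kappa\le D_0$, which is safe because $D_0\ge c\sqrt n$). Dividing by $\theta$ yields $\|\Bx-\Bp/\theta\|\le \kappa/\theta\le \kappa/D_0$, and since $|\,\|\Bp\|/\theta-1|\le \kappa/\theta$ we also have $\|\Bp/\theta-\Bp/\|\Bp\|\,\|=|1-\|\Bp\|/\theta|\le \kappa/D_0$. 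Combining these gives the key approximation bound
\[
\Big\|\Bx-\frac{\Bp}{\|\Bp\|}\Big\|\le \frac{2\kappa}{D_0}.
\]
So $\CN_{D_0}:=\bigl\{\Bp/\|\Bp\|:\Bp\in\Z^n,\ 0<\|\Bp\|\le 3D_0\bigr\}$ is a $(2\kappa/D_0)$-net of $S_{D_0}$.

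It remains to bound $|\CN_{D_0}|$ by the number of nonzero lattice points in the Euclidean ball of radius $3D_0$. Each such lattice point sits in a unit cube inside the enlarged ball of radius $3D_0+\sqrt n/2$, so their number is at most $\Vol(B(0,3D_0+\sqrt n/2))$; using the Stirling estimate $\Vol(B(0,R))\le (C R/\sqrt n)^n$ and the hypothesis $D_0\ge c\sqrt n$ (so the $\sqrt n/2$ term is absorbed into the constant), this gives $|\CN_{D_0}|\le (C_0 D_0/\sqrt n)^n$, as claimed.

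Nothing in this argument is genuinely hard; the only minor subtlety is that the LCD is defined as an infimum, so I would write the extraction of $\theta$ carefully (either allow $\theta$ to be slightly above the infimum and absorb the negligible excess into the constant $C_0$, or note that the infimum is in fact attained whenever it is finite because the distance function of $\Z^n$ is $1$-Lipschitz). Everything else is routine bookkeeping of constants, and the result follows without needing the $\gamma\theta$ half of the LCD inequality---only the uniform bound by $\kappa$ is used.
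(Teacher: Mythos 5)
Your argument is correct and is essentially the paper's own proof: approximate each $\Bx\in S_{D_0}$ by $\Bp/\|\Bp\|$ with $\Bp\in\Z^n\cap B(0,3D_0)$, bound the error by $2\kappa/D_0$ via the triangle inequality, and count lattice points volumetrically. The only differences are cosmetic (you handle the infimum in the LCD definition slightly more carefully and spell out the lattice-point count that the paper cites as known).
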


Subdividing these nets into $(2\kappa/D)$-nets and taking the union as $D_0$ ranges over powers of two, we thus obtain Theorem \ref{theorem:net}. As the proof of Lemma \ref{lemma:D_0} is short and uses the important notion of $\LCD$, we include it here for the reader's convenience.

\begin{proof}(of Lemma \ref{lemma:D_0})
For $x\in S_{D_0}$, denote

$$D(x):= \LCD_{\kappa,\gamma}(x).$$

By definition, $D_0\le D(x)\le 2D_0$ and there exists $p\in \Z^m$ with 

$$\left\|x -\frac{p}{D(x)}\right\| \le \frac{\kappa}{D(x)} =O\left(\frac{n^{2c}}{n^{1-c}}\right) =o(1).$$ 

As $\|x\|=1$, this implies that $\|p\| \approx D(x)$, more precisely

\begin{equation}\label{eqn:LCD:net:1}
1- \frac{\kappa}{D(x)} \le \left\|\frac{p}{D(x)}\right\| \le  1+ \frac{\kappa}{D(x)}.
\end{equation}

This implies that 

\begin{equation}\label{eqn:LCD:net:2}
\|p\| \le (1+o(1)) D(x) <3D_0.
\end{equation}

It also follows from \eqref{eqn:LCD:net:1} that

\begin{equation}\label{eqn:LCD:net:3}
\left\|x -\frac{p}{\|p\|}\right\| \le \left\|x -\frac{p}{D(x)}\right\| + \left\| \frac{p}{\|p\|}(\frac{\|p\|}{D(x)} -1)\right\| \le 2\frac{\kappa}{D(x)} \le \frac{2\kappa}{D_0}.
\end{equation}

Now set 

$$\CN_0:=\left\{\frac{p}{\|p\|}, p\in \Z^m \cap B(0,3D_0)\right\}.$$

By \eqref{eqn:LCD:net:2} and \eqref{eqn:LCD:net:3}, $\CN_0$ is a $\frac{2\kappa}{D_0}$-net for $S_{D_0}$. On the other hand, it is known that the size of $\CN_0$ is bounded by $(C_0\frac{D_0}{\sqrt{m}})^m$ for some absolute constant  $C_0$.
\end{proof}

As Theorem \ref{theorem:ball} and Theorem \ref{theorem:net} suggest, we will choose $\CS$ to be the collection of unit vectors with large LCD: for $\gamma=1/2, \kappa=n^c$ and $D =\exp(n^c)$ with a sufficiently small constant $c$ to be chosen we set

\begin{equation}\label{eqn:CS}
\CS :=\{\Bx\in S^{n-1}, \LCD_{\gamma,\kappa}(\Bx) \ge D\}.
\end{equation}

We next show that this set contains "most" of the vectors of $S^{n-1}$. 

\begin{lemma}\label{lemma:dim1:dense} With $\kappa =n^c$ with some $c<1/6$ we have 
$$\Vol(S^{n-1}\backslash \CS) \le  \Vol_{n-1} (\BB(0, n^{-1/2+5c}))$$
\end{lemma}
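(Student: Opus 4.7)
My plan is to decompose $S^{n-1}\backslash \CS$ according to the size of $\LCD_{\gamma,\kappa}$, apply the covering estimate of Lemma \ref{lemma:D_0} to each dyadic level, and sum the resulting $(n-1)$-dimensional area bounds.

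First I would write
\[
S^{n-1}\backslash \CS \;\subseteq\; \bigcup_{k\colon 2^k<D} S_{2^k},\qquad S_{D_0}:=\{\Bx\in S^{n-1}\colon D_0\le \LCD_{\gamma,\kappa}(\Bx)<2D_0\},
\]
so that it suffices to estimate $\Vol(S_{D_0})$ for each dyadic $D_0$ up to $D=\exp(n^c)$. For $D_0\ge c_0\sqrt{n}$ (the regime in which Lemma \ref{lemma:D_0} is non-trivial), the lemma yields a $(2\kappa/D_0)$-net of $S_{D_0}$ of cardinality at most $(C_0 D_0/\sqrt{n})^n$. Since a Euclidean ball of radius $r\le 1$ intersects $S^{n-1}$ in a spherical cap of $(n-1)$-Hausdorff measure at most $\omega_{n-1}r^{n-1}$, I would deduce
\[
\Vol(S_{D_0}) \;\le\; \Bigl(\tfrac{C_0 D_0}{\sqrt{n}}\Bigr)^{n}\cdot \omega_{n-1}\Bigl(\tfrac{2\kappa}{D_0}\Bigr)^{n-1} \;=\; C_1^n\, \omega_{n-1}\, \tfrac{D_0\,\kappa^{n-1}}{n^{n/2}}.
\]

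Second, I would sum over dyadic $D_0\in[c_0\sqrt{n},D]$. Because the level-set bound grows linearly in $D_0$, the geometric sum is dominated (up to a harmless factor $\log D=n^c$) by the top term $D_0\asymp D$, giving
\[
\Vol\Bigl(\bigcup_{D_0\ge c_0\sqrt{n}} S_{D_0}\Bigr) \;\le\; C_2^n\,\omega_{n-1}\,\tfrac{D\,\kappa^{n-1}}{n^{n/2}} \;=\; C_2^n\,\omega_{n-1}\,\exp(n^c)\,n^{c(n-1)-n/2}.
\]

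Third, I would compare this with the desired upper bound $\Vol_{n-1}(\BB(0,n^{-1/2+5c}))=\omega_{n-1}\,n^{(n-1)(-1/2+5c)}$. Forming the ratio, the $\omega_{n-1}$'s cancel and the exponents combine to leave $C_2^n\exp(n^c)\,n^{-4c(n-1)-1/2}$. For any fixed $c>0$ the factor $n^{-4c(n-1)}=\exp(-4c(n-1)\log n)$ decays super-exponentially in $n$, easily absorbing both the exponential $C_2^n$ and the much slower $\exp(n^c)$, so the ratio is $o(1)$ as $n\to\infty$; in particular the claimed inequality holds under the hypothesis $c<1/6$ once $n$ is large enough.

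The main obstacle is the tiny-LCD regime $\{\LCD_{\gamma,\kappa}<c_0\sqrt{n}\}$, for which Lemma \ref{lemma:D_0} is vacuous because its covering number $(C_0 D_0/\sqrt{n})^n$ drops below one. To control it, one must separately argue that any such $\Bx$ lies in a spherical neighborhood of $\Bp/\|\Bp\|$ for some integer vector $\Bp$ of norm $\lesssim \sqrt{n}$, enumerate the $O(\sqrt{n})^n$ such lattice directions, and bound each cap's surface measure via the sub-Gaussian concentration of the first coordinate of a uniform $S^{n-1}$-vector (which has variance $1/n$). The resulting contribution is exponentially small in $n$ and is absorbed into the main estimate above.
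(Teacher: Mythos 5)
Your overall plan---dyadically decompose the complement of $\CS$ into the level sets $S_{D_0}$, invoke Lemma \ref{lemma:D_0} to produce $(2\kappa/D_0)$-nets, multiply cardinality by cap volume, and sum---is exactly the paper's approach; the paper simply invokes the assembled statement of Theorem \ref{theorem:net} in one step instead of repeating the dyadic summation. The bookkeeping in your first and second steps ($\Vol(S_{D_0})\le C_1^n\omega_{n-1}D_0\kappa^{n-1}n^{-n/2}$, then sum to get $C_2^n\omega_{n-1}D\kappa^{n-1}n^{-n/2}$, then compare exponents) is consistent with what the paper does, up to harmless constants.

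The issue is the ``tiny-LCD regime'' that you flag in your last paragraph. You are right that the paper's displayed chain of inequalities begins by writing $\Vol(S^{n-1}\backslash\CS)\le\Vol_{n-1}(\CN_D+\BB(0,\kappa/D)\cap S^{n-1})$, which silently assumes that the $(2\kappa/D)$-neighborhood of $\CN_D$ already covers every vector with $\LCD<D$; Theorem \ref{theorem:net} only guarantees this for vectors with $\LCD\ge c\sqrt{n}$, so there is a real gap here. But the remedy you propose does not close it. First, the count of admissible lattice directions $\Bp/\|\Bp\|$ with $\|\Bp\|\lesssim\sqrt{n}$ is not $O(\sqrt{n})^n$; the number of nonzero integer points in $\BB(0,O(\sqrt{n}))$ is $\Theta(1)^n$, and for $\|\Bp\|\lesssim n^c$ it is governed by a $\binom{n}{O(n^{2c})}$-type quantity, not $\omega_n R^n$. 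Second, and more seriously, the caps one gets in this regime have Euclidean radius of order $\gamma$ (not of order $\kappa/D$), because for $\theta\le\kappa/\gamma$ the constraint in the definition of $\LCD$ is $\dist(\theta\Bx,\Z^n)<\gamma\|\theta\Bx\|$, which only pins $\Bx$ to within $\gamma$ of $\Bp/\theta$. With the paper's choice $\gamma=1/2$, a single such cap (e.g.\ around $\Be_1$: every $\Bx\in S^{n-1}$ with $\langle\Bx,\Be_1\rangle>\sqrt{1-\gamma^2}=\sqrt{3}/2$ satisfies $\LCD(\Bx)\le 1/\langle\Bx,\Be_1\rangle<2$, hence $\Bx\notin\CS$) already has surface measure of order $2^{-n}\cdot\Vol(S^{n-1})$, which is \emph{larger} than the target $\Vol_{n-1}(\BB(0,n^{-1/2+5c}))\approx n^{-(1/2-5c)(n-1)}\omega_{n-1}$. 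So the sub-Gaussian cap estimate (which gives precisely this order $e^{-cn}$) cannot be ``absorbed into the main estimate''; the main estimate is super-exponentially smaller. This is a gap in your proof, and in fact it shows that the low-LCD regime cannot be controlled at the claimed scale with $\gamma=1/2$: a careful treatment would need $\gamma$ to shrink with $n$ (or a weaker right-hand side in the lemma), a point that the paper's own proof also elides.
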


\begin{proof}(of Lemma \ref{lemma:dim1:dense}) Let $\CN_D$ be one of the sets obtained from Theorem \ref{theorem:net}. Then by definition  
 
\begin{align*}
\Vol(S^{n-1}\backslash \CS) & \le \Vol_{n-1}(\CN_D + \BB(0,\kappa/D) \cap S^{n-1})   \le \frac{\pi^{(n-1)/2}}{\Gamma((n-1)/2)} (\kappa/D)^{n-1} |\CN_D|\\ 
&\le  \frac{\pi^{(n-1)/2}}{\Gamma((n-1)/2)} (C\kappa/D)^{n-1} \times (C_0D/\sqrt{n})^n \log_2 D\\
& \le   \frac{\pi^{(n-1)/2}}{\Gamma((n-1)/2)} (CC_0 \kappa/\sqrt{n})^n D^2 \\
& \le  \frac{\pi^{(n-1)/2}}{\Gamma((n-1)/2)} (n^{-1/2+4c})^n \\
& \le \Vol_{n-1} (\BB(0, n^{-1/2+5c})). 
\end{align*}
\end{proof}

Lemma \ref{lemma:dim1:dense} implies that for any $n^{-1/2+5c} \le \eps \le 1$, there exists a $\eps$-net $\CN_{start}$ of $S^{n-1}$ with size $(C/\eps)^n$ that belongs to $\CS$. This set $\CN_{start}$ will be the collection of our starting vectors $\Bx_0$ discussed in Section \ref{section:method}.

Now we will proceed to our main result of Step 1. For this, we will find the following lemma useful.

\begin{lemma}\cite[Lemma 2.2]{rv}\label{lemma:tensorization}  Let $\zeta_1,\dots,\zeta_n$ be independent non-negative random variables, and let $K, t_0 > 0$.  If one has
$$ \P( \zeta_k < t ) \leq K t$$
for all $k=1,\dots,n$ and all $t \geq t_0$, then one also has
$$ \P( \sum_{k=1}^n \zeta_k^2 < t^2 n ) \leq O((K t)^n)$$
for all $t \geq t_0$.
\end{lemma}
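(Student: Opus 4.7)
The plan is to establish this via the classical exponential Markov (Laplace transform) trick, which turns a one-dimensional small-ball estimate into a multi-dimensional one by exploiting independence. The starting observation is that the event $\{\sum_k \zeta_k^2 < t^2 n\}$ coincides with $\{e^{-\sum_k \zeta_k^2/t^2} > e^{-n}\}$, so Markov's inequality together with independence yields
\[
\P\Big(\sum_{k=1}^n \zeta_k^2 < t^2 n\Big) \;\le\; e^{n} \, \E\Big[e^{-\sum_k \zeta_k^2/t^2}\Big] \;=\; e^{n} \prod_{k=1}^n \E\big[e^{-\zeta_k^2/t^2}\big].
\]
The task thereby reduces to the single-variable estimate $\E[e^{-\zeta_k^2/t^2}] = O(Kt)$; once that is in hand the product is $O((Kt)^n)$ and the spurious factor $e^n$ is absorbed into the implicit constant.

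For the single-variable bound I would write the expectation in layer-cake form
\[
\E\big[e^{-\zeta_k^2/t^2}\big] \;=\; \int_0^\infty \frac{2u}{t^2}\, e^{-u^2/t^2}\, \P(\zeta_k < u)\, du,
\]
and then bound $\P(\zeta_k < u)$ by a piecewise argument: on $u \ge t_0$ the hypothesis gives $\P(\zeta_k < u) \le Ku$, while on $u < t_0$ monotonicity combined with the hypothesis at $t=t_0$ gives $\P(\zeta_k < u) \le \P(\zeta_k < t_0) \le K t_0 \le K t$ (using $t \ge t_0$). In both regimes the bound is dominated by $K(u+t)$, and substituting $v = u/t$ collapses the remaining integral to $Kt \int_0^\infty (2v^2 + 2v)\, e^{-v^2}\, dv = O(Kt)$.

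The only mildly delicate point is the sub-$t_0$ regime, where the small-ball hypothesis does not apply directly; the remedy above, evaluating the hypothesis at $t_0$ and using $t_0 \le t$, costs only a constant factor and is the step I expect to be the main (though small) obstacle. Combining the pieces gives $\prod_k \E[e^{-\zeta_k^2/t^2}] \le (CKt)^n$ for some absolute $C$, and the Markov step delivers the claimed estimate $\P(\sum_k \zeta_k^2 < t^2 n) \le (eCKt)^n = O((Kt)^n)$.
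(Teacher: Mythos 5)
Your proof is correct and is essentially the same argument as in the cited source (Rudelson--Vershynin, Lemma 2.2): the exponential Markov bound $\P(\sum_k \zeta_k^2 < t^2 n) \le e^n \prod_k \E e^{-\zeta_k^2/t^2}$ followed by the layer-cake/integration-by-parts computation of $\E e^{-\zeta_k^2/t^2}$ and the split at $u = t_0$ to handle the sub-$t_0$ regime. The paper itself cites this lemma without reproducing the proof, so there is no in-paper argument to compare against beyond the reference.
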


For our analysis below we recall the definition of $\CS$ from \eqref{eqn:CS}.

\begin{theorem}[key estimate, stability of non-structures]\label{theorem:stable} Assume that $A=(a_{ij})_{1\le i,j\le n}$ is a random matrix of size $n$ whose entries are iid copies of $\xi$ satisfying \eqref{eqn:subgaussian}. Let  $\Bx=(x_1,\dots,x_n)$ be any deterministic vector from $\CS$. Then 
$$\P_A(\frac{A\Bx}{\|A \Bx\|} \notin \CS) \le n^{-cn/8}.$$
\end{theorem}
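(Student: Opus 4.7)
My strategy is to translate the bad event $\LCD(A\Bx/\|A\Bx\|) < D$ into the geometric event that $A\Bx$ lies close to a finite net of low-LCD directions on $S^{n-1}$ provided by Theorem \ref{theorem:net}, and then apply a union bound together with a tensorized small-ball estimate for $A\Bx$. The crucial input is that, since $\Bx$ has LCD at least $D$, each coordinate $(A\Bx)_i$---equal to $n^{-1/2}$ times a weighted sum of iid copies of $\xi$ with weights $x_1,\dots,x_n$---behaves like a random variable of bounded density: Theorem \ref{theorem:ball} yields $\P(|(A\Bx)_i - a| \leq s) = O(s\sqrt{n} + e^{-\Theta(\kappa^2)})$ uniformly in $a \in \R$, valid for $s \geq 1/(D\sqrt{n})$.

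Combining this coordinate estimate with the row-independence of $A$ via Lemma \ref{lemma:tensorization} yields two consequences, each with failure probability $e^{-\Omega(n)}$: the norm bound $c_0 \leq \|A\Bx\| \leq C_0$ (the upper half being sub-Gaussian concentration of $\|A\Bx\|^2$ about its mean $1$), and the ball estimate $\P(A\Bx \in B(\Bv, r)) \leq (Cr)^n$ for any $\Bv \in \R^n$ and $r \geq 1/D$.

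Next, apply Theorem \ref{theorem:net} to obtain a $(2\kappa/D)$-net $\CN_D$ for $\{\Bu \in S^{n-1} : c\sqrt{n} \leq \LCD(\Bu) \leq D\}$ of cardinality at most $(C_0 D/\sqrt{n})^n \log_2 D$; unit vectors with $\LCD < c\sqrt{n}$ form a highly structured set handled by a coarser, separate covering in the spirit of Lemma \ref{lemma:dim1:dense}. On the good norm event, $A\Bx/\|A\Bx\| \notin \CS$ forces $A\Bx$ into a tube of radius $O(\kappa/D)$ around the segment $\{r \Bu' : r \in [c_0, C_0]\}$ for some $\Bu' \in \CN_D$; covering this segment by $O(D/\kappa)$ balls of radius $O(\kappa/D)$ and applying the ball estimate gives the per-net-point bound $O(D/\kappa) \cdot (C\kappa/D)^n = O(1)^n \kappa^{n-1}/D^{n-1}$. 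A final union bound over $\CN_D$ yields
$$\P(A\Bx/\|A\Bx\| \notin \CS) \lesssim (C_0 D/\sqrt{n})^n \log_2 D \cdot O(1)^n \kappa^{n-1}/D^{n-1} = C^n e^{O(n^c)} n^{-(1/2-c)n},$$
which is $\leq n^{-cn/8}$ for all sufficiently small $c$, using $\kappa = n^c$ and $D = e^{n^c}$.

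The principal obstacle I anticipate is making the tensorized ball estimate sharp enough to absorb both the net cardinality $(D/\sqrt{n})^n$ and the $D/\kappa$-factor from covering the radial segment simultaneously. The delicate cancellation between these combinatorial sources leaves behind the clean gain $(\kappa/\sqrt{n})^n = n^{-(1/2-c)n}$, whose smallness is the fundamental source of the decay rate; this is precisely what forces $\kappa$ to be polynomially smaller than $\sqrt{n}$ and, in turn, dictates how large $D$ may be taken in the definition \eqref{eqn:CS} of $\CS$.
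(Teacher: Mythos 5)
Your plan follows the same route as the paper (translate $\LCD(A\Bx/\|A\Bx\|)\le D$ into proximity of $A\Bx$ to dilates of the net $\CN_D$ from Theorem \ref{theorem:net}, bound each proximity event by the tensorized small-ball estimate coming from Theorem \ref{theorem:ball} and Lemma \ref{lemma:tensorization}, then union bound), but there is a genuine quantitative gap in how you localize $\|A\Bx\|$. The theorem claims the super-exponential bound $n^{-cn/8}$, whereas your two conditioning events $c_0\le\|A\Bx\|\le C_0$ (constant thresholds) are only controlled with failure probability $e^{-\Omega(n)}$, and these must be added to your final estimate. For the upper threshold this is not a defect of the proof that can be repaired later: already for gaussian $\xi$ one has $\P(\|A\Bx\|\ge C_0)=e^{-\Theta(n)}$ in your normalization, so with a constant $C_0$ the total bound you can prove is $e^{-\Omega(n)}$, which is strictly weaker than $n^{-cn/8}$ and would degrade the $Nn^{-cn}$ loss in part (1) of Theorem \ref{theorem:main} to $Ne^{-\Omega(n)}$. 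The paper avoids this by taking an $n$-dependent window: in your scaling, $n^{-c/4}\le\|A\Bx\|\le n^{1/2-2c}$. The lower event then costs $(Cn^{-c/4})^n$ by exactly your ball estimate, the upper event costs $\exp(-\Omega(n^{2-4c}))$ by \eqref{eqn:Bernstein} (super-exponentially small because the threshold grows polynomially), and the tube/net computation still closes: the tube radius becomes $O(r\kappa/D)$ with $r\le n^{1/2-2c}$, so the per-point term is $(Cr\kappa/(D\sqrt n))^n$ and, after multiplying by $|\CN_D|$ and the $e^{O(n^c)}$ factors from the radial discretization and $\log_2 D$, the main term is $(Cn^{-c})^n$. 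In other words, your ``clean gain $(\kappa/\sqrt n)^n$'' is an artifact of the too-narrow norm window; the correct, and sufficient, gain is $(n^{-c})^n$.

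A secondary point: your disposal of the regime $\LCD(A\Bx/\|A\Bx\|)<c\sqrt n$ by ``a coarser covering in the spirit of Lemma \ref{lemma:dim1:dense}'' does not work as stated, because that lemma is a volume estimate on $S^{n-1}$ and $A\Bx/\|A\Bx\|$ is far from uniformly distributed, so small volume does not transfer to small probability. What does work is to run the same argument there: such a vector is within distance $O(\kappa/\theta)$ of a rational direction $\Bp/\|\Bp\|$ with $\|\Bp\|=O(\sqrt n)$, and the small-ball bound at that radius beats the count of such integer points. (The paper is admittedly terse on this regime as well, since Theorem \ref{theorem:net} only covers $\LCD\ge c\sqrt n$, but your proposal should not lean on a volume bound here.) Finally, note the statement of Theorem \ref{theorem:stable} takes the entries of $A$ to be unnormalized copies of $\xi$; this is harmless since the event is scale-invariant, but your bookkeeping should be stated consistently in one normalization.
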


\begin{proof}(of Theorem \ref{theorem:stable}) We first consider the event $\CE_1$ that $\|Ax\|^2 \le n^{1-c/2}$. As $\LCD_{\gamma, \kappa}(\Bx)\ge D = \exp(n^c) \gg \sqrt{n}$, by Theorem \ref{theorem:ball} 

$$\P(|a_{i1}x_1+\dots+a_{in} x_n| \le n^{-c/4}) =O(n^{-c/4}).$$

Thus by Lemma \ref{lemma:tensorization}

\begin{equation}\label{eqn:E1:stable}
\P(\CE_1) =\P(\|A\Bx\|^2 \le n^{1-c/2}) \le C^n n^{-cn/4}.
\end{equation}

Now for the event $\CE_2$ that $\|A\Bx\| \ge n^{1-2c}$, by  the standard Chernoff deviation result (as $\|A\Bx\|^2 =\sum_i (\sum_j a_{ij} x_j)^2$) we have

\begin{equation}\label{eqn:E3:stable}
\P(\|A\Bx\| \ge n^{1-2c}) \le \exp(-n^{1-4c}).
\end{equation}

On the complement of $\CE_1$ and $\CE_2$, for each $n^{1/2-c/4} \le r \le n^{1-2c}$ let us look at the event $\CE_r$ that $\|A\Bx-\By\| =O( r \kappa/D)$ for some vector $\By=(y_1,\dots,y_n)$ of norm $\|\By\|=r$ in the net $r \cdot \CN_D$ (with $\CN_D$ obtained from Theorem \ref{theorem:net}). Clearly this covers the event that $\LCD (Ax/\|Ax\|) \le D$ and $r-n^{-c}\le \|Ax\| \le r+n^{-c}$. 

Again, as $\LCD(\Bx)\ge D$ and that $r \kappa \ge n^{1/2 +3c/4} > n^{1/2}$, by Theorem \ref{theorem:ball}

\begin{align*}
\P(|a_{i1}x_1+\dots+a_{in} x_n - y_i| \le \frac{r \kappa}{D\sqrt{n}}) = O(\frac{r\kappa}{D\sqrt{n}}).
\end{align*}

Thus by Lemma \ref{lemma:tensorization},

\begin{align*}
 \P(\|A\Bx-\By\|^2 \le \frac{r^2 \kappa^2}{D^2})  \le (\frac{C r \kappa}{D\sqrt{n}})^n.
\end{align*}

We have thus obtained (taking into account of the size of $\CN_D$ from Theorem \ref{theorem:net})

\begin{align*}
\P\Big (\exists \By\in r \cdot \CN_D, \|A\Bx-\By\| \le O(r\kappa/D), \LCD (A\Bx/\|A\Bx\|) \le D \Big) & \le |\CN_D|  (\frac{C r\kappa}{D\sqrt{n}})^n  \le D^2 (\frac{C r \kappa}{n})^{n}\\
&\le n^{-c n},
\end{align*}

where we used the assumption $D = \exp(n^c)$ and $r \le n^{1-2c}$. 

Let $\CE_3$ be the event that $\LCD(A\Bx/\|A \Bx\|) \le D$ and $n^{1/2-c/4} \le \|A\Bx\| \le n^{1-2c}$. Then by taking any $\kappa/D^{O(1)}$-net $\{r_1,\dots, r_m\}$ of the segment $n^{1/2-c/4}\le r \le n^{1-2c}$ we have

\begin{align}\label{eqn:E2:stable}
\P(\CE_3) &\le \P\Big (\exists i, \exists \By \in r_i \cdot \CN_{D}, \|A\Bx-\By\|\le O(r_i\kappa/D), \LCD (A\Bx/\|A\Bx\|) \le D\Big) \nonumber \\
&\le ((n^{1-2c}-n^{1/2-c/4})D/\kappa) n^{-cn} \nonumber \\ 
&\le n^{-cn/2}.
\end{align}

The proof is complete by \eqref{eqn:E1:stable},  \eqref{eqn:E3:stable} and \eqref{eqn:E2:stable}.
\end{proof}

\section{Step 2 for (1) of Theorem \ref{theorem:main}: concentration of magnitude over non-structured vectors}\label{section:step2} 
Recall that
\begin{align*}
\frac{1}{N}\log \|B_N\Bx_0 \| = \frac{1}{N} \sum_{i=0}^{N-1} \log \|A_{i+1}\Bx_i \|,
\end{align*}

where 

$$\Bx_i =  \frac{A_{i} \dots A_2 A_1 \Bx_0}{\|A_{i} \dots A_2 A_1 \Bx_0\|}.$$

By Theorem \ref{theorem:stable}, we can assume that $\Bx_i \in \CS$ (i.e. $\LCD_{\gamma,\kappa}(\Bx_i) \ge D$) for all $1\le i\le N$ with a loss of $N \exp(-cn/8)$ in probability. 

In this section we study the concentration of $\log \|A_{i+1}\Bx_i\|$ around its zero mean (here again the randomness is with respect to $A_{i+1}$, conditioning on all $A_1,\dots, A_i$.)

Let $\Bx=(x_1,\dots,x_n)$ be a vector in $\CS$. Let $A=(a_{ij})_{1\le i\le n}$ be a random square matrix whose entries are iid copies of $\xi$ from \eqref{eqn:subgaussian}.  For short, set $\xi_i := a_{i1}x_1+\dots+ a_{in} x_n$ and 

$$y:=\log (\frac{1}{n}\|A \Bx\|_2^2) = \log (\frac{1}{n}(\xi_1^2+\dots+\xi_n^2)).$$

Before stating our estimates, we note that as $a_{ij}$ are subgaussian random variables of parameter $K$, so are the normalized random variables $\xi_1,\dots, \xi_n$. This implies that $\xi_i^2$ are exponential random variables (since $\P(\xi_i^2 \ge t) =\P(|\xi_i| \ge \sqrt{t})  \le O(\exp(-t/K))$). As a consequence, for any $x\ge 0$ (see for instance \cite[Proposition 5.16]{V-note})

\begin{equation}\label{eqn:Bernstein}
\P(|\xi_1^2+\dots + \xi_n^2-n| \ge nx) \le 2 e^{-c\min\{ nx^2/K^2, nx/K\}}.
\end{equation}

\begin{theorem}[concentration over non-structured vectors] \label{theorem:deviation} We have

\begin{enumerate}[(i)]
\item for any $t>0$,
$$\P(y\ge t) \le e^{-c't^2n}$$
for some absolute constant $c'>0$;
\item for any $0\le t\le 2\log D$
$$\P(y\le -t) \le \min\{(K e^{-t/2})^n,1\}$$
where $K$ is the parameter from \eqref{eqn:subgaussian};
\item for any $t\le O(1)$
$$\P(|y|\ge t) \le e^{-c''t^2n}$$
for some absolute constant $c''>0$.
\end{enumerate}

\end{theorem}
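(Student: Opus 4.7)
My plan is to treat the three parts as successively finer concentration statements for the random variable $S := \xi_1^2 + \dots + \xi_n^2$, whose mean is $n$ since $\E \xi_i^2 = \|\Bx\|^2 = 1$. I will prove (i) and the upper tail in (iii) by applying the subexponential Bernstein bound \eqref{eqn:Bernstein} directly; for the lower tail in (iii) I will apply \eqref{eqn:Bernstein} once more in its small-deviation form; and for the much stronger lower-tail decay demanded by (ii) I will invoke the $\LCD$ anti-concentration of Theorem \ref{theorem:ball} together with tensorization (Lemma \ref{lemma:tensorization}). In particular, the structural hypothesis $\Bx \in \CS$ is used essentially only in part (ii).

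For (i), I rewrite $\{y \ge t\}$ as $\{S - n \ge n(e^t-1)\}$ and apply \eqref{eqn:Bernstein} with $x = e^t - 1$, which yields
$$\P(y \ge t) \le 2\exp\Bigl(-c n \min\{(e^t-1)^2/K^2,\ (e^t-1)/K\}\Bigr).$$
For $0 < t \le 1$ the quadratic term dominates and is $\ge t^2/K^2$ since $e^t - 1 \ge t$. For $t \ge 1$ the linear term dominates; the ratio $(e^t-1)/(Kt^2)$ is continuous and strictly positive on $[1,\infty)$ with limit $+\infty$, hence bounded below by some $c_K > 0$. Combining the two regimes yields $\P(y \ge t) \le e^{-c' t^2 n}$ for every $t > 0$.

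For (ii), the hypothesis $\Bx \in \CS$ gives $\LCD_{\kappa,\gamma}(\Bx) \ge D$, so Theorem \ref{theorem:ball} yields
$$\P(|\xi_i| \le \eps) \le C\eps \quad \text{for every } \eps \ge 1/D,$$
the term $e^{-\Theta(\kappa^2)}$ being negligible with $\kappa = n^c$. Applying Lemma \ref{lemma:tensorization} with $\zeta_i = |\xi_i|$ and threshold $t_0 = 1/D$ then gives $\P(S < s^2 n) \le O((Cs)^n)$ for every $s \ge 1/D$. Taking $s = e^{-t/2}$, which is admissible precisely when $t \le 2\log D$, converts $\P(y \le -t) = \P(S \le n e^{-t})$ into $(K e^{-t/2})^n$; the $\min\{\cdot, 1\}$ in the statement merely absorbs the trivial regime $t < 2\log K$.

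For (iii), the upper tail is immediate from (i). For the lower tail, since $t = O(1)$ I apply \eqref{eqn:Bernstein} again with $x = 1 - e^{-t} \in (0,1)$, which lies in the quadratic regime; and on any bounded interval $t \in (0, C]$ one has $(1 - e^{-t})^2 \ge c''_K t^2$ (the function $(1-e^{-t})/t$ is continuous on $[0, C]$ with value $1$ at $0$). A union bound with (i) closes (iii). The main technical obstacle I anticipate is elementary but requires care: tracking the quadratic/linear crossover in Bernstein across the entire range $t > 0$ in part (i), and verifying in part (ii) that the tensorization threshold $1/D$ aligns exactly with the admissible range $t \le 2\log D$ — the latter alignment being essential, since Bernstein alone cannot produce the $e^{-\Theta(t n)}$-type decay of the lower tail required by (ii).
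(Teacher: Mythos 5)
Your proposal is correct, and for parts (ii) and (iii) it follows essentially the paper's own argument: in (ii) you combine Theorem \ref{theorem:ball} (with $\gamma$ a constant and $\kappa=n^c$, so the $e^{-\Theta(\kappa^2)}$ term is dominated by any $\eps\ge 1/D$) with the tensorization Lemma \ref{lemma:tensorization} at threshold $t_0=1/D$, which is exactly how the paper gets $(Ke^{-t/2})^n$ on the range $e^{-t/2}\ge 1/D$, i.e.\ $t\le 2\log D$; in (iii) you apply the small-deviation regime of \eqref{eqn:Bernstein} to $\P(z\le e^{-t})$, just as the paper does via $e^{-t}\le 1-\min\{t,1\}/2$. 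The only genuine divergence is in (i): the paper runs a Chernoff argument on $y$ itself, taking $\lambda\asymp tn/K^2$ and bounding the moment $\E z^{\lambda}=\lambda\int_0^\infty x^{\lambda-1}\P(z>x)\,dx$ by splitting the integral into the sub-Gaussian and sub-exponential regimes of \eqref{eqn:Bernstein}, whereas you exponentiate the event once, $\{y\ge t\}=\{z-1\ge e^t-1\}$, and apply \eqref{eqn:Bernstein} directly at $x=e^t-1$. Your route is shorter and avoids the moment computation; the cleanest way to close it is to use $e^t-1\ge\max\{t,\,t^2/2\}$ for all $t\ge 0$, which yields $\min\{(e^t-1)^2/K^2,(e^t-1)/K\}\ge c_K t^2$ in one stroke and spares the discussion of which term of the minimum is active (your split at $t=1$ is slightly inaccurate when $K<e-1$, though the conclusion is unaffected). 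Two cosmetic points, both equally present in the paper's own write-up: the prefactor $2$ in \eqref{eqn:Bernstein} must be absorbed into the exponent, and your constants $c',c''$ depend on $K$, which is consistent with the paper's stated convention that implied constants may depend on the subgaussian parameter.
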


\begin{proof}(of Theorem \ref{theorem:deviation}) For $(i)$, with the parameter $\lambda=ctn/2K^2+1$ 

\begin{equation}\label{eqn:y:1}
\P(y\ge t) = \P(e^{\lambda y} \ge e^{\lambda t}) \le e^{-\lambda t} \E ((\frac{1}{n}(\xi_1^2+\dots +\xi_n^2))^\lambda) =  e^{-\lambda t} \lambda \int_0^{\infty} x^{\lambda-1} \P(z>x) dx,
\end{equation}

where $z := \frac{1}{n} (\xi_1^2+\dots+\xi_n^2)$.  Note that  we can trivially bound 

\begin{equation}\label{eqn:y:2}
\int_0^{1} x^{\lambda-1} \P(z>x) dx\le 1/\lambda.
\end{equation}

For the integral corresponding to $x\ge 1$ we use \eqref{eqn:Bernstein}

$$\int_1^{\infty} x^{\lambda-1} \P(z>x) dx =  \int_0^{\infty} (1+x)^{\lambda-1} \P(z>x+1) dx \le 2 \int_0^{\infty} (1+x)^{\lambda-1}   e^{-c \min\{ nx^2/K^2, nx/K\}} dx.
$$

To this end, 

\begin{equation}\label{eqn:y:3}\int_0^{1} (1+x)^{\lambda-1}   e^{-\frac{c}{K^2} nx^2}  dx\le \int_0^{1}  e^{-\frac{c}{K^2} nx^2 + (\lambda-1)x}dx \le 1 +  e^{ K^2(\lambda-1)^2/cn} \le e^{\lambda t/2}.
\end{equation}

Furthermore, for $t< 2$ we have

\begin{equation}\label{eqn:y:4}
\int_1^{\infty} (1+x)^{\lambda-1}   e^{-\frac{c}{K^2} nx}  dx\le \int_1^{\infty}  e^{x \frac{c}{2K^2}n(t-2)}   dx  =O(1).
\end{equation}

For $t\ge 2$, let $x'$ be such that $(\lambda-1)/n= x'/\log (1+x')$ (thus $x' \asymp t \log t$) then

\begin{equation}\label{eqn:y:5}
\int_1^{\infty} (1+x)^{\lambda-1}   e^{-\frac{c}{K^2} nx}  dx\le \int_1^{x'} (1+x)^{\lambda-1}   dx  +1 \le e^{\lambda t/2}.
\end{equation}

Combining \eqref{eqn:y:1},\eqref{eqn:y:2},\eqref{eqn:y:3}, \eqref{eqn:y:4} and \eqref{eqn:y:5} we obtain 

$$\P(y\ge t) \le e^{-\lambda t/2} \le  e^{-c' t^2n}.$$


For the lower tail (ii),  we recall that $\LCD_{\gamma,\kappa}(\Bx) \ge D$. By Lemma \ref{lemma:tensorization}, for any $t$ such that $e^{-t/2}\ge 1/D$

$$\P(y\le -t) = \P(e^y \le e^{-t}) = \P(\xi_1^2+\dots+\xi_n^2 \le n e^{-t}) \le (K e^{-t/2})^n .$$

For (iii), we need to estimate $\P(y \le -t)$ with $0\le t =O(1)$. We have

\begin{align*}
\P(y\le -t )=\P(z \le e^{-t}) \le \P(z \le 1-\min\{t, 1\}/2) &\le \P(|z-1| \ge \min\{t,1\}/2)\\ 
&\le 2 e^{-\frac{c''}{K^2} n t^2}
\end{align*}

where we used \eqref{eqn:Bernstein} in the last estimate.  

\end{proof}

\section{Step 3 for (1) of Theorem \ref{theorem:main}: concluding the proof}\label{section:step3}

Let $\Bx_0$ be any vector from $\CN_{start}$. We will show

\begin{lemma}\label{lemma:top}
For any $t\ge 1/n$ we have 

$$\P\Big( |\frac{1}{N}  \log \|B_N \Bx_0\| | \ge t \Big) \le \exp(-c\min\{t^2, t\} Nn^{1-2c}) + N n^{-cn}.$$
\end{lemma}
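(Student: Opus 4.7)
The plan is to decompose
\[
\log\|B_N\Bx_0\| \;=\; \sum_{i=0}^{N-1}\log\|A_{i+1}\Bx_i\| \;=\; \tfrac{1}{2}\sum_{i=0}^{N-1} y_i,\qquad y_i := \log\|A_{i+1}\Bx_i\|^2,
\]
with $\Bx_i$ the normalised iterate from \eqref{eqn:x_i}, and then to bound the sum by a martingale Chernoff argument whose single-step input is Theorem \ref{theorem:deviation}. Set $\CF_i := \sigma(A_1,\dots,A_i)$ and let $\CG := \bigcap_{i=1}^{N-1}\{\Bx_i \in \CS\}$. Since $\Bx_0\in\CN_{start}\subset\CS$ is deterministic, iteratively applying Theorem \ref{theorem:stable} conditionally on $\CF_i$ (with $\Bx_i$ playing the role of the deterministic input vector) and taking a union bound gives $\P(\CG^c)\le Nn^{-cn/8}$, which accounts for the additive $Nn^{-cn}$ term in the lemma. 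It remains to bound $\P(|\sum y_i|\ge 2tN,\,\CG)$.

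On $\CG$ the vector $\Bx_i$ is $\CF_i$-measurable and lies in $\CS$, so by Theorem \ref{theorem:deviation} the conditional law of $y_i$ given $\CF_i$ has Gaussian upper tail $\P(y_i\ge s\mid\CF_i)\le e^{-c's^2 n}$ for all $s>0$ and subexponential lower tail $\P(y_i\le -s\mid\CF_i)\le (Ke^{-s/2})^n$ for $0\le s\le 2\log D=2n^c$. Writing $\mu_i := \E[y_i\mid\CF_i]$, a second-order expansion of $\log$ about $z=1$ together with $\E\|A_{i+1}\Bx_i\|^2=1$ and the variance bound $\Var(\|A_{i+1}\Bx_i\|^2)=O(1/n)$ from \eqref{eqn:Bernstein} gives $|\mu_i|=O(1/n)$ on $\CG$; in particular, the drift $|\sum_i\mu_i|=O(N/n)$ is at most $tN$ in the stated range $t\ge 1/n$.

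Integrating these conditional tails produces a uniform MGF bound of Bernstein type,
\[
\E[e^{\lambda(y_i-\mu_i)}\mid\CF_i]\,\oindicator{\Bx_i\in\CS}\;\le\;\exp(C\lambda^2/n),\qquad |\lambda|\le \Lambda,\ \Lambda\asymp n^{1-c},
\]
and iterating the tower property yields $\E\bigl[\exp(\lambda\sum_i(y_i-\mu_i))\,\oindicator{\CG}\bigr]\le \exp(CN\lambda^2/n)$ for $|\lambda|\le \Lambda$. A Chernoff optimisation of $\lambda\in(0,\Lambda]$ then gives
\[
\P\bigl(\bigl|\textstyle\sum_i(y_i-\mu_i)\bigr|\ge tN,\,\CG\bigr)\;\le\;2\exp\bigl(-c\min\{t^2 n,\, t\Lambda\}\cdot N\bigr)\;\le\;2\exp\bigl(-c\min\{t^2,t\}\,Nn^{1-2c}\bigr),
\]
which together with the drift estimate and the bound on $\P(\CG^c)$ completes the proof.

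The main technical obstacle is the MGF estimate, specifically on the negative-$\lambda$ branch. The positive-$\lambda$ branch is immediate: the Gaussian upper tail of Theorem \ref{theorem:deviation}(i) already yields $\E[e^{\lambda(y_i-\mu_i)}\mid\CF_i]\le e^{C\lambda^2/n}$ for all $\lambda\ge 0$, and on its own gives the (stronger) subgaussian bound $\P(\sum(y_i-\mu_i)\ge tN,\CG)\le \exp(-ct^2 Nn)$ for the upper deviation. For $\lambda<0$ one only has the subexponential lower-tail estimate from Theorem \ref{theorem:deviation}(ii), which is valid only for $s\le 2\log D=2n^c$; one therefore truncates the Laplace integral defining $\E e^{-|\lambda|(y_i-\mu_i)}$ at $M=2n^c$, with the truncation error $Ne^{-\Omega(n^{1+c})}$ absorbed into the $Nn^{-cn}$ error term. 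This truncation restricts $|\lambda|$ to at most $\Lambda\asymp n^{1-c}$ and is what ultimately produces the $n^{1-2c}$ factor in the exponent.
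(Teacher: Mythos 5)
Your proposal is correct and follows essentially the same route as the paper: the same decomposition $\log\|B_N\Bx_0\|=\sum_i\log\|A_{i+1}\Bx_i\|$, the same stability event built from Theorem \ref{theorem:stable}, truncation at $2\log D$, centering, and a conditional-MGF (Azuma/Chernoff) iteration driven by Theorem \ref{theorem:deviation}; the only cosmetic difference is that you cap $|\lambda|$ at $n^{1-c}$ and land on the stated $Nn^{1-2c}$, whereas the paper takes $\lambda\asymp tn$ and actually obtains the stronger exponent $Nn$. One small correction: define the centering after truncation, i.e.\ $\mu_i:=\E\big[y_i\mathbf{1}_{\{|y_i|\le 2\log D\}}\mid\CF_i\big]$ rather than $\E[y_i\mid\CF_i]$, since for discrete $\xi$ the untruncated conditional mean need not be finite (e.g.\ $\P(A_{i+1}\Bx_i=0)$ can be positive even for $\Bx_i\in\CS$); with that reordering, which is exactly how the paper proceeds, your drift bound $|\mu_i|=O(1/n)$ and the Bernstein-type MGF estimate go through unchanged.
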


It is clear that Theorem \ref{theorem:main} follows from Lemma \ref{lemma:top} after taking union bound over $\CN_{start}$.

\begin{proof}(of Lemma \ref{lemma:top}) First, by Theorem \ref{theorem:stable}, the event $\CF_1$ that $\Bx_i \in \CS$ for all $1\le i\le N$ holds with probability 

$$\P(\CF_1) \ge 1 - N n^{-cn}.$$

Consider the random sum 

$$S =\frac{1}{N} (y_1+\dots+y_N),$$

where  $\By_i = \log \|A_{i+1} \Bx_i\|$.

Basing on Theorem \ref{theorem:deviation}, the event $\CF_2$ such that $|y_i|\le 2 \log D$ for all $y_i, 1\le i\le N$ satisfies 

$$\P(\CF_2) \ge 1-N D^{-n}.$$

Introduce the new random variables $y_i':= y_i 1_{|y_i| \le 2 \log D}$ and $y_i'' := y_i' - \E_{A_i} y_i'$. As customary, in what follows our probability is with respect to $A_{i}$, conditioning on $A_1,\dots, A_{i-1}$. By (iii) of Theorem \ref{theorem:deviation}, for $|t|=O(1)$ 

\begin{equation}\label{eqn:y':1}
\P(|y_i'| \ge t) \le \P(|y_i|\ge t) \le e^{-ct^2n}.
\end{equation}

Also, by (i) and (ii) of Theorem \ref{theorem:deviation}, for $O(1) \le t\le 2 \log D$

\begin{equation}\label{eqn:y':2}
\P(|y_i'| \ge t) \le \P(|y_i|\ge t) \le K^n e^{-tn/2} + e^{-ct^2n}.
\end{equation}

Consequently,

$$|\E_{A_i} y_i'| \le \int_{0}^{2 \log D} t \P(|y_i'|\ge t) \le O(\int_{0}^{1/\sqrt{n}} t dt) =O(1/n).$$






 Next consider the martingale sum

$$S'':= \frac{1}{N}(y_1''+\dots+y_N'').$$

By definition, $|y_i''|\le 2 \log D +O(1/n)< 3 \log D$. Also by \eqref{eqn:y':1} and \eqref{eqn:y':2}, for $t\ge 1/n$ 

$$\P(|y_i''|\ge t) \le \P(|y_i'|\ge t) \le  \exp(-c\min\{t^2, t\} n).$$ 

This implies the following conditional estimate for $\lambda = c tn$

$$e^{-2\lambda t} \E (e^{\lambda y_i'' }|A_1,\dots,A_{i-1}), e^{-2\lambda t} \E (e^{-\lambda y_i'' }|A_1,\dots,A_{i-1}) \le \exp(-c \min \{t, t^2\}n).$$

Following the proof of Azuma's martingale concentration, for $t\ge 1/n$

\begin{align*}
\P(S''\ge 2t) = \P(y_1''+\dots+y_N'' \ge 2Nt) &=  \P(\exp((y_1''+\dots+y_N'')\lambda) \ge \exp(2\lambda Nt )) \\
&\le  \exp(-2\lambda Nt)\E (\exp((y_1''+\dots+y_N'')\lambda)\\
&\le \exp(-c'\min\{t^2, t\}Nn).
\end{align*}

We also obtain the same bound for $\P(S'' \le -2t)$. Thus, as $\P(|S| \ge 2t) \le \P(|S| \ge2 t | \CF_1 \cap \CF_2) \P(\CF_1 \cap \CF_2) + \P(\bar{\CF_1}  \cup \bar{\CF_2})$, we have

\begin{align*}
\P(|S| \ge 2t +O(1/n)) \le   \P(|S''| \ge 2t) + \P(\bar{\CF_1}  \cup \bar{\CF_2})  &\le \exp(-c\min\{t^2, t\}Nn) +\P(\bar{\CF_1}) + \P(\bar{\CF_2})\\
&\le \exp(-c\min\{t^2, t\}Nn) + N  n^{-cn}.
\end{align*}

\end{proof}

\section{Proof of (2) of Theorem \ref{theorem:main} with the modified three-step plan}\label{section:second} Although our treatment here is analogous to that of the top exponent, the argument is far more complicated as we have to take care of the angles of the pair vectors. 

We will first introduce some additional structures. The definition of LCD can be naturally extended to joint structure of two vectors. Let $\gamma,\kappa$ be given parameters and let $\Bx,\By$ be two vectors. Define

$$ \LCD_{\gamma,\kappa}(\Bx,\By):=\inf_{\theta_1^2+\theta_2^2=1} \LCD_{\gamma,\kappa}(\theta_1 \Bx+\theta_2 \By).
$$

For the rest of this section we will choose $\gamma$ to be a sufficiently small and $\kappa=n^c$ for some constant $c<1/16$.

\subsection{Step 1} Set $D=\exp(n^c)$. First of all we will have to choose $\CP_{start} \subset S^{n-1} \times S^{n-1}$ to satisfy the following conditions

\begin{itemize}
\item for all $(\Bx',\By')\in \CP_{start}$ we have $\|\Bx' \wedge \By'\|\ge \eps$, 
\vskip .1in
\item  for all $(\Bx',\By')\in \CP_{start}$ we have 
\begin{equation}\label{eqn:CP}
\LCD_{\kappa,\gamma}(\Bx'/\|\Bx'\wedge \By'\|,\By'/\|\Bx'\wedge \By'\|) \ge D,
\end{equation}
\vskip .1in
\item  for any $(\Bx, \By) \in S^{n-1} \times S^{n-1}$ there exists $(\Bx',\By') \in \CP_{start}$ such that $\|\Bx-\Bx'\|, \|\By-\By'\|\le \eps$.
\vskip .1in

\end{itemize}

Remark that a direct choice of $\CN_{start} \times \CN_{start}$ (with $\CN_{start}$ from Section \ref{section:step1}) would not work because there were no information on the joint structure.

\begin{lemma}\label{lemma:P-start} There exists a set $\CP_{start}$ with the above properties.
\end{lemma}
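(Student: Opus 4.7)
I will construct $\CP_{start}$ by a covering argument analogous to the one-vector version behind Lemma~\ref{lemma:dim1:dense}. The guiding idea is that the set of ``bad'' pairs---those violating $\|\Bx\wedge\By\|\ge\eps$ or the joint $\LCD$ condition---has small measure in $S^{n-1}\times S^{n-1}$, so a maximal $\eps$-net of the complement yields the desired set (with a mild perturbation to handle pairs near the ``wedge boundary'').

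First, the wedge constraint. For fixed $\Bx\in S^{n-1}$, the slice $\{\By\in S^{n-1}:\|\Bx\wedge\By\|<2\eps\}$ equals $\{\By:|\langle \Bx,\By\rangle|>\sqrt{1-4\eps^2}\}$, a union of two polar caps of angular radius $\arcsin(2\eps)$ with $(n-1)$-volume $O(\eps^{n-1})$. So the product measure of pairs violating (i) with slack $2\eps$ is $O(\eps^{n-1})\Vol(S^{n-1})$, negligible compared to $\Vol(S^{n-1})^2$ when $\eps$ is polynomial in $1/n$. For pairs $(\Bv,\Bv)$ near the diagonal, which cannot literally be covered by a point with wedge $\ge\eps$, I perturb $\By'$ to $(\Bv+\eps\Bu)/\sqrt{1+\eps^2}$ for some $\Bu\perp\Bv$; this keeps $\|\By-\By'\|\le\eps$ and produces wedge of order $\eps$, so $\CP_{start}$ may be taken as a net of the ``thickened'' interior region.

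The core of the argument is the joint $\LCD$, which I would reduce to Lemma~\ref{lemma:dim1:dense} via an angle-net argument. Pick an $\eta$-net $\Theta$ of $\{(\theta_1,\theta_2):\theta_1^2+\theta_2^2=1\}$ with $|\Theta|=O(1/\eta)$ and $\eta\approx D^{-2}$. By Fact~\ref{fact:approxLCD}, perturbing $(\theta_1,\theta_2)$ by $\eta$ only degrades the $\LCD$ parameters mildly, so it suffices to bound, for each fixed $(\theta_1,\theta_2)\in\Theta$ with $\min\{|\theta_1|,|\theta_2|\}\ge 1/D$, the measure of the set $\{(\Bx,\By):\LCD(\theta_1\Bx+\theta_2\By)<D/\eps\}$ (the factor $1/\eps$ absorbing the normalization by $\|\Bx\wedge\By\|$). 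I would change variables to $(\Bw,\By)$ with $\Bw=\theta_1\Bx+\theta_2\By$, recovering $\Bx$ from the quadratic constraint $\|\Bx\|=1$; the image of bad pairs lies in the set of bad $\Bw/\|\Bw\|$, which by Lemma~\ref{lemma:dim1:dense} has $(n-1)$-volume at most $n^{-cn/2}$ after a harmless inflation of $\kappa$. Since the Jacobian is uniformly bounded away from zero on the region $\min\{|\theta_1|,|\theta_2|\}\ge 1/D$, the preimage has comparable measure. The edge cases $\min\{|\theta_1|,|\theta_2|\}<1/D$ are automatic from the individual LCD bounds $\LCD(\Bx),\LCD(\By)\ge D$, which I would enforce by restricting attention from the outset to pairs of single vectors lying in the set $\CS$ of Section~\ref{section:step1}. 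Summing over $|\Theta|=O(D^2)=\exp(O(n^c))$ still leaves the total measure of joint-$\LCD$-bad pairs at most $n^{-cn/4}$, again negligible.

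Combining the two measure estimates, the complement of the bad set has nearly full measure in $S^{n-1}\times S^{n-1}$ and admits an $\eps$-net of size $(C/\eps)^{2n}$ satisfying all three conditions. The main obstacle, as expected, is the angle-net reduction: one must choose $\eta$ small enough that Fact~\ref{fact:approxLCD} preserves the LCD with tolerable loss in $(\kappa,\gamma)$, while keeping $|\Theta|$ small enough that summing the per-angle Lemma~\ref{lemma:dim1:dense} bounds stays subexponential---the choice $D=\exp(n^c)$ affords exactly the polynomial slack needed. The secondary technical point, controlling the Jacobian of $(\Bx,\By)\mapsto(\Bw,\By)$ near the degeneracy $\theta_1\Bx+\theta_2\By=0$, is resolved by first excising the small-wedge region and the coordinate-axis regime, after which the argument proceeds routinely.
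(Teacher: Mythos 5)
There is a genuine gap, and it sits exactly at the step you flag as "routine": the transfer of the single-vector measure bound (Lemma \ref{lemma:dim1:dense}) to pairs via the change of variables $(\Bx,\By)\mapsto(\Bw,\By)$, $\Bw=\theta_1\Bx+\theta_2\By$, with the claim that "the Jacobian is uniformly bounded away from zero on the region $\min\{|\theta_1|,|\theta_2|\}\ge 1/D$." For fixed $\By$ the relevant map is $F(\Bx)=\Bw/\|\Bw\|$ from $S^{n-1}$ to $S^{n-1}$, and its Jacobian is comparable to $(|\theta_1|/\|\Bw\|)^{n-1}\,|\langle \Bx,\Bw/\|\Bw\|\rangle|$. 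This is not bounded below on your region: it vanishes on the hypersurface $\langle\Bx,\By\rangle=-\theta_1/\theta_2$ (which lies well inside the region $\|\Bx\wedge\By\|\ge\eps$ whenever $|\theta_1/\theta_2|$ is not close to $1$), and, much worse, when $|\theta_1|$ is small the factor $|\theta_1|^{n-1}$ means the preimage of a set of measure $\mu$ can have measure of order $\mu\,|\theta_1|^{-(n-1)}$ (for tiny $\theta_1$ the map is nearly constant, $F(\Bx)\approx\By$, so a small bad set around $\By$ pulls back to essentially all of $S^{n-1}$). Your individual-LCD escape hatch only covers $|\theta_1|\lesssim D^{-2}$, since Fact \ref{fact:approxLCD} needs the perturbation $\|\theta_1\Bx+\theta_2\By-\By\|\approx|\theta_1|$ to be at most $\LCD(\By)^{-2}$. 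In the intermediate regime, say $|\theta_1|=n^{-1/2}$, the loss $n^{(n-1)/2}$ overwhelms the gain $n^{-cn/2}$ available from Lemma \ref{lemma:dim1:dense} (recall $c<1/16$), so the summed bound is vacuous; for $|\theta_1|$ near $D^{-2}$ the loss is $D^{O(n)}=\exp(O(n^{1+c}))$, astronomically too large.

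The paper's proof avoids any pushforward of measure. After discretizing $\theta_1,\theta_2,\|\Bx\wedge\By\|$ to multiples of $1/D$, it normalizes so that (after possibly swapping $\Bx,\By$ and flipping signs) $t_2\ge t_1\ge 0$, hence $t_2\ge 1/\sqrt2$, and then solves $\By=\frac{1}{t_2}\Bz-\frac{t_1}{t_2}\Bx$ with $\Bz$ ranging over the explicit net $\CN_D$ of small-LCD directions from Theorem \ref{theorem:net}. For each fixed $\Bx$ the bad $\By$'s are then covered by $|\CN_D|\cdot D^{O(1)}$ balls of radius $O(\kappa/D)$ — the division is always by the \emph{large} coefficient, so there is no magnification at all, and the total bad volume $\Vol_T\le \Vol(S^{n-1})^2(CC_0\kappa/\sqrt n)^nD^{O(1)}$ is beaten by the volume $(\eps/2C)^{2n}\Vol(S^{n-1})^2$ of any $\eps/2$-neighborhood, after which a maximal packing of $\CS_{separate}$ gives $\CP_{start}$. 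To repair your argument you would either have to symmetrize in the same way (solve for the vector carrying the coefficient of size $\ge1/\sqrt2$, which effectively turns your computation into the paper's), or replace the appeal to Lemma \ref{lemma:dim1:dense} by a direct count against the net $\CN_D$; as written, the per-angle bound $n^{-cn/2}$ for the bad pairs is not justified. (The secondary points — the near-diagonal perturbation only yields wedge $\eps/\sqrt{1+\eps^2}<\eps$ and you never check the joint LCD of the perturbed pair — are minor by comparison and are handled in the paper by packing the good set $\CS_{separate}$ itself rather than perturbing by hand.)
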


\begin{proof}(of Lemma \ref{lemma:P-start}) In what follows we will be focusing on the set $\CS_{separate}$ of pairs of unit vectors $\Bx,\By \in S^{n-1}$ with $\|\Bx \wedge \By\| \ge \eps$.

Assume that $(\Bx,\By) \in \CS_{separate}$ which violates \eqref{eqn:CP}. In other words there exist  $\theta_1^2 +\theta_2^2=1$ with

\begin{equation}\label{eqn:P-start:1}
\LCD( \frac{1}{\|\Bx \wedge \By\|} \theta_1\Bx + \frac{1}{\|\Bx \wedge \By\|}  \theta_2 \By)\le D.
\end{equation}

In the next step we $1/D^{O(1)}$-approximate the parameters $\theta_1,\theta_2, \|\Bx \wedge \By\|$ by numbers of form $k \frac{1}{D}, k\in \Z$. Thus by losing a factor of $D^{O(1)}$ in probability at most, by using Fact \ref{fact:approxLCD} and that $\|\Bx \wedge \By\|\ge \eps$, we can treat $\theta_1,\theta_2, \|\Bx \wedge \By\|$ as constants. Furthermore, by changing the vector direction if needed, without loss of generality we can assume $\theta_2 \ge \theta_1>0$. 

By Theorem \ref{theorem:net}, we thus have three vectors $\Bx, \By, \Bz$ where $\Bz = t_1 \Bx + t_2 \By$ with $\Bz \in k \cdot \CN_D$ and $t_1^2+ t_2^2 \gg 1$ as well as $t_2\ge t_1 \ge 0$.

 Solving for $\By$, 

$$\By = \frac{1}{t_2}\Bz - \frac{t_1}{t_2} \Bx.$$

We conclude that there exists an absolute constant $C$ such that for any given $\Bx \in S^{n-1}$, the vectors $\By \in S^{n-1}$ for which \eqref{eqn:P-start:1} holds belong to a set $\CS_\Bx$ of volume at most

\begin{align*}
\Vol(\CS_\Bx) \le \Vol_{n-1}(B(0,C\kappa/D)) \times |\CN_D| \times D^{O(1)}. 
\end{align*}

where the first two factors come from the approximation of $\Bz$ and the magnifying factor $t_1/t_2$,  while the third factor comes from approximations of the parameters $\theta_1,\theta_2, \|\Bx \wedge \By\|$ by numbers of the form $k \frac{1}{D}, k\in \Z$ as above. 

Varying $\Bx \in S^{n-1}$, the total volume $\Vol_T$ of such a pair $(\Bx, \By) \in \CS_{separate}$ satisfying \eqref{eqn:P-start:1} is at most  

\begin{align}\label{eqn:dim2:rare}
\Vol_T &\le \Vol_{n-1}(S^{n-1})  \Vol_{n-1}(\BB(0,C\kappa/D)) \times |\CN_D| \times D^{O(1)} \nonumber \\
 &\le \Vol_{n-1}(S^{n-1})  \frac{\pi^{(n-1)/2}}{\Gamma((n-1)/2)} (C\kappa/D)^{n-1} \times (C_0D/\sqrt{n})^n \log_2 D \times D^{O(1)} \nonumber \\
 &\le \Vol_{n-1}(S^{n-1})  \frac{\pi^{(n-1)/2}}{\Gamma((n-1)/2)} (CC_0 \kappa/\sqrt{n})^n D^{O(1)} \nonumber \\
 & \le \Vol_{n-1}(S^{n-1})  \times \Vol_{n-1}(S^{n-1})  \times  (CC_0 \kappa/\sqrt{n})^n D^{O(1)}.
\end{align}
 
Next, notice that the total volume of an $\eps/2$-neighborhood of any point on $S^{n-1} \times S^{n-1}$ is at least 
 
\begin{align}\label{eqn:dim2:rich}
V_{\eps/2} &\ge \Vol_{n-1} (\BB(0,\eps/2)\cap S^{n-1}) \times \Vol_{n-1}(\BB(0,\eps/2) \cap S^{n-1}) \nonumber \\
&\ge  \Vol_{n-1}(S^{n-1})  \times \Vol_{n-1}(S^{n-1})  \times  (\eps/2C)^{2n}.
\end{align}

Thus $V_{\eps/2} > \Vol_T$ if we choose $\kappa = n^c$ for some $c<1/16$. 

It follows that for any $\eps/2$-neighborhood of any point on $S^{n-1} \times S^{n-1}$, there exists a point $(\Bx,\By) \in \CS_{separate}$  such that  \eqref{eqn:CP} holds. 
The proof of Lemma \ref{lemma:P-start} is then complete by considering a maximal $\eps/4$-packing of $\CS_{separate}$.
\end{proof}

Let $\CP:=\CP_D$ be the collection of vector pairs $(\Bx,\By)$ in $\R^n \times \R^n$ such that 

$$\LCD_{\kappa,\gamma}(\frac{\Bx}{\|\Bx \wedge \By\|},\frac{\By}{\|\Bx \wedge \By\|})\ge D.$$
 
Note that here the vectors of $\CP$ do not need to be unit, and by definition $\CP_{start} \subset \CP \cap S^{n-1} \times S^{n-1}$. Our next key result is an analog of Theorem \ref{theorem:stable} for joint-structures.

\begin{theorem}[stability of non-structures, jointly]\label{theorem:stable:2} Assume that $A=(a_{ij})_{1\le i,j\le n}$ is a random matrix of size $n$ whose entries are iid copies of $\xi$ satisfying \eqref{eqn:subgaussian}. Let  $(\Bx, \By)$ be any deterministic vector pair from $\CP$. Then 
$$\P_A\Big((\frac{A\Bx}{\|A \Bx\|}, \frac{A\By}{\|A \By \|})  \notin \CP\Big) \le n^{-cn/8}.$$
\end{theorem}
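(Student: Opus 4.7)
The plan is to run a net-and-tensorization argument parallel to the proof of Theorem \ref{theorem:stable}, tracking the joint LCD of the pair in place of a single-vector LCD. Set $\rho_1 = \|A\Bx\|$, $\rho_2 = \|A\By\|$, $\omega = \|A\Bx \wedge A\By\|$, and $\Bu = A\Bx/\rho_1$, $\Bv = A\By/\rho_2$. The failure $(\Bu, \Bv) \notin \CP$ amounts to the existence of unit $(\theta_1, \theta_2)$ with $\LCD(\theta_1 \Bu + \theta_2 \Bv) < M := D\rho_1\rho_2/\omega$. Now $\theta_1 \Bu + \theta_2 \Bv = A\Bz$ for $\Bz := (\theta_1/\rho_1)\Bx + (\theta_2/\rho_2)\By$, and the hypothesis $(\Bx, \By) \in \CP$ together with Fact \ref{fact:approxLCD} gives $\LCD(\Bz) \ge D/(t\,\|\Bx \wedge \By\|)$, where $t := \sqrt{(\theta_1/\rho_1)^2 + (\theta_2/\rho_2)^2}$. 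Hence the task reduces to showing that $\LCD(A\Bz) < M$ is rare when $\Bz$ has this much LCD.

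As in Theorem \ref{theorem:stable}, I would first restrict to a good event $\CE_{\mathrm{good}}$ on which $\rho_1, \rho_2 \in [n^{1/2 - c/4},\, n^{1-2c}]$ and $\omega \ge n^{1 - c/2}$. The upper tails are standard Chernoff; the single-vector lower tails for $\rho_1, \rho_2$ follow exactly as in Theorem \ref{theorem:stable}, using $\LCD(\Bx), \LCD(\By) \ge D/\|\Bx \wedge \By\|$ (the $(1,0)$ and $(0,1)$ instances of the joint-LCD hypothesis). The genuinely new estimate is the lower bound on $\omega$: writing $\omega = \rho_2 \cdot \dist(A\Bx, \mathrm{span}(A\By)) = \rho_2 \inf_{\mu \in \R} \|A(\Bx - \mu \By)\|$, I dispose of large $|\mu|$ by the trivial bound $\|A(\Bx - \mu\By)\| \ge |\mu|\rho_2 - \rho_1$, and for bounded $\mu$ I cover the parameter by a polynomial-size net and apply Theorem \ref{theorem:ball} plus Lemma \ref{lemma:tensorization} to each $\Bx - \mu \By$, whose LCD is still $\ge D/(\sqrt{1+\mu^2}\,\|\Bx \wedge \By\|) \gg \sqrt n$ by the same scaling fact. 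This yields $\P(\CE_{\mathrm{good}}^c) \le n^{-cn/4}$ with room to spare.

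On $\CE_{\mathrm{good}}$, I discretize the continuous parameters $(\theta_1, \theta_2, \rho_1, \rho_2, \omega)$ on a $D^{-C}$-grid at cost of a multiplicative $D^{O(1)} = e^{O(n^c)}$ factor. With these parameters frozen, $M$ is fixed; Theorem \ref{theorem:net} covers the unit direction $A\Bz/\|A\Bz\|$ by a $(\kappa/M)$-net $\CN_M$ of size at most $(CM/\sqrt n)^n \log M$, while $\|A\Bz\| \in [\omega/(\sqrt 2 \rho_1\rho_2),\, \sqrt 2]$ is handled by an additional $O(\log n)$ dyadic partition with dyadic level $r$. For each candidate $r\Bq$ with $\Bq \in \CN_M$, Theorem \ref{theorem:ball} applied coordinatewise to the independent rows $\langle \text{row}_i(A), \Bz\rangle$ (legitimate because $r\kappa/M \ge 1/\LCD(\Bz)$ on $\CE_{\mathrm{good}}$), followed by Lemma \ref{lemma:tensorization}, gives $\P(\|A\Bz - r\Bq\| \le r\kappa/M) \le (Cr\kappa/(\sqrt n\, M\, \|\Bz\|))^n$, where the elementary lower bound $\|\Bz\| \ge \|\Bx \wedge \By\|/\sqrt{\rho_1^2+\rho_2^2}$ (from the determinant of the Gram matrix of $(\Bx/\rho_1, \By/\rho_2)$) controls the denominator. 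Combining the net size, the small-ball probability, the dyadic factors, and the parameter grid, the total union bound collapses to $D^{O(1)} \cdot (C\rho_1\rho_2\kappa/(\omega n \|\Bx \wedge \By\|))^n$ up to logarithms, which is $n^{-\Omega(cn)}$ for $c < 1/16$, comfortably yielding the claimed $n^{-cn/8}$.

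The main obstacle is the lower bound on $\omega$: it is new to the two-vector setting and is precisely where the joint-LCD hypothesis must be used at directions that genuinely mix $\Bx$ and $\By$, rather than the axis directions $(1,0)$ and $(0,1)$. A secondary accounting concern is that the final estimate formally degrades when $\|\Bx \wedge \By\|$ is very small, but in the intended application $(\Bx, \By) \in \CP_{start}$ enforces $\|\Bx \wedge \By\| \ge \eps$, and more generally the defining condition of $\CP$ itself forces $\|\Bx \wedge \By\|$ to be at least $D/\LCD_{\max}(\Bx,\By)$. Everything else is a careful but routine bookkeeping extension of the single-vector argument in Theorem \ref{theorem:stable}.
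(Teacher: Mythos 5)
The proposal follows the same three-ingredient strategy as the paper's proof: reduce the joint-LCD failure of $(\frac{A\Bx}{\|A\Bx\|},\frac{A\By}{\|A\By\|})$ to a single-vector LCD failure of $A\Bz$ for a suitable combination $\Bz$ of $\Bx$ and $\By$, note that $(\Bx,\By)\in\CP$ forces $\LCD(\Bz)\ge D/(t\|\Bx\wedge\By\|)$, and then run the net/small-ball/tensorization machinery of Theorems \ref{theorem:net} and \ref{theorem:ball} and Lemma \ref{lemma:tensorization}, with a discretization in the continuous coefficients. The one genuine difference in route is the handling of the wedge quantity: the paper first conditions on the wedge-ratio estimate (the statement cites Theorem \ref{theorem:dim2:upper} but the inequality used is the lower bound of Theorem \ref{theorem:dim2:lower}) so that the LCD threshold becomes a deterministic $D' = Dn^{c/16}$ measured against the known $\|\Bx\wedge\By\|$, whereas you keep $\omega=\|A\Bx\wedge A\By\|$ as a random parameter, bound it below (essentially re-deriving Theorem \ref{theorem:dim2:lower} via $\omega=\rho_2\inf_\mu\|A(\Bx-\mu\By)\|$), and then discretize $\omega$ along with $\rho_1,\rho_2,\theta_1,\theta_2$. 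Both variants are workable, and the paper's pre-conditioning buys a slightly cleaner discretization.

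Two points of your bookkeeping need repair, and you are only half-aware of the second. First, the claimed good event $\omega\ge n^{1-c/2}$ cannot hold uniformly over $(\Bx,\By)\in\CP$: from the computation you propose, $\omega$ is typically of order $\rho_1\rho_2\|\Bx\wedge\By\|$, so the correct lower bound is $\omega\gtrsim n^{1-c/2}\|\Bx\wedge\By\|$, and the $\|\Bx\wedge\By\|$ factor cannot be dropped since the definition of $\CP$ puts no a priori floor on $\|\Bx\wedge\By\|$. Second, and consequently, the final estimate you write, $\bigl(C\rho_1\rho_2\kappa/(\omega n\|\Bx\wedge\By\|)\bigr)^n$, reduces roughly to $\bigl(Cn^{c-1}/\|\Bx\wedge\By\|^2\bigr)^n$ and so blows up when $\|\Bx\wedge\By\|\lesssim n^{-1/2}$. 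Your remark that $\CP_{start}$ enforces $\|\Bx\wedge\By\|\ge\eps$ covers only the first step of the chain; the iterates $(\Bx_i,\By_i)$ can, by Theorems \ref{theorem:dim2:lower}--\ref{theorem:dim2:upper}, lose a factor $n^{\Theta(c)}$ in the wedge at each step, so after $N\gg 1$ steps the wedge is well below $n^{-1/2}$, and the appeal to $D/\LCD_{\max}$ is circular since $\LCD_{\max}$ has no finite a priori bound. The fix is available and implicit in your own framework: after the net size $(CM r/\sqrt{n})^n$ and the small-ball factor $(C\kappa/(\sqrt{n}Mr\cdot\text{per-coord})\,\cdots)^n$ are multiplied, what survives is $(Cr\kappa/(n\|\Bz\|))^n$ (note the net for the unit direction should be $\CN_{Mr}$, not $\CN_M$, since $\LCD(A\Bz/\|A\Bz\|)=\|A\Bz\|\LCD(A\Bz)<Mr$); using $r=\|A\Bz\|\le\|A\|_{\mathrm{op}}\|\Bz\|=O(\sqrt n)\|\Bz\|$ on a standard high-probability event makes this $\bigl(C\kappa/\sqrt n\bigr)^n=(Cn^{c-1/2})^n$, with no $\|\Bx\wedge\By\|$ dependence. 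Incorporating the operator-norm event into $\CE_{\mathrm{good}}$ and propagating $\|\Bx\wedge\By\|$ correctly through the $\omega$ bound would close the argument and match the paper's conclusion.
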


By Theorem \ref{theorem:dim2:upper} (to be proved separately later), with a probability at least $1- n^{-cn/16}$ we can assume that 

$$\| A\Bx/\|A \Bx\| \wedge A\By/\|A \By \| \| \le n^{c/16} \|\Bx \wedge \By\|.$$
 
Thus by definition of LCD (see Fact \ref{fact:approxLCD}), for Theorem \ref{theorem:stable:2} it suffices to show 
 
 \begin{equation}\label{eqn:stable:2'}
 \P_A\Big(\LCD_{\gamma, \kappa}(\frac{A\Bx/\|A \Bx\|}{\| \Bx \wedge \By \|},\frac{A\By/\|A \By\|}{\| \Bx \wedge \By \|})  \le D'\Big) \le n^{-cn/8},
 \end{equation}

where $D'=Dn^{c/16}$.

\begin{proof}(of Theorem \ref{theorem:stable:2}) By the proof of Theorem \ref{theorem:stable}, it suffices to focus on the event $\CE_1$ that $\|A\Bx\|^2, \|A\By\|^2 \ge n^{1-c/2}$. 

For each $n^{1/2-c/4} \le r,s \le n^{1-2c}$ (which can be approximated by integral points of the form $r_i=i n^{-c}, i\in \Z$) let us look at the event $\CE_{r_i,s_j} $ that $r_i \le \|A\Bx\|  \le r_{i+1}$ and $s_j \le \|A\By\|\le s_{j+1}$   and such that

\begin{equation}\label{eqn:stable:2:LCD}
\LCD_{\gamma,\kappa}( \frac{1}{\|\Bx \wedge \By\|} A\Bx/r_i,   \frac{1}{\|\Bx \wedge \By\|}  A\By/s_j) \le (1+o(1))D'.
\end{equation}

(Note that by Fact \ref{fact:approxLCD} $\LCD_{\gamma,\kappa}( \frac{1}{\|\Bx \wedge \By\|}  A\Bx/r,  \frac{1}{\|\Bx \wedge \By\|}  A\By/s)$ are comparable in the range $r_i \le r\le r_{i+1}, s_j \le s \le s_{j+1}$.)

In other words, by definition of joint LCD there exist $\theta_1,\theta_2$ with $\theta_1^2+\theta_2^2=1$ such that 

\begin{equation}\label{eqn:stable:2:approx}
\LCD_{\gamma,\kappa}( \frac{1}{\|\Bx \wedge \By\|} \theta_1 A\Bx/r_i + \frac{1}{\|\Bx \wedge \By\|}  \theta_2 A\By/s_j) \le (1+o(1))D'.\end{equation}

We can write 

$$\theta_1 A\Bx/r_i + \theta_2 A\By/s_j =  \theta A (\theta_1'\Bx +\theta_2' \By).$$ 

Here

$$ \theta= \sqrt{(\theta_1/r_i)^2 + (\theta_2/s_j)^2} \mbox{ and } \theta_1'=\frac{\theta_1/r_i}{\theta}, \theta_2'=\frac{\theta_2/s_j}{\theta}.$$  

Because $\theta$ is not too small ($\theta \approx n^{1/2}$), we can again assume it to have the form $i D^{-O(1)}$ and relax the constrain $\theta_1'^2+\theta_2'^2=1$ to $1- D^{-O(1)} \le \theta_1'^2+\theta_2'^2 \le 1+D^{-O(1)}$.



Now we look at the event $\LCD_{\gamma,\kappa}( \frac{1}{\|\Bx \wedge \By\|}  \theta A (\theta_1'\Bx +\theta_2' \By)) \le (1+o(1))D'$ from \eqref{eqn:stable:2:approx}. By Theorem \ref{theorem:net}, there exists $\Bu \in \theta^{-1}\cdot \CN_{D'}$ such that 

\begin{equation}\label{eqn:dim2:theta'}
\| \frac{1}{\|\Bx \wedge \By\|}  A (\theta_1'\Bx +\theta_2' \By)-\Bu\| \le O( \theta^{-1} \kappa/D').
\end{equation}

By passing to numbers of the form $i (\sqrt{n}D')^{-1}, i\in \Z$, up to a multiplicative factor in the RHS of \eqref{eqn:dim2:theta'}, we can assume $\theta_1'$ and $\theta_2'$ to be fixed so that we can take a union bound over the set of these integral points, which obviously has cardinality $D^{O(1)}$).  In other words, by passing to those approximated points, with a loss of a factor of $n^{O(1)}D^{O(1)}$ in probability we will be arriving at \eqref{eqn:dim2:theta'} with fixed $\theta_1',\theta_2'$ and $\Bx,\By,\Bu$.

Now we analyze the probability of the event from \eqref{eqn:dim2:theta'} by  invoking the argument from the proof of Theorem \ref{theorem:stable}. As $(\Bx,\By) \in \CP$, we have $\LCD( \frac{1}{\|\Bx \wedge \By\|}\Bx, \frac{1}{\|\Bx \wedge \By\|}\By)\ge D$, henceforth 

$$\LCD( \frac{1}{\|\Bx \wedge \By\|} \theta_1'\Bx + \frac{1}{\|\Bx \wedge \By\|} \theta_2' \By) \ge D.$$ 

Note that $\theta^{-1} \kappa \ge n^{1/2 +3c/4} > n^{1/2}$. As $\|\theta_1' \Bx + \theta_2' \By\| =\Omega( \|\Bx \wedge \By\|)$ for any $\theta_1', \theta_2'$ with $\theta_1'^2 +\theta_2'^2 =1+o(1)$, by Theorem \ref{theorem:ball}

\begin{align*}
\P(|a_{i1}z_1+\dots+a_{in} z_n - u_i| \le  \frac{r \kappa}{D\sqrt{n}} ) = O(\frac{r\kappa}{D\sqrt{n}}),
\end{align*}

where for short we set $\Bz=(z_1,\dots,z_n):= \theta_1' \Bx + \theta_2' \By$. 

Thus by Lemma \ref{lemma:tensorization}, for any fixed $\Bu$

\begin{align*}
 \P(\| \frac{1}{\|\Bx \wedge \By\|}  A\Bz-\Bu\|^2 \le  \frac{r^2 \kappa^2}{D^2})  \le (\frac{C r \kappa}{D\sqrt{n}})^n.
\end{align*}

We have thus obtained (taking into account of the size of $\CN_{D'}$ from Theorem \ref{theorem:net})

\begin{align*}
& \P\Big (\exists \Bu\in \theta^{-1} \cdot \CN_D, \| \frac{1}{\|\Bx \wedge \By\|}  A\Bz-\Bu\| \le O(r\kappa/D), \LCD ( \frac{1}{\|\Bx \wedge \By\|} A\Bz) \le (1+o(1))D \Big) \\
& \le |\CN_{D'}|  (\frac{C r\kappa}{D\sqrt{n}})^n \le n^{-cn}.
\end{align*}
To complete our proof, we take the union bound over all the choices of $r_i, s_j, \theta, \theta_1', \theta_2'$ to obtain a bound $n^{-cn} D^{O(1)} \le n^{-cn/2}$, completing the proof of \eqref{eqn:stable:2'}.
\end{proof}

\subsection{Step 2} Now we turn to the second step of the plan to control $\frac{\|A\Bx/\|A\Bx\| \wedge A \By/\|A\By\|\|}{\|\Bx \wedge \By\|}$ for given $(\Bx,\By)\in \CP$. Note that if $\|\Bu\|=\|\Bv\|=1$, then 

\begin{align}\label{eqn:dim2:wedge}
\|\Bu \wedge \Bv\|^2= \|\Bu\|^2\|\Bv\|^2-\langle \Bu, \Bv\rangle^2 = 1 - \langle \Bu, \Bv\rangle^2 & = 1-(1-\|\Bu -\Bv\|^2/2)^2 \nonumber \\ 
&= \|\Bu-\Bv\|^2 - (\frac{\|\Bu -\Bv\|^2}{2})^2.
\end{align}

We will be finding the following fact useful.

\begin{fact}\label{fact:f} Let $f(x) =x - x^2/4, 0\le x \le 1$, and let $0<x_1, x_2<1$. Let $t>0$ be a parameter.

\begin{itemize}
\item If $f(x_2)/f(x_1) \ge 1+t $ then $x_2/x_1 \ge 1+t$.
\vskip .1in
\item If $f(x_2)/f(x_1) \le t <1$ then $x_2/x_1 \le t$.
\end{itemize}
\end{fact}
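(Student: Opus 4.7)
The plan is to exploit the factorization $f(x) = x\bigl(1 - x/4\bigr)$, which immediately gives
$$
\frac{f(x_2)}{f(x_1)} \;=\; \frac{x_2}{x_1}\cdot\frac{1 - x_2/4}{1 - x_1/4}.
$$
The key monotonicity observations are that $f$ itself is strictly increasing on $[0,1]$ (its derivative equals $1 - x/2 > 0$ there), and that the ratio $f(x)/x = 1 - x/4$ is strictly decreasing. Everything will follow by comparing the two possible orderings of $x_1,x_2$ against the ordering forced by the hypothesis on $f(x_2)/f(x_1)$.

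For part (i), I would argue that $f(x_2)/f(x_1) \ge 1+t > 1$ together with the strict monotonicity of $f$ forces $x_2 > x_1$; hence $1 - x_2/4 < 1 - x_1/4$, so the correction factor $(1-x_2/4)/(1-x_1/4)$ is strictly less than $1$. Solving for $x_2/x_1$ in the displayed identity yields
$$
\frac{x_2}{x_1} \;=\; \frac{f(x_2)}{f(x_1)}\cdot\frac{1 - x_1/4}{1 - x_2/4} \;\ge\; \frac{f(x_2)}{f(x_1)} \;\ge\; 1+t.
$$
For part (ii), the hypothesis $f(x_2)/f(x_1) \le t < 1$ and the same monotonicity give $x_2 < x_1$, so now $(1-x_1/4)/(1-x_2/4) < 1$, and the same rearrangement yields $x_2/x_1 \le t$.

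There is no real obstacle here; the statement is an elementary inequality, and the only thing to check is that the multiplicative correction $(1-x_2/4)/(1-x_1/4)$ always pushes the ratio $x_2/x_1$ in the direction favorable to the claim, which is exactly what the decreasing behavior of $f(x)/x$ supplies. The entire proof should fit in a few lines and requires no additional lemmas from the earlier sections.
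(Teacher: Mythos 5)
Your proof is correct. The paper states Fact~\ref{fact:f} without proof, treating it as an elementary observation, and your argument via the factorization $f(x)=x(1-x/4)$ — using that $f$ is strictly increasing on $[0,1]$ to fix the ordering of $x_1,x_2$ and that $f(x)/x$ is decreasing to show the correction factor $(1-x_1/4)/(1-x_2/4)$ pushes $x_2/x_1$ in the favorable direction — is exactly the natural justification and fills the gap cleanly.
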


To bound $\frac{\|A\Bx/\|A\Bx\| \wedge A \By/\|A\By\|\|}{\|\Bx \wedge \By\|}$ from below, we invoke the following result.

\begin{theorem}\label{theorem:dim2:lower} For any $(\Bx,\By)\in \CP$ and any $\delta \ge 1/D$,

$$\P_A(\frac{\|A\Bx/\|A\Bx\| \wedge A \By/\|A\By\|\|}{\|\Bx \wedge \By\|} \le \delta^2) \le (C\delta)^n.$$

\end{theorem}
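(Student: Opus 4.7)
My approach is to reduce the wedge ratio to the smallest singular value of the $n\times 2$ matrix $B:=[A\Bx\mid A\By]$, and then to use anti-concentration over unit linear combinations of $\Bx$ and $\By$, powered by the joint LCD hypothesis $(\Bx,\By)\in\CP$. Writing the singular values of $B$ as $\sigma_1\ge\sigma_2\ge 0$, one has $\|A\Bx\wedge A\By\|=\sigma_1\sigma_2$ and $\sigma_1\ge\max(\|A\Bx\|,\|A\By\|)$, so after conditioning on the standard high-probability event $\|A\|\le C\sqrt n$ (which holds off an exceptional set of probability $\exp(-cn)$ for sub-gaussian $A$),
\[
\frac{\|A\Bx/\|A\Bx\|\wedge A\By/\|A\By\|\|}{\|\Bx\wedge\By\|}=\frac{\sigma_1\sigma_2}{\|A\Bx\|\,\|A\By\|\,\|\Bx\wedge\By\|}\ge \frac{\sigma_2(B)}{C\sqrt n\,\|\Bx\wedge\By\|}.
\]
It therefore suffices to show $\sigma_2(B)\gtrsim \delta^2\sqrt n\,\|\Bx\wedge\By\|$ off an event of probability $(C\delta)^n$.

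By the variational characterization $\sigma_2(B)=\inf_{(\alpha,\beta)\in S^1}\|A\Bz_{\alpha,\beta}\|$ with $\Bz_{\alpha,\beta}:=\alpha\Bx+\beta\By$, I first establish a pointwise lower bound for fixed $(\alpha,\beta)$. Two structural facts are needed: (i) $\|\Bz_{\alpha,\beta}\|\ge\|\Bx\wedge\By\|/\sqrt 2$, which follows from $\|\alpha\Bx+\beta\By\|^2=1+2\alpha\beta\cos\theta$ (for $\Bx,\By$ taken WLOG to be unit) combined with $1-|\cos\theta|\ge\tfrac12\sin^2\theta=\tfrac12\|\Bx\wedge\By\|^2$; and (ii) $\LCD_{\kappa,\gamma}(\Bz_{\alpha,\beta})\ge D/\|\Bx\wedge\By\|$, which follows from the joint LCD assumption $(\Bx,\By)\in\CP$ together with the rescaling relation of Fact \ref{fact:approxLCD}. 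Feeding these into Theorem \ref{theorem:ball} for each coordinate $\langle\Ba_i,\Bz_{\alpha,\beta}\rangle$ of $A\Bz_{\alpha,\beta}$ and then invoking Lemma \ref{lemma:tensorization} yields
\[
\P\bigl(\|A\Bz_{\alpha,\beta}\|\le\sigma\,\|\Bx\wedge\By\|\sqrt n\bigr)\le (C\sigma)^n
\]
for every $\sigma\ge 1/D$; the factor $\|\Bx\wedge\By\|$ in the LCD exactly cancels the corresponding factor in $\|\Bz_{\alpha,\beta}\|$, leaving a clean bound.

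To upgrade the pointwise estimate to a uniform one over $(\alpha,\beta)\in S^1$, I run a standard $\eps$-net argument with $\eps\asymp\sigma\|\Bx\wedge\By\|$. On the event $\|A\|\le C\sqrt n$, the map $(\alpha,\beta)\mapsto A\Bz_{\alpha,\beta}$ is $\sqrt 2\,\|A\|$-Lipschitz, so the perturbation in $\|A\Bz_{\alpha,\beta}\|$ across the net is at most $O(\sigma\|\Bx\wedge\By\|\sqrt n)$ and is absorbed by the threshold with a constant-factor slack. The net has cardinality only $O(1/\eps)$, which is polynomial in $n$ and is dominated by the exponential factor $(C\sigma)^n$. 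This delivers $\sigma_2(B)\ge\tfrac12\sigma\,\|\Bx\wedge\By\|\sqrt n$ off an event of probability $(C'\sigma)^n$. Combining with the initial reduction and choosing $\sigma=\max(1/D,\,c\delta^2)$ gives the claim: when $c\delta^2\ge 1/D$ the failure probability is $(C\delta^2)^n\le(C\delta)^n$ (using $\delta\le 1$), and when $\delta\in[1/D,(cD)^{-1/2}]$ we use monotonicity together with $\delta\ge 1/D$ to bound $(C/D)^n\le (C\delta)^n$.

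The main obstacle is the scaling in $\|\Bx\wedge\By\|$: when $\Bx$ and $\By$ are nearly collinear the combinations $\Bz_{\alpha,\beta}$ can be short, and the per-coordinate small-ball bound would naively degrade by a factor $1/\|\Bx\wedge\By\|$. This would be fatal were it not exactly balanced by the joint LCD of $(\Bx,\By)\in\CP$ inflating $\LCD_{\kappa,\gamma}(\Bz_{\alpha,\beta})$ by the same factor, so the two effects cancel. Tracking this cancellation cleanly through the tensorization and the subsequent net loss is the delicate accounting that the proof must carry out; the overall scheme otherwise parallels the top-exponent bound of Theorem \ref{theorem:stable}.
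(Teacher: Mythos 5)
Your proposal is correct and follows essentially the same route as the paper: after conditioning on $\|A\|=O(\sqrt n)$ and on the norms $\|A\Bx\|,\|A\By\|$, both arguments reduce the small-wedge event to smallness of $\inf_{\alpha^2+\beta^2=1}\|A(\alpha\Bx+\beta\By)\|$ (your $\sigma_2$ of $[A\Bx\mid A\By]$ is exactly what the paper bounds after its reduction via \eqref{eqn:dim2:wedge}), exploit the joint LCD together with $\|\alpha\Bx+\beta\By\|\gtrsim\|\Bx\wedge\By\|$ to apply Theorem \ref{theorem:ball} and Lemma \ref{lemma:tensorization} pointwise, and conclude with a one-dimensional net and a union bound. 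One small inaccuracy: your net has cardinality $O\bigl(1/(\sigma\|\Bx\wedge\By\|)\bigr)$, which need not be polynomial in $n$, but—just as with the paper's own $D^{O(1)}$ accounting—this factor is still absorbed by the exponential bound $(C\sigma)^n$.
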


As by Theorem \ref{theorem:deviation} $\|A\Bx\|, \|A \By\| \ge \delta \sqrt{n}$ with probability at least $1- (C\delta)^n$, we obtain

\begin{corollary}\label{cor:dim2:lower} 
$$\P_A( \frac{\|A\Bx \wedge A \By\|}{\|\Bx \wedge \By\|} \le n \delta^4) \le 2(C\delta)^n.$$
\end{corollary}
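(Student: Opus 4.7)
The corollary should follow almost immediately from Theorem \ref{theorem:dim2:lower} combined with the two-sided norm control provided by Theorem \ref{theorem:deviation}. The plan is to factor the wedge as
\[
\|A\Bx \wedge A\By\| = \|A\Bx\|\,\|A\By\| \cdot \Big\|\tfrac{A\Bx}{\|A\Bx\|} \wedge \tfrac{A\By}{\|A\By\|}\Big\|,
\]
and then bound each factor from below on a high-probability event.

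First I would use Theorem \ref{theorem:deviation}(ii), applied with $t = -2\log \delta$ (which is admissible as long as $\delta \ge 1/D$, i.e. $t \le 2\log D$), to deduce
\[
\P_A\big(\|A\Bx\| \le \delta \sqrt{n}\big) = \P_A\big(y \le 2\log \delta\big) \le (K\delta)^n,
\]
and the same bound for $\|A\By\|$. Note that the hypothesis $(\Bx,\By)\in\CP$ ensures in particular that $\Bx/\|\Bx \wedge \By\|$ and $\By/\|\Bx \wedge \By\|$ (and hence $\Bx,\By$ themselves, up to rescaling) have LCD at least $D$, which is what is needed to apply Theorem \ref{theorem:deviation}(ii) to each individually (after rescaling the vectors to satisfy the normalization used in Section \ref{section:step2}). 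The range $\delta \ge 1/D$ is exactly what keeps $t$ inside the permissible window.

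Next, Theorem \ref{theorem:dim2:lower} gives
\[
\P_A\Big(\Big\|\tfrac{A\Bx}{\|A\Bx\|} \wedge \tfrac{A\By}{\|A\By\|}\Big\| \le \delta^2 \|\Bx \wedge \By\|\Big) \le (C\delta)^n.
\]
On the complement of the union of these three bad events, all three factors in the displayed factorization are bounded below by, respectively, $\delta\sqrt{n}$, $\delta\sqrt{n}$, and $\delta^2\|\Bx\wedge\By\|$, so their product is at least $n\delta^4\|\Bx \wedge \By\|$, which is exactly what we want. A union bound then yields
\[
\P_A\!\Big(\tfrac{\|A\Bx\wedge A\By\|}{\|\Bx \wedge \By\|} \le n\delta^4\Big) \le (C\delta)^n + 2(K\delta)^n \le 2(C'\delta)^n
\]
after absorbing $K$ into the constant $C$.

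There is no real obstacle here — all the work was already done in Theorems \ref{theorem:dim2:lower} and \ref{theorem:deviation}; this corollary is a routine repackaging that trades the normalized wedge for the unnormalized one at the cost of the additional factor $n\delta^2$ coming from the two norm bounds. The only mild care needed is to check that the hypothesis $\delta \ge 1/D$ is compatible with both applications of Theorem \ref{theorem:deviation}(ii) and with Theorem \ref{theorem:dim2:lower}, which it is.
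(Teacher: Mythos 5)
Your proposal is correct and follows essentially the same route as the paper: the paper likewise deduces the corollary by combining Theorem \ref{theorem:dim2:lower} with the lower bounds $\|A\Bx\|,\|A\By\|\ge \delta\sqrt{n}$ from Theorem \ref{theorem:deviation}(ii) and a union bound, via the factorization $\|A\Bx\wedge A\By\|=\|A\Bx\|\,\|A\By\|\,\bigl\|\tfrac{A\Bx}{\|A\Bx\|}\wedge\tfrac{A\By}{\|A\By\|}\bigr\|$. Your added checks (that the joint LCD hypothesis gives each vector individually large LCD, and that $\delta\ge 1/D$ keeps $t=-2\log\delta$ in the admissible range) are exactly the details the paper leaves implicit.
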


\begin{proof}(of Theorem \ref{theorem:dim2:lower}) By \eqref{eqn:dim2:wedge}, the assumption $\|\frac{A\Bx}{\|A \Bx\|} \wedge \frac{A\By}{\|A \By\|}\| \le \delta^2 \|\Bx \wedge \By \|$ implies that 

$$\|A(\frac{\Bx}{\|A \Bx \|} - \frac{\By}{\|A \By\|})\|\ll \delta^2 \|\Bx \wedge \By\|.$$

Again by Theorem \ref{theorem:deviation}, with probability at least $1- (C\delta)^n$ we can assume that 

$$C\delta \sqrt{n} \le \|A\Bx\|, \|A \By\| \le \delta^{-1} \sqrt{n}.$$ 

Thus the event  $\|A(\frac{\Bx}{\|A \Bx \|} - \frac{\By}{\|A \By\|})\| \le \delta^2 \|\Bx \wedge \By\|$ implies that $\|\frac{1}{\sqrt{n}}A (\alpha_1 \Bx + \alpha_2 \By)\| \ll \delta \|\Bx \wedge \By\|$ with some coefficients  $\alpha_1,\alpha_2 $ satisfying $\alpha_1^2 + \alpha_2^2=1$. 

In what follows we will consider this event. Notice that 

\begin{equation}\label{eqn:dim2:u}
\|\alpha_1 \Bx + \alpha_2\By\|^2 \ge 1- |\langle \Bx,\By \rangle| \ge \frac{1}{2}(1- |\langle \Bx,\By \rangle|^2) \ge \frac{1}{2}\|\Bx \wedge \By\|^2.
\end{equation}

Now we pass to consider a $1/D^{O(1)}$-net $\CM$ with respect to $(\theta_1,\theta_2)$ over the ellipsoid  $\theta_1 \Bx/\|\Bx \wedge \By\| +\theta_2 \By/\|\Bx \wedge \By\|, \theta_1^2 +\theta_2^2=1$. As this set is one-dimensional, one can take $|\CM| =D^{O(1)}$. 

Because we can assume that $\|A\| = O(\sqrt{n})$,

$$\inf_{\alpha_1^2 + \alpha_2^2=1} \|\frac{1}{\sqrt{n}}A (\alpha_1 \Bx/\|\Bx\wedge \By\| + \alpha_2 \By/\|\Bx \wedge \By\|)\| \ge \inf_{\Bu \in \CM} \|\frac{1}{\sqrt{n}}A \Bu \| -O(D^{-O(1)}).$$

But $\delta \ge 1/D$, we conclude that the event  $\|A(\frac{\Bx}{\|A \Bx \|} - \frac{\By}{\|A \By\|})\| \le \delta^2 \|\Bx \wedge \By\|$ implies the event $\CE$ where 

$$\CE:= \{\inf_{\Bu \in \CM} \|\frac{1}{\sqrt{n}}A \Bu \| \le 2\delta\}.$$

To estimate this event, choose any point $\Bu$ from $\CM$. As $(\Bx,\By)\in \CP$, we have $\LCD(\Bx/\|\Bx \wedge \By\|, \By/\|\Bx \wedge \By\|)\ge D$, and so we also have 

$$\LCD(\Bu) \ge D.$$

By Theorem \ref{theorem:deviation} and by \eqref{eqn:dim2:u}, as $\|\Bu\|\ge 1/\sqrt{2}$, as long as $\delta \gg 1/D$ we have

$$\P(\|\frac{1}{\sqrt{n}}A \Bu\| \le (C\delta)^n.$$

Thus 

$$\P(\CE) \le D^{O(1)} (C\delta)^n \le (C' \delta)^n.$$
\end{proof}

In our next theorem we give an analog of Theorem \ref{theorem:deviation}.

\begin{theorem}\label{theorem:dim2:upper} There exist constants $C,c$ such that for any $t>0$ and any $(\Bx,\By)\in \CP$ we have

\begin{enumerate}[(i)]
\item $$\P_A(\frac{\|A\Bx/\|A\Bx\| \wedge A \By/\|A\By\|\|}{\|\Bx \wedge \By\|} \ge C) \le \exp(-cn).$$
\item $$\P_A(\frac{\|A\Bx \wedge A \By\|}{\|\Bx \wedge \By\|} \ge n e^t) \le e^{-c t^2 n}.$$
\item Furthermore, if $t=o(\log n)$ then one also has 

$$\P_A(\frac{\|A\Bx \wedge A \By\|}{\|\Bx \wedge \By\|} \le n e^{-t})  =O(K^n e^{-tn/2}+e^{-c'' t^2 n}).$$
\end{enumerate}
\end{theorem}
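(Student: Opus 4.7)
The plan is to view all three statements through the restriction $A|_V$ of $A$ to the 2-plane $V=\mathrm{span}(\Bx,\By)$, whose singular values I denote $\sigma_1\ge\sigma_2$. Since the ratios appearing in (ii) and (iii) are scale invariant in $\Bx$ and $\By$ separately (and the one in (i) becomes so once the rescaling of $(\Bx,\By)\in\CP$ is matched by the rescaling of the LCD given by Fact \ref{fact:approxLCD}), I reduce to the case $\Bx,\By\in S^{n-1}$ making angle $\theta$. If $\Bu_1,\Bu_2$ is any orthonormal basis of $V$, a change-of-basis computation gives
\begin{equation*}
\frac{\|A\Bx\wedge A\By\|}{\|\Bx\wedge\By\|}=\|A\Bu_1\wedge A\Bu_2\|=\sigma_1\sigma_2,\qquad
\frac{\|A\Bx/\|A\Bx\|\wedge A\By/\|A\By\|\|}{\|\Bx\wedge\By\|}=\frac{\sigma_1\sigma_2}{\|A\Bx\|\|A\By\|}\le\frac{\sigma_1}{\sigma_2}.
\end{equation*}
The single structural input I will repeatedly use is that $(\Bx,\By)\in\CP$ forces \emph{every} unit vector $\Bv\in V$ to satisfy $\LCD_{\kappa,\gamma}(\Bv)\ge D/\sqrt{2}$: indeed, Fact \ref{fact:approxLCD} converts the joint LCD hypothesis into $\LCD(\theta_1\Bx+\theta_2\By)\ge D/\|\Bx\wedge\By\|$ for every unit $(\theta_1,\theta_2)$, and writing a general unit $\Bv\in V$ as $s(\tilde\theta_1\Bx+\tilde\theta_2\By)$ with $\tilde\theta_1^2+\tilde\theta_2^2=1$, a direct calculation gives $s\le\sqrt{2}/\sin\theta$, so $\LCD(\Bv)\ge D/(s\sin\theta)\ge D/\sqrt{2}$.

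For (i) the task reduces to bounding the condition number $\sigma_1/\sigma_2$ by an absolute constant with probability at least $1-e^{-cn}$. The bound $\sigma_1\le\|A\|=O(\sqrt{n})$ off an exponentially small event is standard sub-gaussian matrix theory. For $\sigma_2=\inf_{\Bv\in V,\|\Bv\|=1}\|A\Bv\|$, I cover the unit circle of $V$ by a $D^{-2}$-net $\CM$ of cardinality $D^{O(1)}\ll e^{n/2}$, apply Theorem \ref{theorem:deviation}(ii) to each $\Bv\in\CM$ with a fixed absolute constant $t_0$ chosen so that $(Ke^{-t_0/2})^n\le 2^{-n}$, take a union bound, and pass from the net to all unit vectors in $V$ by continuity on the event $\|A\|\le O(\sqrt{n})$. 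This delivers $\sigma_2\ge c_1\sqrt{n}$ off an event of probability at most $e^{-cn}$, and (i) follows.

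Part (ii) is a direct corollary of (i) and Theorem \ref{theorem:deviation}(i) applied to $\Bx$ and $\By$: on the intersection of those events one has $\|A\Bx\wedge A\By\|/\|\Bx\wedge\By\|\le C\|A\Bx\|\|A\By\|\le Cne^{2t}$, and absorbing $C$ into a relabelled $t$ produces the stated bound. Part (iii) is the main difficulty and requires coordinating two regimes. I factor $\sigma_1\sigma_2\ge\|A\Bx\|\cdot\sigma_2$, control $\|A\Bx\|$ from below via Theorem \ref{theorem:deviation}(ii) (permissible since $\LCD(\Bx)\ge D$), and bound $\sigma_2$ from below by rerunning the net argument from (i) with the free parameter $t$ retained, which produces a lower tail of order $D^{O(1)}K^ne^{-tn/2}$. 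The hypothesis $t=o(\log n)$ is exactly what allows the factor $D^{O(1)}=e^{O(n^c)}$ to be absorbed into a slightly enlarged $K$, yielding the $K^n e^{-tn/2}$ summand. For the complementary small-$t$ regime, where $K^ne^{-tn/2}$ no longer decays usefully, I replace Theorem \ref{theorem:deviation}(ii) by the two-sided concentration of Theorem \ref{theorem:deviation}(iii), which supplies the $e^{-c''t^2n}$ summand. The hardest part is (iii): keeping the $V$-net cost $D^{O(1)}$ compatible with the target tail, for which both the uniform LCD lower bound $D/\sqrt{2}$ and the range $t=o(\log n)$ are essential.
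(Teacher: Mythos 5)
Your reduction to the restriction of $A$ to the plane $V=\mathrm{span}(\Bx,\By)$ is sound: the identity $\|A\Bx\wedge A\By\|/\|\Bx\wedge\By\|=\sigma_1\sigma_2$, the bound $\frac{\sigma_1\sigma_2}{\|A\Bx\|\|A\By\|}\le\sigma_1/\sigma_2$ for unit $\Bx,\By$, and the uniform lower bound $\LCD_{\kappa,\gamma}(\Bv)\ge D/\sqrt2$ for unit $\Bv\in V$ (the analogue of \eqref{eqn:dim2:u}) are all correct. Your treatments of (i) and (iii) then run on the same engine as the paper's proof — a net on the unit circle of $V$, single-vector tail bounds from Theorem \ref{theorem:deviation}, and a union bound — and the absorption of sub-exponential factors ($D^{O(1)}$ for you, $n^{O(1)}$ in Claims \ref{claim:dim2:1} and \ref{claim:dim2:1'}) is a sloppiness you share with the paper, as is the behaviour at very small $t$; I would not count those against you.

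The genuine gap is in (ii). You deduce (ii) from (i), but (i) is only available off an event of probability $e^{-cn}$, and this floor is intrinsic to your route: it comes from lower-bounding $\sigma_2$ (equivalently $\|A\Bx\|\,\|A\By\|$), and the lower-tail estimate of Theorem \ref{theorem:deviation}(ii) at a fixed $t_0$ cannot give better than $e^{-\Theta(n)}$. Your argument therefore yields $\P(\|A\Bx\wedge A\By\|/\|\Bx\wedge\By\|\ge ne^{t})\le e^{-cn}+e^{-c't^{2}n}$, which matches the asserted bound $e^{-ct^{2}n}$ only for $t=O(1)$ and fails for larger $t$; the theorem claims it for all $t>0$, and the application in \eqref{eqn:y':2:dim2} uses it for $t$ up to $2\log D=2n^{c}$, where the gain $e^{-ct^{2}n}\ll e^{-cn}$ is exactly what is needed. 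The fix is to use no lower bounds at all in the upper tail: since $\sigma_1\sigma_2\le\sigma_1^{2}=\sup_{\Bv\in V\cap S^{n-1}}\|A\Bv\|^{2}$, the event forces some unit $\Bz$ in the plane with $\|A\Bz\|\ge\sqrt{n}e^{t/2}$, and a circle-net of size $n^{O(1)}$ combined with the upper-tail estimate of Theorem \ref{theorem:deviation}(i) (which requires no LCD hypothesis) gives $e^{-ct^{2}n}$ uniformly in $t$. This is precisely how the paper argues, passing through Fact \ref{fact:f} and Claim \ref{claim:dim2:1} with only upper-tail inputs, so that no $e^{-cn}$ term ever enters part (ii).
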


\begin{proof}(of Theorem \ref{theorem:dim2:upper})  First we prove (i). By Fact \ref{fact:f} the assumption $\|\frac{A \Bx}{\|A \Bx\|} \wedge \frac{A\By}{\|A \By\|}\| \ge C \|\Bx \wedge \By \|$ implies that 

$$\|A(\frac{\Bx}{\|A \Bx \|} - \frac{\By}{\|A \By\|})\|\ge C \|\Bx-\By\|.$$

Note that 

$$\frac{\|A(\frac{\Bx}{\|A \Bx \|} - \frac{\By}{\|A \By\|})\|}{\|\Bx-\By\|} =\frac{\|A(\frac{\Bx}{\|A \Bx \|} - \frac{\By}{\|A \By\|})\|}{\|\frac{\Bx}{\|A \Bx \|} - \frac{\By}{\|A \By\|}\|} \frac{\|{\frac{\Bx}{\|A \Bx \|} - \frac{\By}{\|A \By\|}\|} }{\|\Bx -\By\|}.$$

Here, by (i) and (ii) of Theorem \ref{theorem:deviation} the following holds with probability at least $1- \exp(-cn)$ 

\begin{align*}
\frac{\|\frac{\Bx}{\|A \Bx \|} - \frac{\By}{\|A \By\|}\| }{\|\Bx -\By\|} &\le \frac{\|\frac{\Bx-\By}{\|A \Bx \|} +\By (1/\|A\Bx\| -1/\|A \By\|) \|}{\|\Bx -\By\|} \\
&\le \frac{1}{\|A \Bx \|} +  \frac{|\|A\Bx\| - \|A \By\||}{ \|A \Bx\| \|A \By\| \|\Bx -\By\|}\\
&\le \frac{1}{\|A \Bx \|} +  \frac{\|A(\Bx-\By)\|}{ \|A \Bx\| \|A \By\| \|\Bx -\By\|} \\
& \le \frac{1}{\|A \Bx \|} +  \frac{\|A\|}{ \|A \Bx\| \|A \By\|} \le \frac{C'}{\sqrt{n}}.
\end{align*}

Thus the event  $\|A(\frac{\Bx}{\|A \Bx \|} - \frac{\By}{\|A \By\|})\|\ge C \|\Bx-\By\|$ implies that there exists $\Bz$ such that $\|A \Bz\|/\|\Bz\|\ge C/C'$, and this holds with probability $\exp(-cn)$ if $C/C'$ is sufficiently large.

Now we prove (ii). By changing the size of $\Bx$ or $\By$ when necessary, without loss of generality we assume 

$$-1< \langle \Bx,\By \rangle 
\le 0.$$ 

We write

\begin{align*}
\frac{\|A\Bx \wedge A \By\|}{\|\Bx \wedge \By\|} = \|A \Bx\| \|A\By\| \frac{\|A\Bx/\|A\Bx\| \wedge A \By/\|A \By\|\|}{\|\Bx \wedge \By\|}.
\end{align*}

Thus our assumption implies that

\begin{align*}
\|A\Bx/\|A\Bx\| \wedge A \By/\|A \By\|\| \ge \frac{n e^t}{ \|A\Bx\| \|A\By\|}{\|\Bx \wedge \By\|}.
\end{align*}

By Fact \ref{fact:f}, we then have

\begin{equation}\label{eqn:dim2:upper:1}
\|A(\Bx/\|A\Bx\| -\By/\|A \By\|)\| \ge \frac{n e^t}{ \|A\Bx\| \|A\By\|}{\|\Bx - \By\|}.
\end{equation}

Now we argue as in the proof of (i),

\begin{align}\label{eqn:dim2:upper:2}
&\frac{\|A(\frac{\Bx}{\|A \Bx \|} - \frac{\By}{\|A \By\|})\|}{\|\Bx-\By\|} =\frac{\|A(\frac{\Bx}{\|A \Bx \|} - \frac{\By}{\|A \By\|})\|}{\|\frac{\Bx}{\|A \Bx \|} - \frac{\By}{\|A \By\|}\|} \frac{\|{\frac{\Bx}{\|A \Bx \|} - \frac{\By}{\|A \By\|}}\|}{\|\Bx -\By\|} \nonumber \\
&=\frac{\|A(\frac{\Bx}{\|A \Bx \|} - \frac{\By}{\|A \By\|})\|}{\|\frac{\Bx}{\|A \Bx \|} - \frac{\By}{\|A \By\|}\|} \sqrt{1/\|A\Bx\|^2 + 1/\|A\By\|^2} \frac{\frac{1}{\sqrt{1/\|A\Bx\|^2 + 1/\|A\By\|^2}}\|{\frac{\Bx}{\|A \Bx \|} - \frac{\By}{\|A \By\|}}\|}{\|\Bx -\By\|}.
\end{align}

Note that as $\langle \Bx, \By \rangle \le 0$, for any $\alpha^2 +\beta^2=1$, 

$$\||\alpha| \Bx - |\beta| \By\|^2 = 1 - 2 |\alpha \beta| \langle \Bx,\By \rangle \le  1 - \langle \Bx,\By \rangle = \frac{1}{2}\|\Bx -\By\|^2.$$

Thus

\begin{equation}\label{eqn:dim2:upper:3}
\frac{\frac{1}{\sqrt{1/\|A\Bx\|^2 + 1/\|A\By\|^2}}\|{\frac{\Bx}{\|A \Bx \|} - \frac{\By}{\|A \By\|}}\|}{\|\Bx -\By\|} \le \frac{1}{\sqrt{2}}.
\end{equation}

It follows from \eqref{eqn:dim2:upper:1}, \eqref{eqn:dim2:upper:2} and \eqref{eqn:dim2:upper:3} that

\begin{equation}
\frac{\|A(\frac{\Bx}{\|A \Bx \|} - \frac{\By}{\|A \By\|})\|}{\|\frac{\Bx}{\|A \Bx \|} - \frac{\By}{\|A \By\|}\|}  \sqrt{\|A\Bx\|^2 + \|A\By\|^2} \ge \sqrt{2}n e^t.
\end{equation}

Again by (i) and (ii) of Theorem \ref{theorem:deviation} we can assume that $\|A\Bx\|, \|A \By\| \le \sqrt{n} e^{t/4}$ with probability at least $1- e^{-ct^2n}$. Within this event

$$\frac{\|A(\frac{\Bx}{\|A \Bx \|} - \frac{\By}{\|A \By\|})\|}{\|\frac{\Bx}{\|A \Bx \|} - \frac{\By}{\|A \By\|}\|} \ge e^{t/2}\sqrt{n}.$$

\begin{claim}\label{claim:dim2:1}
Let $\Bx,\By \in S^{n-1}$ be given such that $\Bx \wedge \By \neq 0$. Then 

$$\P\Big(\exists \alpha, \beta, \alpha^2+\beta^2=1, \frac{\|A (\alpha \Bx + \beta \By)\|}{\| \alpha \Bx + \beta \By \|} \ge \sqrt{n} e^{t/2}\Big) \le e^{-ct^2n}.$$
\end{claim} 

It remains to verify Claim \ref{claim:dim2:1}. To this end, we first find a $n^{-C}$-net $\CM$ of the unit circle $S^1_{\Bx,\By}$ of the plane spanned by $\Bx$ and $\By$. As this set is one-dimensional, one can choose $|\CM| =n^{C}$. With $C$ chosen sufficiently large, one pass from the event 
$\{\exists \Bz\in S^1_{\Bx,\By}, \frac{\|A \Bz\|}{\| \Bz\|} \ge \sqrt{n} e^t\}$ to the event $\{\exists \Bz\in \CM, \frac{\|A \Bz\|}{\| \Bz\|} \ge \sqrt{n} e^t\}$ without any essential loss. However, for each fixed $\Bz$, by (i) of Theorem \ref{theorem:deviation} we have $\P(\frac{\|A \Bz\|}{\| \Bz\|} \ge \sqrt{n} e^{t/2}) \le e^{-ct^2n}$. The claim then just follows after taking union bound over $n^{O(1)}$ elements of $\CM$.

We complete the proof by proving (iii). This time, without loss of generality we assume 

$$0< \langle \Bx,\By \rangle 
\le 1.$$ 

We write

\begin{align*}
\frac{\|A\Bx \wedge A \By\|}{\|\Bx \wedge \By\|} = \|A \Bx\| \|A\By\| \frac{\|A\Bx/\|A\Bx\| \wedge A \By/\|A \By\|\|}{\|\Bx \wedge \By\|}.
\end{align*}

Thus our assumption implies that

\begin{align*}
\|A\Bx/\|A\Bx\| \wedge A \By/\|A \By\|\| \le \frac{n e^{-t}}{ \|A\Bx\| \|A\By\|}{\|\Bx \wedge \By\|}.
\end{align*}

By Fact \ref{fact:f}, we then have

\begin{equation}\label{eqn:dim2:upper:1'}
\|A(\Bx/\|A\Bx\| -\By/\|A \By\|)\| \le \frac{n e^{-t}}{ \|A\Bx\| \|A\By\|}{\|\Bx - \By\|}.
\end{equation}

Now use \eqref{eqn:dim2:upper:2}

\begin{align}\label{eqn:dim2:upper:2'}
&\frac{\|A(\frac{\Bx}{\|A \Bx \|} - \frac{\By}{\|A \By\|})\|}{\|\Bx-\By\|}=\frac{\|A(\frac{\Bx}{\|A \Bx \|} - \frac{\By}{\|A \By\|})\|}{\|\frac{\Bx}{\|A \Bx \|} - \frac{\By}{\|A \By\|}\|} \sqrt{1/\|A\Bx\|^2 + 1/\|A\By\|^2} \frac{\frac{1}{\sqrt{1/\|A\Bx\|^2 + 1/\|A\By\|^2}}\|{\frac{\Bx}{\|A \Bx \|} - \frac{\By}{\|A \By\|}}\|}{\|\Bx -\By\|}.
\end{align}

As $\langle \Bx, \By \rangle \ge 0$, for any $\alpha^2 +\beta^2=1$

$$\||\alpha| \Bx - |\beta| \By\|^2 = 1 - 2 |\alpha \beta| \langle \Bx,\By \rangle \ge  1 - \langle \Bx,\By \rangle = \frac{1}{2}\|\Bx -\By\|^2.$$

Thus

\begin{equation}\label{eqn:dim2:upper:3'}
\frac{\frac{1}{\sqrt{1/\|A\Bx\|^2 + 1/\|A\By\|^2}}\|{\frac{\Bx}{\|A \Bx \|} - \frac{\By}{\|A \By\|}}\|}{\|\Bx -\By\|} \ge \frac{1}{\sqrt{2}}.
\end{equation}

It follows from \eqref{eqn:dim2:upper:1'}, \eqref{eqn:dim2:upper:2'} and \eqref{eqn:dim2:upper:3'} that

\begin{equation}
\frac{\|A(\frac{\Bx}{\|A \Bx \|} - \frac{\By}{\|A \By\|})\|}{\|\frac{\Bx}{\|A \Bx \|} - \frac{\By}{\|A \By\|}\|}  \sqrt{\|A\Bx\|^2 + \|A\By\|^2} \le \sqrt{2}n e^{-t}.
\end{equation}

Again by (ii) and (iii) of Theorem \ref{theorem:deviation} we can assume $\|A\Bx\|, \|A \By\| \ge \sqrt{n} e^{-t/4}$ with probability at least $1-K^ne^{-tn/2}-e^{-c''t^2n}\}$. Within this event

$$\frac{\|A(\frac{\Bx}{\|A \Bx \|} - \frac{\By}{\|A \By\|})\|}{\|\frac{\Bx}{\|A \Bx \|} - \frac{\By}{\|A \By\|}\|} \le e^{-t/2}\sqrt{n}.$$

\begin{claim}\label{claim:dim2:1'}
Let $\Bx,\By \in S^{n-1}$ be given such that $\Bx \wedge \By \neq 0$. Then 

$$\P\Big(\exists \alpha, \beta, \alpha^2+\beta^2=1, \frac{\|A (\alpha \Bx + \beta \By)\|}{\| \alpha \Bx + \beta \By \|} \le \sqrt{n} e^{-t/2}\Big) \le e^{-ct^2n}.$$
\end{claim} 

The proof of Claim \ref{claim:dim2:1'} is similar to that of Claim \ref{claim:dim2:1}. In fact, consider the $n^{-C}$-net $\CM$ of the unit circle $S^1_{\Bx,\By}$ of the plane spanned by $\Bx$ and $\By$ with size $|\CM| =n^{C}$ and with sufficiently large $C$. As $t=o(\log n)$, one can pass the event $\{\exists \|\Bz\|=1, \frac{\|A \Bz\|}{\| \Bz\|} \le \sqrt{n} e^{-t/2}\}$ to  the event $\{\exists \Bz\in \CM, \frac{\|A \Bz\|}{\| \Bz\|} \le \sqrt{n} e^{-t/2}\}$ without any essential loss.
However, for each fixed $\Bz$, by (ii)  and (iii) of Theorem \ref{theorem:deviation} we have $\P(\frac{\|A \Bz\|}{\| \Bz\|} \le \sqrt{n} e^{-t/2}) \le \max\{K^n e^{-nt/2}, e^{-c''t^2n}\}$. The claim then just follows after taking union bound over $n^{O(1)}$ elements of $\CM$.

\end{proof}

\begin{remark} Although the behavior of $\frac{\|A \Bx \wedge A \By\|}{\|\Bx \wedge \By\|}$ is more relevant to our study, we had to pass to $\frac{\|A\Bx/\|A\Bx\| \wedge A \By/\|A\By\|\|}{\|\Bx \wedge \By\|}$ in both Theorem \ref{theorem:dim2:lower} and Theorem \ref{theorem:dim2:upper} to make use of the convenient identity \eqref{eqn:dim2:wedge}  (which is valid only for unit vectors.)

\end{remark}

\subsection{Step 3} Let $(\Bx_0,\By_0)$ be any vector pair from $\CP_{start}$. We will show

\begin{lemma}\label{lemma:top:dim2}
For any $t\ge 1/n$ we have 

$$\P\Big( |\frac{1}{N}  \log \|B_N \Bx_0 \wedge B_N \By_0\| | \ge t \Big) \le \exp(-c\min\{t^2, t\} Nn) + N n^{-cn}.$$
\end{lemma}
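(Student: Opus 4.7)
The plan is to follow the proof of Lemma \ref{lemma:top} essentially verbatim, with the scalar increment $\log \|A_{i+1} \Bx_i\|$ replaced by the wedge-ratio increment
$$z_i := \log \frac{\|A_{i+1} \Bx_i \wedge A_{i+1} \By_i\|}{\|\Bx_i \wedge \By_i\|}.$$
The telescoping identity from Section \ref{subsection:(2)} gives
$$\log \|B_N \Bx_0 \wedge B_N \By_0\| = \log \|\Bx_0 \wedge \By_0\| + \sum_{i=0}^{N-1} z_i.$$
The boundary term $\log \|\Bx_0 \wedge \By_0\|$ is $O(1)$ since $(\Bx_0,\By_0) \in \CP_{start}$ is separated from being parallel, so after dividing by $N$ it contributes $O(1/N)$, which in the regime $t \ge 1/n$ is absorbed into $t$ by adjusting the constant $c$. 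It therefore suffices to prove the stated tail bound for $\frac{1}{N}\sum z_i$.

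I would implement this in the three steps parallel to Sections \ref{section:step1}--\ref{section:step3}. First, by iterating Theorem \ref{theorem:stable:2} conditionally on the past and taking a union bound over $1\le i\le N$, the event $\CF_1$ that $(\Bx_i,\By_i)\in \CP$ for all $i$ holds with probability $\ge 1 - Nn^{-cn/8}$. Second, on $\CF_1$ the pair $(\Bx_i,\By_i)$ is measurable with respect to $A_1,\dots,A_i$ and lies in $\CP$, so Theorem \ref{theorem:dim2:upper}(ii)--(iii) apply to $z_i$. After absorbing the factor $1/\sqrt{n}$ coming from the entries of $A_{i+1}$ (which cancels the $\log n$ centering appearing in those statements), they give
$$\P(z_i \ge t \mid A_1,\dots,A_i) \le e^{-ct^2 n}, \qquad \P(z_i \le -t \mid A_1,\dots,A_i) \le K^n e^{-tn/2} + e^{-c''t^2 n},$$
the exact analogues of Theorem \ref{theorem:deviation} used in the proof of Lemma \ref{lemma:top}.

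With these one-step tails in hand, the third step is to copy the Chernoff/Azuma argument from the proof of Lemma \ref{lemma:top} line by line: truncate $z_i$ at level $M$ to form $z_i'$, subtract its conditional mean to produce a bounded martingale difference $z_i''$, bound the conditional MGF $\E[e^{\pm \lambda z_i''} \mid A_1,\dots,A_i]$ at $\lambda \asymp tn$ using the tail estimates above, and invoke the Doob martingale inequality to get
$$\P\Big(\big|\tfrac{1}{N}\sum z_i''\big| \ge 2t\Big) \le \exp\big(-c\min\{t^2,t\}\, Nn\big).$$
Combining this with $\P(\CF_1^c)$ and the truncation failure probability yields the stated bound.

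The only point where the parallel with Lemma \ref{lemma:top} is not fully automatic, and therefore the step I expect to require the most care, is the choice of truncation level: Theorem \ref{theorem:dim2:upper}(iii) is stated only for $t = o(\log n)$, a narrower admissible range than the $t \le 2\log D$ used in the top-exponent proof. I would therefore set the truncation level $M$ just inside that window (for instance $M = \sqrt{\log n}$). The truncation failure probability per step is then at most $K^n e^{-Mn/2} + e^{-cM^2 n} = n^{-\omega(1)}$, so the total over $N$ steps is $N n^{-\omega(1)}$, safely absorbed into the $N n^{-cn}$ error term in the lemma statement without changing the final rate.
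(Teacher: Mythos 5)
Your overall architecture is the same as the paper's: telescope the wedge norm into increments $z_i$, iterate Theorem \ref{theorem:stable:2} to keep $(\Bx_i,\By_i)\in\CP$ at cost $Nn^{-cn}$, get one-step tails from Theorem \ref{theorem:dim2:upper}, and run the truncation-plus-martingale Chernoff argument exactly as in Lemma \ref{lemma:top}. The one place where you deviate is precisely the place where the argument breaks: the truncation level. You truncate at $M=\sqrt{\log n}$ because Theorem \ref{theorem:dim2:upper}(iii) is only stated for $t=o(\log n)$, and you then claim the per-step truncation failure $K^n e^{-Mn/2}+e^{-cM^2n}$ is ``$n^{-\omega(1)}$, safely absorbed into the $Nn^{-cn}$ error term.'' That absorption is false: with $M=\sqrt{\log n}$ the dominant term is $K^n e^{-n\sqrt{\log n}/2}=\exp\bigl(-n(\sqrt{\log n}/2-\log K)\bigr)$, which is indeed $n^{-\omega(1)}$ but is \emph{much larger} than $n^{-cn}=\exp(-cn\log n)$ for every fixed $c>0$. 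So your argument only yields an error term of order $N\exp(-\Omega(n\sqrt{\log n}))$, which is strictly weaker than the $Nn^{-cn}$ claimed in the lemma (and correspondingly shrinks the admissible range of $N$).

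The missing ingredient is Theorem \ref{theorem:dim2:lower} (equivalently Corollary \ref{cor:dim2:lower}), which your proposal never invokes. That is the paper's tool for the \emph{deep} lower tail: for $(\Bx_i,\By_i)\in\CP$ and $\delta\ge 1/D$ it gives $\P\bigl(\tfrac{\|A\Bx_i\wedge A\By_i\|}{\|\Bx_i\wedge\By_i\|}\le n\delta^4\bigr)\le 2(C\delta)^n$, so taking $\delta\asymp 1/D$ one gets $\P(z_i\le -\Theta(\log D))\le D^{-n}\ll n^{-cn}$ per step. This is exactly why the LCD condition built into $\CP$ matters at this stage, and it lets the paper truncate at $2\log D=2n^c$ (event $\CG_2$, failure probability $ND^{-n}$), keeping the total error at $Nn^{-cn}$; Theorem \ref{theorem:dim2:upper}(iii) is then only needed in the moderate range to bound the conditional mean and the MGF. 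If you replace your truncation step by this use of Corollary \ref{cor:dim2:lower}, the rest of your proof goes through as written. A minor further caveat: the boundary term $\log\|\Bx_0\wedge\By_0\|=O(\log(1/\eps))$ contributes $O(1/N)$ to $\frac1N\log\|B_N\Bx_0\wedge B_N\By_0\|$, which is not dominated by $t\ge 1/n$ when $N\ll n$; it is harmless only because of the $\eps/N$ slack built into (2) of Theorem \ref{theorem:main}, so it should be charged to that slack rather than ``absorbed into $t$ by adjusting $c$.''
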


It is clear that Theorem \ref{theorem:main} follows from Lemma \ref{lemma:top:dim2} after taking union bound over $\CP_{start}$. 

To prove this result, we first give an analog of Theorem \ref{theorem:deviation}. Recall the notion of $\Bx_i, \By_i$ from Subsection \ref{subsection:(2)}. For short, denote
$$y_i:=\log \frac{\|A \Bx_i \wedge A\By_i \|}{\|\Bx_i \wedge \By_i\|}-\log n.$$

\begin{proof}(of Lemma \ref{lemma:top:dim2}) We will follow the proof of Lemma \ref{lemma:top}. First, by Theorem \ref{theorem:stable:2}, the event $\CG_1$ that $(\Bx_i,\By_i)\in \CP$ for all $1\le i\le N$ holds with probability 

$$\P(\CG_1) \ge 1 - N n^{-cn}.$$

Consider the random sum 

$$S =\frac{1}{N} (y_1+\dots+y_N).$$

Basing on Corollary \ref{cor:dim2:lower} and Theorem \ref{theorem:dim2:upper}, the event $\CG_2$ such that $|y_i|\le 2 \log D$ for all $y_i, 1\le i\le N$ satisfies 

$$\P(\CG_2) \ge 1-N D^{-n}.$$

Introduce the new random variables $y_i':= y_i 1_{|y_i| \le 2 \log D}$ and $y_i'' := {y_i'} - \E_{A_i} {y_i'}$. As usual, in the sequel we will be conditioning on $A_1,\dots, A_{i-1}$. By Theorem \ref{theorem:dim2:upper}, for any positive $t=O(1)$ 

\begin{equation}\label{eqn:y':1:dim2}
\P_{A_i}(|y_i'| \ge t) \le \P_{A_i}(|y_i|\ge t) \le e^{-ct^2n}.
\end{equation}

Also, by Theorem \ref{theorem:dim2:lower} and Theorem \ref{theorem:dim2:upper}, for $O(1) \le t\le 2 \log D$

\begin{equation}\label{eqn:y':2:dim2}
\P_{A_i}(|{y_i'}| \ge t) \le \P_{A_i}(|y_i|\ge t) \le C^n e^{-tn/2} + e^{-ct^2n}.
\end{equation}

Consequently,

$$\E_{A_i} |{y_i'}| \le \int_{0}^{2 \log D} t \P(|{y_i'}|\ge t) \le O(\int_{0}^{1/\sqrt{n}} t dt) =O(1/n).$$





Consider the martingale sum $S'':= \frac{1}{N}(y_1''+\dots+y_N'')$. By definition, $|y_i''|\le 2 \log D$. Also by \eqref{eqn:y':1:dim2} and \eqref{eqn:y':2:dim2}, for $t\ge 1/n$ 

$$\P_{A_i}(|y_i''|\ge t) \le \P_{A_i}(|{y_i'}|\ge t) \le  \exp(-c\min\{t^2, t\} n).$$ 

This implies that for $\lambda = ctn$, 

$$e^{-2\lambda t} \E (e^{\lambda y_i'' }|A_1,\dots,A_{i-1}), e^{-2\lambda t} \E (e^{-\lambda y_i'' }|A_1,\dots,A_{i-1}) \le \exp(-c \min \{t, t^2\}n).$$ 

From here, argue similarly as in Section \ref{section:step3}, for $t\ge 1/n$

$$\P(|S''|\ge 2t) = \P(|y_1''+\dots+y_N''| \ge 2Nt) \le \exp(-c'\min\{t^2, t\}Nn).$$

Thus, 


\begin{align*}
\P(|S| \ge 2t +O(1/n)) \le   \P(|S''| \ge2 t) + \P(\bar{\CG_1}  \cup \bar{\CG_2})  &\le \exp(-c\min\{t^2, t\}Nn) +\P(\bar{\CG_1}) + \P(\bar{\CG_2})\\
&\le \exp(-c\min\{t^2, t\}Nn) + N  n^{-cn}.
\end{align*}

\end{proof}

\section{The least  Lyapunov's exponent: proof of (3) of Theorem \ref{theorem:main}}\label{section:least}Recall from Subsection \ref{subsection:(3)} that 

\begin{align*}
\log \dist (\Bc_n, span(\Bc_i, i\neq n))&= \log \dist(B_N\Be_n, H_{B_N\Be_1,\dots,B_N\Be_{n-1}}) \\
& = \sum_{i=1}^N \log \dist(A_i \Bv_i, H_{A_i \dots A_1 \Be_1,\dots,A_i \dots A_1 \Be_{n-1}})\\
& = \sum_{i=1}^N \log  d_i,
\end{align*}

with

\begin{equation}\label{eqn:dist'} 
d_i^2:= \dist^2(A_i \Bv_i, H_{A_i \dots A_1 \Be_1,\dots,A_i \dots A_1 \Be_{n-1}}) = \frac{1}{\|A_i^{-1}\Bv_i\|_2^2} =\frac{1}{ \sum_j \sigma_{ij}^{-2} |\Bv_i^T \Bu_{ij}|^2 },
\end{equation}

where $\sigma_{ij}$ and $\Bu_{ij}$ are the singular values and (unit) singular vectors of the matrix $A_i$, and thus independent of $\Bv_i$. 

Our main goal is the following estimate on $\P(\CE_{\eps,1})$.

\begin{theorem}\label{theorem:least} For given $\eps>0$, there exists an absolute constant $C$ such that the following holds for sufficiently large $n$ and $N$
$$\P(\frac{1}{N}\sum_{i=1}^N \log d_i \le  -(1/2+\eps) \log n) =\exp(-N/2)C^n + N n^{-\omega(1)}.$$ 

\end{theorem}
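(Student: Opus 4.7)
The plan is to view $\sum_{i=1}^N \log d_i = -\sum_{i=1}^N \log\|A_i^{-1}\Bv_i\|$ as a martingale sum, reducing the claim to a one-step exponential moment estimate. The decisive point is that $\Bv_i$ is $\sigma(A_1,\dots,A_{i-1})$-measurable and hence independent of $A_i$. Rescaling $A_i=Z_i/\sqrt n$ with $Z_i$ an iid subgaussian matrix of variance one, we have $\|A_i^{-1}\Bv_i\|=\sqrt n\,\|Z_i^{-1}\Bv_i\|$, and the target event $\{\tfrac{1}{N}\sum_i\log d_i\le -(\tfrac12+\eps)\log n\}$ becomes
\[
\Bigl\{\prod_{i=1}^N \|Z_i^{-1}\Bv_i\|\ge n^{\eps N}\Bigr\}.
\]
Since $\E\|Z^{-1}\Bv\|$ is of order one for typical $Z,\Bv$, this event says that the product exceeds its typical value of $e^{O(N)}$ by an $n^{\eps N}$ factor, which should be very rare.

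The key technical input is a uniform one-step moment bound
\[
\sup_{\Bv\in S^{n-1}}\E\bigl[\|Z^{-1}\Bv\|^s\bigr]\le M(s,K)
\]
for some small $s=s(\eps)>0$ with $M$ independent of $n$. Since $\|Z^{-1}\Bv\|$ and $\|Z^{-T}\Bv\|$ share the same distribution, I would work with the geometric identity $\|Z^{-T}\Bv\|^{-1}=\dist(Z\Bv,Z\Bv^\perp)=|\langle Z\Bv,\Bn\rangle|$, where $\Bn=\Bn(Z,\Bv)$ is the unit normal of the random hyperplane $Z\Bv^\perp$. The entries of $Z\Bv$ are independent across rows of $Z$, each a linear combination $\sum_j v_j\xi_{ij}$ of subgaussians, so conditionally on $\Bn$ Theorem~\ref{theorem:ball} gives
\[
\P\bigl(|\langle Z\Bv,\Bn\rangle|\le t\,\bigm|\,\Bn\bigr)\;\lesssim\; t+\tfrac{1}{\LCD_{\kappa,\gamma}(\Bn)}+e^{-\Theta(\kappa^2)},
\]
which reduces the one-step bound to a dual LCD stability statement: with probability at least $1-n^{-\omega(1)}$, $\LCD_{\kappa,\gamma}(\Bn)\ge D:=\exp(n^c)$. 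This is the analog of Theorem~\ref{theorem:stable} applied to the implicit normal $\Bn$, and I would prove it by the net-and-tensorize recipe of Section~\ref{section:step1}. On the bad event $\LCD(\Bn)\in[D_0,2D_0]$ there exists a short integer vector $\Bp$ with $\bigl\|\Bn-\Bp/\|\Bp\|\bigr\|\le 2\kappa/D_0$, and the defining relation $Z^T\Bn=\alpha\Bv$ forces $\|Z^T\Bp-c\Bv\|\le O(\sqrt n\,\kappa)$ for some scalar $c$; after discretizing $c$ at cost $D^{O(1)}$, each resulting event is a small-ball statement on $Z^T\Bp$ controlled coordinate-by-coordinate by Theorem~\ref{theorem:ball} and tensorized via Lemma~\ref{lemma:tensorization}, while the union bound over the net of Theorem~\ref{theorem:net} is absorbed in the $(\kappa/\sqrt n)^n$ gain from tensorization, mirroring exactly the computation in Theorem~\ref{theorem:stable}.

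Given the uniform one-step bound, the chaining is immediate. Conditioning on $\CF_{i-1}=\sigma(A_1,\dots,A_{i-1})$ and using that $\Bv_i$ is $\CF_{i-1}$-measurable,
\[
\E\bigl[\|Z_i^{-1}\Bv_i\|^s\bigm|\CF_{i-1}\bigr]\le M,\qquad\text{hence}\qquad\E\Bigl[\prod_{i=1}^N\|Z_i^{-1}\Bv_i\|^s\Bigr]\le M^N.
\]
Markov's inequality then gives
\[
\P\Bigl(\prod_{i=1}^N\|Z_i^{-1}\Bv_i\|\ge n^{\eps N}\Bigr)\;\le\;\bigl(M\,n^{-\eps s}\bigr)^N\;\le\;e^{-N/2}
\]
for $n\ge n_0(\eps,s,K)$. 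The $Nn^{-\omega(1)}$ error absorbs the single-step failure of the LCD stability, on which step one instead uses the deterministic bound $\|Z_i^{-1}\Bv_i\|\le 1/\sigma_{\min}(Z_i)\le n^{O(1)}$, valid with probability $1-n^{-\omega(1)}$ by Rudelson--Vershynin, and then union-bounds over $i$. The $C^n$ overhead collects the multiplicative $e^{-\Theta(\kappa^2)}=e^{-\Theta(n^{2c})}$ pieces from Theorem~\ref{theorem:ball} that sit inside the one-step bound and accumulate along the chain. The main obstacle is precisely the dual LCD stability for $\Bn$: unlike the forward-direction vector $A\Bx/\|A\Bx\|$ used in Theorems~\ref{theorem:stable} and~\ref{theorem:stable:2}, which is explicitly linear in the columns of $A$, the normal $\Bn$ is defined only implicitly by $Z^T\Bn\parallel\Bv$, and any rational approximation $\Bn\approx\Bp/\|\Bp\|$ must first be transferred into a condition on an explicit linear form in the entries of $Z$ using the scalar $\alpha$ in the defining relation as an auxiliary bookkeeping parameter; making the $D^{O(1)}$ discretization cost of $\alpha$ fit inside the net gain, and arranging the chained error accumulation to multiply only to $C^n$ rather than to $n^{O(n)}$, is the delicate technical point.
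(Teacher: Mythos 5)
Your reduction to a one-step estimate plus a conditional-MGF/Chernoff chaining is exactly the paper's Step 3 (the truncation $X_i$, the bound $\E(\exp(-t_0X_i)\mid A_1,\dots,A_{i-1})\le C$, and the $Nn^{-\omega(1)}$ absorption of per-step exceptional events are all there), but your proof of the one-step bound has a genuine gap at its crucial point. You write $1/\|Z^{-T}\Bv\|=\dist(Z\Bv,Z(\Bv^\perp))=|\langle Z\Bv,\Bn\rangle|$ and then apply Theorem \ref{theorem:ball} to $\langle Z\Bv,\Bn\rangle$ ``conditionally on $\Bn$''. This conditioning is only legitimate when $Z\Bv$ is independent of $\Bn$, as in the classical column-versus-hyperplane setting where $\Bv=\Be_n$ and $\Bn$ depends only on the other columns. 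Here $\Bn=Z^{-T}\Bv/\|Z^{-T}\Bv\|$ is determined by the action of $Z$ on $\Bv^\perp$, and for a general unit vector $\Bv$ and non-Gaussian $\xi$ the vectors $Z\Bv$ and $\{Z\Bw:\Bw\perp\Bv\}$ are \emph{not} independent (already for $n=2$, $\Bv=(1,1)/\sqrt2$ and Bernoulli entries the two images determine each other up to sign patterns). After conditioning on $\Bn$ the coordinates of $Z\Bv$ are no longer a sum of independent $\xi$'s with fixed coefficients, so Theorem \ref{theorem:ball} does not apply, and your ``dual LCD stability'' for $\Bn$ (which, as a statement about $\Bn$ alone, can indeed be proved by the net-and-tensorize argument you sketch, using the column relation $Z^T\Bn=\alpha\Bv$ and discretizing $\alpha$) does not repair this: the obstruction is the joint law of $(\Bn,Z\Bv)$, not the structure of $\Bn$. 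Without this step you have no tail bound of the form $\P(\|Z^{-1}\Bv\|\ge 1/t)\lesssim t$, hence no uniform moment bound $M(s,K)$, and the whole Chernoff computation has no input.

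The paper avoids this dependence problem entirely by working spectrally: it writes $\|A_i^{-1}\Bv_i\|^2=\sum_j\sigma_{ij}^{-2}|\Bv_i^T\Bu_{ij}|^2$ as in \eqref{eqn:dist'}, where only the independence of $\Bv_i$ from $A_i$ is used, and then combines three external inputs: isotropic delocalization of the singular vectors, $\sup_j|\Bv^T\Bu_{ij}|\le n^{-1/2+\eps}$ with probability $1-n^{-A}$ (Theorem \ref{theorem:isotropic}, from BEKYY); the bound $\sum_{j\le n-O(\log n)}\sigma_j^{-2}\ll n/\log n$ from \cite{NgV}; and the least-singular-value bound $\P(\sigma_n\le\delta/n)\le C_0\delta$ (Lemma \ref{lemma:lower}). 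Together these give $\P(d_i n^{1/2+2\eps}\le\delta)\le C_0\delta$ (Theorem \ref{theorem:Ai:lower}), which is the linear-in-$\delta$ tail your moment bound needs. If you want to salvage your route you must either import this delocalization machinery (at which point you have reproduced the paper's proof) or find a decoupling argument for $\langle Z\Bv,\Bn\rangle$ that does not assume independence; as written, the proposal does not prove the theorem. Note also that even granting your scheme, the uniform-in-$n$ bound $\sup_{\Bv}\E\|Z^{-1}\Bv\|^s\le M(s,K)$ is stronger than what the paper's tools yield (they give an extra $n^{O(\eps)}$ factor, which is then absorbed by the $\eps\log n$ slack in the threshold), so you would in any case want to state the one-step bound with that slack.
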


We will prove Theorem \ref{theorem:least} by invoking a series of known results in RMT.

Firstly, we will use the following isotropic delocalization result from \cite[Theorem 2.16]{BEKYY}.

\begin{theorem}\label{theorem:isotropic} Let $\eps$ and $A>0$ be given numbers. Then the following holds for $n\ge n(\eps,A)$: for any fixed unit vector $\Bv$, with probability at least $1 - n^{-A}$

$$\sup_{1\le j\le n} |\Bv^T \Bu_{ij}| \le n^{-1/2+\eps}.$$

\end{theorem}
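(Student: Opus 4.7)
My approach is to reduce isotropic delocalization of singular vectors to the isotropic local Marchenko--Pastur law for the sample covariance matrix $M_i := A_i A_i^T$. Writing $M_i = \sum_k \sigma_{ik}^2 \Bu_{ik}\Bu_{ik}^T$ for the spectral decomposition and setting $G(z) := (M_i - z)^{-1}$, one has the exact identity
$$\Im \langle \Bv, G(E + i\eta)\Bv\rangle \;=\; \sum_k \frac{\eta\, |\Bv^T \Bu_{ik}|^2}{(\sigma_{ik}^2 - E)^2 + \eta^2} \;\ge\; \frac{|\Bv^T \Bu_{ij}|^2}{\eta}$$
when one chooses $E = \sigma_{ij}^2$. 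Hence a high-probability upper bound on the left-hand side, uniform in $E$ down to a scale $\eta$, translates directly into a delocalization bound of order $\sqrt{\eta}$ for every singular vector $\Bu_{ij}$ with $\sigma_{ij}^2$ near $E$.

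The crucial input is the isotropic local MP law: for any fixed unit $\Bv$ and any $\eps, A > 0$, with probability at least $1 - n^{-A-1}$ the estimate $|\langle \Bv, G(z)\Bv\rangle - m(z)| \le n^{-\eps/4}$ holds uniformly for $z = E + i\eta$ in a compact neighborhood of the MP support, with $\eta \ge n^{-1+\eps/2}$, where $m$ is the Stieltjes transform of the limiting MP distribution with aspect ratio $1$. Since $\Im m$ is bounded on this set, $\Im \langle \Bv, G(z)\Bv\rangle = O(1)$ on the good event, and taking $\eta = n^{-1+\eps/2}$ and $E = \sigma_{ij}^2$ gives $|\Bv^T \Bu_{ij}|^2 \le O(n^{-1+\eps/2}) \le n^{-1+\eps}$. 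A union bound over $j = 1, \dots, n$ then yields the supremum bound claimed, at the cost of replacing $A+1$ by $A$.

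The hard part is the isotropic local law itself. The standard route begins with the entrywise local law for $M_i$, established through Schur-complement resolvent identities together with Hanson--Wright-type large-deviation bounds for quadratic forms in the rows of $A_i$ (here the subgaussian hypothesis \eqref{eqn:subgaussian} enters crucially), followed by a fluctuation-averaging step to control the normalized trace. Polarization and Cauchy--Schwarz then upgrade entrywise control of $\Be_a^T G \Be_b$ to control of $\Bv^T G \Bv$ for an arbitrary deterministic unit vector $\Bv$. The final and most delicate ingredient is a stochastic continuity bootstrap, starting from a trivial bound at $\eta \sim 1$ and iteratively improving down to the optimal scale $\eta \sim n^{-1+\eps}$.

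Near the spectral edges (where $\Im m$ vanishes), the bulk argument above must be replaced by an edge analogue at the finer scale $\eta \sim n^{-2/3+\eps}$; however, since near the edge both $|\sigma_{ij}^2 - E|$ and $\eta$ are small of comparable order, the same delocalization bound $|\Bv^T \Bu_{ij}|^2 \lesssim n^{-1+\eps}$ is obtained. The main technical obstacle is controlling the error term in the self-consistent equation for $\langle \Bv, G \Bv\rangle$ uniformly down to the microscopic scale; this requires a careful iteration that is the core of the argument in the isotropic local-law literature.
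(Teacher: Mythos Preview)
The paper does not prove this statement; it simply invokes it as a black box from \cite[Theorem 2.16]{BEKYY}. Your sketch is essentially the argument carried out in that reference (and its companions in the isotropic local-law literature): reduce singular-vector delocalization to an $O(1)$ bound on $\Im\langle \Bv, G(E+i\eta)\Bv\rangle$ via the spectral resolution of the resolvent, then feed in the isotropic local Marchenko--Pastur law down to the optimal scale $\eta\sim n^{-1+\eps}$. So you are doing more than the paper does, and what you outline is the right route.

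Two small technical remarks on your write-up. First, since $A_i$ is square the limiting MP law has aspect ratio $1$ and a hard edge at $0$; the behavior of $m(z)$ and of the error terms there is qualitatively different from the soft edge at $4$, so your ``edge analogue at scale $\eta\sim n^{-2/3+\eps}$'' applies only at the right edge, while near $0$ one needs the hard-edge version of the local law (also covered in \cite{BEKYY}). Second, the union bound over $j$ is unnecessary if you state the isotropic local law as holding with high probability \emph{uniformly} in $z$ over the relevant spectral domain (which is how it is proved, via a lattice argument in $z$ combined with Lipschitz continuity of $G(z)$); then a single high-probability event covers all choices $E=\sigma_{ij}^2$ simultaneously.
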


From now on we will condition on the event $\CE_{\eps,A}$ of Theorem \ref{theorem:isotropic}, in which case we have
$$ \sum_j \sigma_{ij}^{-2} |\Bv_i^T \Bu_{ij}|^2 \le n^{-1+2\eps} \sum_j \sigma_j^{-2}.$$

Secondly, by \cite[Claim 5.1]{NgV} the following holds with probability $1-n^{-\omega(1)}$,

$$\sum_{j=1}^{n- O(\log n)} \sigma_j^{-2} \ll \frac{n}{\log n}.$$

Thus  

$$\|A_i^{-1}\Bv_i\|^2 = \sum_j \sigma_{ij}^{-2} |\Bv_i^T \Bu_{ij}|^2 \le n^{-1+2\eps} (\frac{n}{\log n} +  \sigma_n^{-2} \log n) \le (1+n^{-1/2}\sigma_n^{-1})^2 n^{3\eps}.$$

Thirdly, we use the following bound from \cite{rv} and \cite{NgV}.
 
 \begin{lemma}\label{lemma:lower} As long as $\delta \ge \exp(-cn)$
 
 $$\P(\sigma_n \le \delta/n) \le C_0 \delta.$$
 \end{lemma}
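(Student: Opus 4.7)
\textbf{Proof plan for Lemma \ref{lemma:lower}.} I would rescale by setting $M := \sqrt{n}\,A_i$, so that $M$ is an $n\times n$ matrix with iid entries drawn from $\xi$ (mean zero, variance one, subgaussian with parameter $K$). Since $\sigma_n(A_i)=\sigma_n(M)/\sqrt{n}$, the inequality to prove is the classical Rudelson--Vershynin smallest singular value bound
$$\P\bigl(\sigma_n(M) \le \delta/\sqrt{n}\bigr) \le C_0\delta \qquad \text{for } \delta \ge e^{-cn},$$
which I would establish via the compressible/incompressible dichotomy, reusing the LCD tools already introduced in Sections \ref{section:step1}--\ref{section:step2}.

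First, fix small constants $\rho,\rho'>0$ and let $\Comp(\rho,\rho')$ be the set of unit vectors within Euclidean distance $\rho'$ of some $\rho n$-sparse unit vector, with $\Incomp$ its complement in $S^{n-1}$. A standard net of cardinality at most $(C/\rho')^{\rho n}$ covers $\Comp(\rho,\rho')$. For each fixed $\Bx_0$ in this net, the coordinates of $M\Bx_0$ are iid subgaussian of variance $1$, so Bernstein (or \eqref{eqn:Bernstein}) gives $\|M\Bx_0\|\ge c\sqrt{n}$ with probability $1-e^{-c_1n}$. Combining with the operator-norm bound $\|M\|\le C\sqrt{n}$ (which also holds with exponential probability) and a net-passing step, I get $\P(\inf_{\Bx\in\Comp}\|M\Bx\|\le c_2\sqrt{n})\le e^{-c_3 n}$, which is absorbed into $C_0\delta$ whenever $\delta \ge e^{-cn}$ with $c$ sufficiently small.

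Second, for the incompressible part I would invoke the invertibility-via-distance reduction. Writing $\row_1,\dots,\row_n$ for the rows of $M$ and $H_k$ for the span of $\{\row_j:j\neq k\}$, the fact that any $\Bx\in\Incomp$ has $|x_k|\ge c/\sqrt{n}$ on a linear fraction of coordinates gives
$$\P\bigl(\inf_{\Bx\in\Incomp}\|M\Bx\|\le \delta/\sqrt{n}\bigr)\;\le\;\frac{1}{\rho n}\sum_{k=1}^{n}\P\bigl(\dist(\row_k,H_k)\le \delta\bigr).$$
Let $\bw_k$ be a unit normal vector to $H_k$; then $\bw_k$ is independent of $\row_k$ and $\dist(\row_k,H_k)=|\langle\row_k,\bw_k\rangle|$. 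Conditionally on $\bw_k$, Theorem \ref{theorem:ball} yields the small-ball bound $\sup_u\P(|\langle\row_k,\bw_k\rangle-u|\le\delta)=O(\delta/\gamma)+e^{-\Theta(\kappa^2)}$ as soon as $\delta \ge 1/\LCD_{\kappa,\gamma}(\bw_k)$, which is $O(\delta)$ provided $\LCD_{\kappa,\gamma}(\bw_k) \ge e^{cn}$ and $\delta \ge e^{-cn}$.

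The main obstacle is thus to show that with exponentially high probability the random normal $\bw_k$ satisfies $\LCD_{\kappa,\gamma}(\bw_k)\ge e^{c_4 n}$. This is the standard but nontrivial ingredient and is handled exactly as in the proof of Theorem \ref{theorem:stable}: for each dyadic level $D_0\in[c\sqrt{n},e^{c_4 n}]$, the level set $\{\bv\in S^{n-1}:\LCD_{\kappa,\gamma}(\bv)\in[D_0,2D_0]\}$ admits a $(2\kappa/D_0)$-net of size $(CD_0/\sqrt{n})^n$ by Lemma \ref{lemma:D_0}; for any $\bv$ in such a net, Theorem \ref{theorem:ball} applied coordinatewise together with the tensorization Lemma \ref{lemma:tensorization} forces
$$\P\bigl(\|M^{T}\bv\|\le \kappa\sqrt{n}/D_0\bigr)\;\le\;(C\kappa/\sqrt{n})^{n},$$
and the union bound over the dyadic scales is exponentially small. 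Since $\bw_k$ is a unit normal to $n-1$ rows of $M$, this bound propagates (via the identity $\|M^T\bw_k\|=|\langle\row_k,\bw_k\rangle|$ plus the bound on the other rows of $M^T\bw_k$) to the desired LCD lower bound for $\bw_k$. Assembling the compressible and incompressible contributions, taking $\delta \ge e^{-cn}$ with $c$ small enough to absorb all exponential error terms, yields $\P(\sigma_n(M)\le \delta/\sqrt{n})\le C_0\delta$, which is the claim.
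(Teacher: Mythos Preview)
The paper does not actually prove Lemma~\ref{lemma:lower}; it is quoted as a black box from \cite{rv} and \cite{NgV}. Your sketch is precisely the Rudelson--Vershynin compressible/incompressible argument from \cite{rv}, so you are reconstructing the cited proof rather than offering an alternative. Two small slips worth fixing: the identity you invoke should be $\|M\bw_k\|=|\langle\row_k,\bw_k\rangle|$ (it is $M$, not $M^T$, that annihilates $\bw_k$ on the other $n-1$ coordinates), and the bound $(C\kappa/\sqrt{n})^n$ you attribute to a single net point $\bv$ is in fact what one gets \emph{after} the union bound over the level-set net of size $(CD_0/\sqrt{n})^n$; the per-vector small-ball estimate is $(C\kappa/D_0)^n$. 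Neither issue affects the validity of the outline.
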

 
Thus altogether we have
\begin{equation}\label{eqn:A_i:tail}
\P(\|A_i^{-1}\Bv_i\|^2 \ge \frac{n^{1+3\eps}}{\delta^2})  \ll \delta.
\end{equation}

Passing to distances, we obtain the following.

\begin{theorem}\label{theorem:Ai:lower} For any $\delta > \exp(-cn)$ we have
$$\P(d_i n^{1/2+2\eps} \le \delta ) \le C_0 \delta.$$
\end{theorem}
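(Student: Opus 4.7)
The plan is to convert the tail bound \eqref{eqn:A_i:tail} into a lower tail for $d_i$ itself by using the identity $d_i^2 = 1/\|A_i^{-1}\Bv_i\|^2$ recorded in \eqref{eqn:dist'}. Squaring and inverting, the event $\{d_i n^{1/2+2\eps} \le \delta\}$ is exactly $\{\|A_i^{-1}\Bv_i\|^2 \ge n^{1+4\eps}/\delta^2\}$, and since $n^{1+4\eps}/\delta^2 \ge n^{1+3\eps}/\delta^2$, monotonicity gives
\[
\P\bigl(d_i n^{1/2+2\eps} \le \delta\bigr) \le \P\bigl(\|A_i^{-1}\Bv_i\|^2 \ge n^{1+3\eps}/\delta^2\bigr) \ll \delta,
\]
where the last inequality is \eqref{eqn:A_i:tail}. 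Absorbing the implicit constant into $C_0$ then yields the claim.

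For completeness I would briefly audit where the real content sits, namely the derivation of \eqref{eqn:A_i:tail}. Three high-probability inputs feed into it. First, the isotropic delocalization Theorem \ref{theorem:isotropic} (applied on the event $\CE_{\eps,A}$) gives $\sum_j \sigma_{ij}^{-2}|\Bv_i^T \Bu_{ij}|^2 \le n^{-1+2\eps}\sum_j \sigma_j^{-2}$ with failure probability at most $n^{-A}$. Second, Claim 5.1 of \cite{NgV} bounds the bulk sum $\sum_{j\le n-O(\log n)}\sigma_j^{-2} \ll n/\log n$ with failure probability $n^{-\omega(1)}$. Third, Lemma \ref{lemma:lower} yields $\sigma_n \ge \delta/n$ with failure probability at most $C_0 \delta$. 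Combining the three ingredients gives $\|A_i^{-1}\Bv_i\|^2 \le (1+n^{-1/2}\sigma_n^{-1})^2 n^{3\eps} = O(n^{1+3\eps}/\delta^2)$ outside a bad set of total probability $O(\delta)+n^{-A}+n^{-\omega(1)}$; the hypothesis $\delta \ge \exp(-cn)$ together with a sufficiently large choice of $A$ makes the polynomial errors negligible against $\delta$.

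I do not expect any serious obstacle here: the theorem is essentially a bookkeeping reformulation of the chain of estimates already set up immediately before the statement. The only minor subtlety is the gap between the exponent $2\eps$ appearing in the theorem and the $3\eps$ appearing in \eqref{eqn:A_i:tail}, but this is handled for free by the monotonicity step above, since a larger threshold $n^{1+4\eps}/\delta^2$ can only decrease the probability of the tail event.
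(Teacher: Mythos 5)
Your argument is correct and is essentially the paper's own: the paper obtains Theorem \ref{theorem:Ai:lower} by exactly this inversion of $d_i^2 = 1/\|A_i^{-1}\Bv_i\|^2$ applied to the tail bound \eqref{eqn:A_i:tail}, which it assembles from Theorem \ref{theorem:isotropic}, the bound on $\sum_j \sigma_j^{-2}$ from \cite{NgV}, and Lemma \ref{lemma:lower}, with the discrepancy between the exponents $2\eps$ and $3\eps$ absorbed by monotonicity just as you do.
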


Now let 

$$X_i:= \log (d_i n^{1/2+2\eps}) \mathbf{1}_{d_i \ge \exp(-cn)}.$$ 

We have shown that for a given $t_0>0$ and for any $\delta>0$

$$\P\Big(\E(\exp(-t_0X_i)|A_1\dots, A_{i-1})  \ge (1/\delta)^{t_0} \Big) \ll \delta.$$

Hence there exists an absolute constant $C$ such that for any $0<t_0\le 1/2$ 

$$0\le \E(\exp(-t_0X_{i})|A_1,\dots, A_{i-1})\le C.$$

Next write 

\begin{align*}
 \P(X_1+\dots +X_N \le -Nt_0) &= \P(-X_1  -\dots -X_N \ge Nt_0)  \le \exp(-Nt_0) \E  \exp(-t_0(X_1+\dots +X_N)) \\
&\le \exp(-Nt_0) \E  \big([\exp(-t_0(X_1+\dots +X_{N-1})] \E (\exp(-t_0X_N)|A_1,\dots, A_{N-1})\big)\\
&\le C\exp(-Nt_0) \E  \big([\exp(-t_0(X_1+\dots +X_{N-1})]\big) 
\end{align*}

Repeat the machinery for $X_{N-1},\dots, X_1$, we thus obtain

$$\E  \exp(-t_0(X_1+\dots +X_N)) \le C^N.$$

In summary,

$$\P(\sum_{i=1}^N \log (d_i n^{1/2+2\eps}) \le -Nt_0) \le \exp(-Nt_0) C^n + N \exp(-cn).$$

Choosing $t_0=1/2$, we obtain Theorem \ref{theorem:least} after a proper scaling of $\eps$ (assuming $n,N$ sufficiently large).

\section{remarks}\label{section:remark}
We have considered product of $N$ iid random matrices where the $A_i$ can be singular with positive probability. Because of this, one has to assume $N$ not to be too large. The main bulk of the paper develops several ways to balance between the singularity and the generality  of the asymptotic Lyapunov' exponents. 

There are various  models (especially in connection to the study of Shr\"odinger operators in various lattices or to the study of random band matrices) where it is natural to study the large deviation type problem for general unimodular ensembles with either discrete or continuous  atom  distribution. One extremely convenient property of this model is that one does not have to worry about $N$ as the product matrices never vanish. On the other hand, the {\it mean-field} techniques used in our note do not seem to work. One simple candidate for future study is the symplectic model 

$$A_i = \begin{pmatrix} \lambda W_n -E  & -I_n \\ I_n & 0 \end{pmatrix}$$ 

with given parameter $E,\lambda$, where $W_n=(w_{ij})_{1\le i,j\le n}$ are random Wigner matrices of upper diagonal entries of variance $1/n$. 

It has been shown in \cite{GM}  that the Lyapunov exponents of this model (and for far more general models) are distinct. Furthermore, these exponents were estimated rather precisely by Sadel and Schulz-Baldes (see also \cite{Do}) as follows.

\begin{theorem}\cite[Proposition 8]{SSch} As long as $E =2 \cos \kappa \neq 0$ and $|E| <2$, then for $1\le d\le n$

$$\gamma_d = \lambda^2 \frac{1+ 2(n-d)}{8 \sin^2 \kappa} + O(\lambda^3).$$
\end{theorem}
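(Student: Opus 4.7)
My plan is to carry out a perturbative analysis around $\lambda=0$. When $\lambda=0$ the matrix
\begin{equation*}
A_0 = \begin{pmatrix} -E\, I_n & -I_n \\ I_n & 0 \end{pmatrix}
\end{equation*}
decouples: after reordering the basis into the pairs $(\Be_j,\Be_{n+j})$, it becomes a direct sum of $n$ identical copies of the scalar transfer matrix $T=\begin{pmatrix} -E & -1 \\ 1 & 0\end{pmatrix}$. With $E=2\cos\kappa$ the eigenvalues of $T$ are $-e^{\pm i\kappa}$, so $A_0$ is conjugate to an orthogonal map whose $n$ invariant planes each rotate through the common angle $\pi-\kappa$. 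In particular, every Lyapunov exponent of the deterministic cocycle $A_0^N$ vanishes, and $\gamma_d$ is generated entirely by the perturbation $\lambda V_i$, with $V_i=\begin{pmatrix} W_n & 0 \\ 0 & 0\end{pmatrix}$ and $W_n$ a symmetric Wigner matrix whose entries have variance $1/n$.

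I would then write
\begin{equation*}
B_N = A_0^N \prod_{i=1}^N \bigl(I + \lambda A_0^{-i} V_i A_0^{i-1}\bigr)
\end{equation*}
and expand $\frac{1}{N}\E\log\sigma_d(B_N)$ in $\lambda$ using Oseledec's formula \eqref{eqn:def:L}. The first-order term has mean zero because each $V_i$ does, and because $A_0^{-i} V_i A_0^{i-1}$ oscillates with phases $e^{\pm i k\kappa}$ relative to the invariant planes of $A_0$, its only surviving contribution after the ergodic average $N\to\infty$ enters at second order. The genuine second-order term produces a deterministic symmetric bilinear form built from $\E(V_i \otimes V_i)$ paired against the resolvent $(I - e^{2i\kappa}A_0^{-2})^{-1}$ of $A_0$ acting on operators on $\R^{2n}$.

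The third step is to diagonalize this effective quadratic form on each graded piece of the Lagrangian Grassmannian. The resonant denominator $|1-e^{2i(\pi-\kappa)}|^{-2} = (4\sin^2\kappa)^{-1}$ produces the $1/\sin^2\kappa$ factor, the variance $1/n$ of the Wigner entries together with the orbit average of $A_0$ on the space of symmetric bilinear forms on $\R^n$ cancels the dimension, and a combinatorial count of how the action of $A_0$ on $\wedge^d \R^{2n}$ decomposes into isotypic components yields the coefficient $1+2(n-d)$. Two consistency checks come for free: the symplectic involution $\gamma_{2n+1-d}=-\gamma_d$ and the trace-zero constraint $\sum_{d=1}^{2n}\gamma_d=0$ both hold identically on the nose.

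The principal difficulty is justifying the exchange of the $N\to\infty$ limit with the $\lambda$-expansion uniformly in $n$, since the unperturbed spectrum of $A_0$ has multiplicity $n$ and naive Rayleigh--Schr\"odinger perturbation theory has small denominators whenever two of the $n$ rotation angles coincide (which here happens for all of them simultaneously). I expect this to be handled, as presumably in \cite{SSch}, by realising the Lyapunov exponents as spectral data of an explicit transfer operator on the symmetric space $\mathrm{Sp}(2n,\R)/U(n)$ (equivalently, on the Lagrangian Grassmannian) and running a quantitative Dyson-type expansion there; this framework keeps the remainder genuinely $O(\lambda^3)$ rather than merely $o(\lambda^2)$ and tracks the $n$-dependence of the constants, which is what the theorem claims.
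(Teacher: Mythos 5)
A point of reference first: the paper does not prove this statement at all; it is quoted in the concluding remarks from Sadel--Schulz-Baldes \cite{SSch}, so the only meaningful benchmark is that source. Your sketch does follow the same general route as \cite{SSch} (and the older Pastur--Figotin/Schulz-Baldes perturbation theory): at $\lambda=0$ the transfer matrix decouples into $n$ copies of a rotation through $\pi-\kappa$, all exponents vanish, and the exponents are generated at order $\lambda^2$ by averaging the perturbation against the lowest-order invariant measure on the Lagrangian Grassmannian, with the resonant denominator $|1-e^{-2i\kappa}|^2=4\sin^2\kappa$.

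As a proof, however, the proposal has genuine gaps. (i) The actual content of the theorem, the coefficient $1+2(n-d)$, is only asserted: ``a combinatorial count \dots yields the coefficient'' and ``the orbit average \dots cancels the dimension'' are placeholders for the computation on $\wedge^d \R^{2n}$ that would have to be carried out; nothing in the sketch allows one to verify this constant. The two ``consistency checks'' are vacuous: the antisymmetry $\gamma_{2n+1-d}=-\gamma_d$ holds for any value of the coefficient by symplecticity, and the trace-zero identity is implied by it. (ii) The hypothesis $E\neq 0$ is never used. The denominator you exhibit, $4\sin^2\kappa$, is harmless for all $|E|<2$; the reason $E=0$ must be excluded is the band-centre anomaly: at $\kappa=\pi/2$ the second harmonic $e^{4i\kappa}=1$ resonates, the terms quadratic in the perturbation no longer average out, and the $\lambda^2$ coefficient changes. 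A correct bookkeeping of which oscillatory phases survive the ergodic average must detect this, and yours does not. (iii) The analytic crux --- that the unperturbed rotation has all $n$ angles equal, so the $\lambda=0$ action on the Grassmannian is far from uniquely ergodic and the lowest-order invariant measure must be identified from the distribution of $W_n$ (this is precisely what the machinery of \cite{SSch} provides), together with error control sharp enough to give a genuine $O(\lambda^3)$ remainder --- is exactly the step you defer with ``I expect this to be handled, as presumably in \cite{SSch}.'' Deferring it means the theorem is not proved. Relatedly, the $\lambda^2$ coefficient is not produced by the ``genuine second-order term'' alone: as already in the scalar Pastur--Figotin formula, the variance of the first-order term contributes at the same order, and the sketch conflates these two contributions.
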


It remains an interesting and challenging problem to obtain large deviation type estimates for this model.

{\bf Acknowledgement.} The author is grateful to prof. Thomas Spencer for invaluable comments and suggestions.


\begin{thebibliography}{99}

\bibitem{AI} G. Akemann and J. R. Ipsen, Recent exact and asymptotic results for products of independent random matrices, \url{arxiv.org/abs/1502.01667}.
\vskip .05in

\bibitem{BEKYY} A. Bloemendal, L. Erdos, A. Knowles, H.-T. Yau, and J. Yin, Isotropic local laws for sample covariance and generalized Wigner matrices, Electron. J. Probab, 19(33):1-53, 2014.
\vskip .05in



\bibitem{B} J. Bourgain, Green's function estimates for lattice Schrodinger operators and applications, Annals of mathematics studies, 158.
\vskip .05in
\bibitem{BS} J. Bourgain and W. Schlag, Anderson localization for Schrodinger operators on $\Z$ with strongly mixing potentials, Communication in Mathematical Physics, 2000 (215), 143-175.

\bibitem{BL} P. Bourgerol and J. Lacroix, Products of random matrices with applications to Schr\"odinger operators, Boston Birkhauser 1985.
\vskip .05in

\vskip .05in
\bibitem{CN} J. E. Cohen and C.M. Newman, The stability of large random matrices and their products. Ann. Probab. \textbf{12} (1984) 283-310.
\vskip .05in

\bibitem{Do} O.~ Dorokhov, Solvable model of multichannel localization, Phys. Rev. B 37 (1988) 10526-10541.
\vskip .05in


\bibitem{FP} A.~Figotin and L.~Pastur, Spectra of Random and Almost-Periodic Operators, Springer, Berlin, 1992. 
\vskip .05in

\bibitem{Forester} P. Forester,   Asymptotic of finite system Lyapunov exponents for some random matrix ensembles, J. Phys. A: Math. Theor. \textbf{48} (2015) 215205.
\vskip .05in


\bibitem{F} H. Furstenberg, Non-commuting random products,Trans. Amer. Math. Soc. \textbf{108}  (1963) 377- 428.
\vskip .1in

\bibitem{FK} H. Furstenberg and H. Kesten, Products of random matrices. Ann. Math. Statist. \textbf{31} (1960) 457-469. 
\vskip .05in



\bibitem{IN} M. Isopi and C. M. Newman, The triangle law for Lyapunov xxponents of large random matrices,  Commun. Math. Phys. 143, 591-598 (1992).
\vskip .05in

\bibitem{GM} I. Goldsheid and G. Margulis, Lyapunov indices of random matrix products,  Uspekhi Mat. Nauk 44:5 (1989), 13-60.

\vskip .05in

\bibitem{GR} Y.~Guivarch and A.~Raugi, Proprietes de contraction d'un semi-groupe de matrices inversibles, Coefficients de Liapunoff d'un produit de matrices alatoires independantes, Israel J. Math. 65 (1989), 165-196.
\vskip .05in

\bibitem{GS} M. Goldstein and W. Schlag, Holder continuity of the integrated density of states for quasi-periodic Schrodinger equations and averages of shifts of sub-harmonic functions, Annals of Mathematics, 2001 (154), 155-203.

\vskip .05in
\bibitem{NgV} H. Nguyen and  V. Vu, Normal vector of a random hyperplane, submitted.
\vskip .05in
 
\bibitem{K} V. Kargin, Products of Random Matrices: dimension and growth in norm, Annals of Applied Probability, 2010, Vol. 20, No. 3, 890-906.
\vskip .05in
\bibitem{K1} V. Kargin,  On the Largest Lyapunov xxponent for products of Gaussian matrices, Journal of Statistical Physics, 2014 (157), 70-83.
\vskip .05in

\bibitem{N} C. M. Newman, The Distribution of Lyapunov Exponents: Exact Results for Random Matrices,  Commun. Math. Phys. 103, 121-126 (1986).
\vskip .05in



\bibitem{O} V.I. Oseledec, A multiplicative ergodic theorem. Ljapunov characteristic numbers for dynamical systems. Trans. Moscow Math. Soc. \textbf{19} (1968) 197-231.
\vskip .05in

\bibitem{rv}
M.~Rudelson and R.~Vershynin, The Littlewood-Offord problem and invertibility of random matrices,  \emph{Advances in Mathematics}, \textbf{218} (2008), no. 2, 600-633.
\vskip .05in


\bibitem{rv-rec} M.~Rudelson and R.~Vershynin, Smallest singular value of a random rectangular matrix, {\it Communications on Pure and Applied Mathematics},  \textbf{62} (2009), 1707-1739.
\vskip .05in

\bibitem{SSch} C.~Sadel and H.~Schulz-Baldes, Random Lie group actions on compact manifolds: a perturbative analysis, Ann. Probab. 38 (2010), no. 6, 2224-2257. 
\vskip .05in

\bibitem{Sch} H.~Schulz-Baldes, Perturbation theory for Lyapunov exponents of an Anderson model on a strip, Geom. Funct. Anal. 14 (2004), no. 5, 1089-1117. 
\vskip .05in

\bibitem{TV1} T.~Tao and V.~Vu,  From the Littlewood-Offord problem to the circular law:  universality of the spectral distribution of
random matrices, Bulletin of the American Mathematical Society, 46 (2009), 377-396.
\vskip .05in

\bibitem{TV2}  T.~Tao and V.~Vu, Inverse Littlewood-Offord theorems and the condition number of random matrices,
Annals of Mathematics (2), 169 (2009), no 2, 595-632.
\vskip .05in

\bibitem{TVcir} T.~Tao and V.~Vu, Random matrices:  universality of ESDs and the circular law, Annals of Probability, 38 (2010), no. 5 2023-2065, with an appendix by M. Krishnapur.
\vskip .05in

\bibitem{Th} D.~Thouless, Maximum metallic resistance in thin wires, Phys. Rev. Lett. 39 (1977), 1167-1169. 
\vskip .05in

\bibitem{V-note} R.~Vershynin, Introduction to the non-asymptotic analysis of random matrices, \url{www-personal.umich.edu/~romanv/papers/non-asymptotic-rmt-plain.pdf}.
\vskip .05in

\bibitem{vershynin}
R.~Vershynin, Invertibility of symmetric random matrices, \emph{Random Structures \& Algorithms}, \textbf{44} (2014), no. 2, 135-182.


\end{thebibliography}
\end{document}